\newcounter{counter}
\newtheorem{Lemma}[counter]{Lemma}
\newtheorem{Prop}[counter]{Proposition}
\newtheorem{Thm}[counter]{Theorem}
\newtheorem{Cor}[counter]{Corollary}
\theoremstyle{definition}
\newtheorem{Def}{Definition}
\theoremstyle{remark}
\newtheorem*{remark}{Remark}
\title[Group gradings on block-triangular matrices]{Group gradings on the Lie and Jordan algebras of block-triangular matrices}
\author{Mikhail Kochetov}
\address{Department of Mathematics and Statistics, Memorial University of Newfoundland, St. John's, NL, A1C5S7, Canada.}
\thanks{The first author was supported by Discovery Grant 2018-04883 of the Natural Sciences and Engineering Research Council (NSERC) of Canada}
\email{mikhail@mun.ca}
\author{Felipe Yukihide Yasumura}
\address{
	Department of Mathematics, Universidade Estadual de Maring\'a, Maring\'a, PR, Brazil.}
\thanks{The second author was supported by S\~ao Paulo Research Foundation (Fapesp), grant 2017/11.018-9}
\email{felipeyukihide@gmail.com}
\subjclass[2010]{Primary 17B70, secondary 16W50, 17C99.}
\keywords{Graded algebra, block-triangular matrices, classification of gradings}
\begin{document}

\begin{abstract}
We classify up to isomorphism all gradings by an arbitrary group $G$ on the Lie algebras of zero-trace upper block-triangular matrices over an algebraically closed field of characteristic $0$. It turns out that the support of such a grading always generates an abelian subgroup of $G$.

Assuming that $G$ is abelian, our technique also works to obtain the classification of $G$-gradings on the upper block-triangular matrices as an associative algebra, over any algebraically closed field. These gradings were originally described by A. Valenti and M. Zaicev in 2012 (assuming characteristic $0$ and $G$ finite abelian) and classified up to isomorphism by A. Borges et al. in 2018.

Finally, still assuming that $G$ is abelian, we classify $G$-gradings on the upper block-triangular matrices as a Jordan algebra, over an algebraically closed field of characteristic $0$. It turns out that, under these assumptions, the Jordan case is equivalent to the Lie case.
\end{abstract}
\maketitle

\section{Introduction}

The algebras of upper block-triangular matrices are an essential example of non-simple algebras. Moreover, viewed as Lie algebras, they are an example of the so-called parabolic subalgebras of simple Lie algebras. Group gradings on the upper triangular matrices (a Borel subalgebra) were investigated in \cite{pkfy2017}.

In this paper, we classify gradings by any abelian group $G$ on the upper block-triangular matrices, viewed as an associative, Lie or Jordan algebra, over an algebra\-ical\-ly closed field $\mathbb{F}$, which is assumed to have  characteristic $0$ in the Lie and Jordan cases. The basic idea is to show that every $G$-grading on the upper block-triangular matrices (of trace zero in the Lie case) can be extended uniquely to a grading on the full matrix algebra. However, not every $G$-grading on the full matrix algebra restricts to a grading on the upper block-triangular matrices, which leads us to consider an additional $\mathbb{Z}$-grading. In the associative case, this approach to the classification of gradings is different from the one of A. Valenti and M. Zaicev, who investigated upper triangular matrices in \cite{VaZa2007} and upper block-triangular matrices in \cite{VaZa2012} (under more restrictive assumptions than here). The Lie and Jordan cases are new. It turns out that the automorphism group of the upper block-triangular matrices, viewed as a Jordan algebra, is the same as the automorphism group of the upper block-triangular matrices of trace zero, viewed as a Lie algebra. Hence, the classifications of abelian group gradings in both cases are equivalent. The Jordan algebra of upper triangular matrices was investigated in \cite{pkfy2017Jord}.

Moreover, we prove that, in the Lie case, there is no loss of generality in assuming $G$ abelian, because the support of any group grading on the zero-trace upper block-triangular matrices generates an abelian subgroup.

The paper is structured as follows. After a brief review of terminology and relevant results on gradings in Section \ref{prelim}, we obtain a classification of gradings by abelian groups on the associative algebra of upper block-triangular matrices in Section \ref{assoc_case} (see Theorem \ref{th:main_assoc} and Corollary \ref{cor:main assoc}). In Section \ref{Lie_case}, we classify gradings on the Lie algebra of upper block-triangular matrices (Theorem \ref{th:main_Lie} and Corollary \ref{cor:main_Lie}). The center of this algebra is spanned by the identity matrix, and we actually classify gradings on the quotient modulo the center. The effect that this transition has on the classification of gradings is discussed in Section \ref{practical_iso}, the main results of which (Theorem \ref{th:main_practical} and Corollary \ref{cor:main_practical}) are quite general and may be of independent interest. Our approach to classification in Section \ref{Lie_case} follows the same lines as in the associative case. However, the Lie case is substantially more difficult, and some technical aspect is postponed until Section \ref{commut_supp}, where we also prove the commutativity of support (Theorem \ref{supp_commutativity}). Finally, the Jordan case is briefly discussed in Section \ref{Jord_case}. 

\section{Preliminaries on group gradings}\label{prelim}

Let $A$ be an arbitrary algebra over a field $\mathbb{F}$ and let $G$ be a group. We say that $A$ is \emph{$G$-graded} if $A$ is endowed with a fixed vector space decomposition,
\[
\Gamma:A=\bigoplus_{g\in G}A_g,
\]
such that $A_gA_h\subset A_{gh}$, for all $g,h\in G$. The subspace $A_g$ is called the \emph{homogeneous component of degree $g$}, and the non-zero elements $x\in A_g$ are said to be homogeneous of degree $g$. We write $\deg x=g$ for these elements. The \emph{support} of $A$ (or of $\Gamma$) is the set $\mathrm{Supp}\,A=\{g\in G\mid A_g\ne 0\}$.

A subspace $I\subset A$ is called \emph{graded} if $I=\bigoplus_{g\in G}I\cap A_g$. If $I$ is a \emph{graded ideal} {(that is, it is simultaneously an ideal and a graded subspace)}, then the quotient algebra $A/I$ inherits a natural $G$-grading. $A$ is said to be \emph{graded-simple} if $A^2\ne 0$ and $A$ does not have nonzero proper graded ideals.

If $A$ is an associative or Lie algebra, then a \emph{graded $A$-module} is an $A$-module $V$ with a fixed vector space decomposition $V=\bigoplus_{g\in G}V_g$ such that $A_g\cdot V_h\subset V_{gh}$, for all $g,h\in G$. A nonzero graded $A$-module is said to be \emph{graded-simple} if it does not have nonzero proper graded submodules. {(A \emph{graded submodule} is a submodule that is also a graded subspace.)}

Let $H$ be any group and let $\alpha:G\to H$ be a homomorphism of groups. Then $\alpha$ induces a $H$-grading, say $A=\bigoplus_{h\in H} A_h'$, on the $G$-graded algebra $A$ if we define
\[
A_h'=\bigoplus_{g\in\alpha^{-1}(h)}A_g.
\]
The $H$-grading is called the coarsening of $\Gamma$ induced by the homomorphism $\alpha$.

Let $B=\bigoplus_{g\in G}B_g$ be another $G$-graded algebra. A map $f:A\to B$ is called a \emph{homomorphism of $G$-graded algebras} if $f$ is a homomorphism of algebras and $f(A_g)\subset B_g$, for all $g\in G$. If, moreover, $f$ is an isomorphism, we call $f$ a \emph{$G$-graded isomorphism} (or an isomorphism of graded algebras), and we say that $A$ and $B$ are \emph{$G$-graded isomorphic} (or isomorphic as graded algebras). Two $G$-gradings, $\Gamma$ and $\Gamma'$, on the same algebra $A$ are \emph{isomorphic} if $(A,\Gamma)$ and $(A,\Gamma')$ are isomorphic as graded algebras.

Let $T$ be a finite abelian group and let $\sigma:T\times T\to\mathbb{F}^\times$ be a map, where $R^\times$ denotes the group of invertible elements in a ring $R$. We say that $\sigma$ is a \emph{2-cocycle} if
\[
\sigma(u,v)\sigma(uv,w)=\sigma(u,vw)\sigma(v,w),\quad\forall u,v,w\in T.
\]
The twisted group algebra $\mathbb{F}^\sigma T$ is constructed as follows: it has $\{X_t\mid t\in T\}$ as an $\mathbb{F}$-vector space basis, and multiplication is given by $X_uX_v=\sigma(u,v)X_{uv}$. It is readily seen that $\mathbb{F}^\sigma T$ is an associative algebra if and only if $\sigma$ is a 2-cocycle, which we will assume from now on. Note that $A=\mathbb{F}^\sigma T$ has a natural $T$-grading, where each homogeneous component has dimension 1, namely $A_t=\mathbb{F}X_t$, for each $t\in T$. This is an example of the so-called \emph{division grading}. A graded algebra $D$ is a \emph{graded division algebra} (or $D$ has a division grading) if every non-zero homogeneous element of $D$ is invertible.

Define $\beta:T\times T\to\mathbb{F}^\times$ by $\beta(u,v)=\sigma(u,v)\sigma(v,u)^{-1}$. Then we have 
\[
X_uX_v=\beta(u,v)X_vX_u,\quad\forall u,v\in T,
\]
and $\beta$ is an alternating bicharacter of $T$, that is, $\beta$ is multiplicative in each variable and $\beta(u,u)=1$ for all $u\in T$. If $\mathrm{char}\,\mathbb{F}$ does not divide $|T|$, then $\mathbb{F}^\sigma T$ is semisimple as an (ungraded) algebra. It follows that $\mathbb{F}^\sigma T$ is a simple algebra if and only if $\beta$ is non-degenerate. In particular, the non-degeneracy of $\beta$ implies that $|T|=\dim\mathbb{F}^\sigma T$ is a perfect square. It is known that, if $\mathbb{F}$ is algebraically closed, the isomorphism classes of matrix algebras endowed with a division grading by an abelian group $G$ are in bijection with the pairs $(T,\beta)$, where $T$ is a finite subgroup of $G$ (namely, the support of the grading) and $\beta:T\times T\to\mathbb{F}^\times$ is a non-degenerate alternating bicharacter (see e.g. \cite[Theorem 2.15]{EK2013}).

For each $n$-tuple $(g_1,\ldots,g_n)$ of elements of $G$, we can define a $G$-grading on $M_n=M_n(\mathbb{F})$ by declaring that the matrix unit $E_{ij}$ is homogeneous of degree $g_ig_j^{-1}$, for all $i$ and $j$. A grading on $M_n$ is called \emph{elementary} if it is isomorphic to one of this form. For any $g\in G$ and any permutation $\sigma\in S_n$, the $n$-tuple $(g_{\sigma(1)}g,\ldots,g_{\sigma(n)}g)$ defines an isomorphic elementary $G$-grading. Hence, an isomorphism class of elementary gradings is described by a function $\kappa:G\to\mathbb{Z}_{\ge0}$, where $g\in G$ appears exactly $\kappa(g)$ times in the $n$-tuple. Moreover, $G$ acts on these functions by translation: given $g\in G$, one defines the function $g\kappa:G\to\mathbb{Z}_{\ge0}$ by $g\kappa(x)=\kappa(g^{-1}x)$. For any $\kappa:G\to\mathbb{Z}_{\ge0}$ with finite support, we denote $|\kappa|:=\sum_{x\in G}\kappa(x)$.

If $\mathbb{F}$ is algebraically closed, then, for a fixed abelian group $G$, the isomorphism classes of $G$-gradings on $M_n$ are parametrized by the triples $(T,\beta,\kappa)$, where $T$ is a finite subgroup of $G$, $\beta:T\times T\to\mathbb{F}^\times$ is a non-degenerate alternating bicharacter, and $\kappa:G/T\to\mathbb{Z}_{\ge0}$ is a function with finite support such that $|\kappa|\sqrt{|T|}=n$. A grading in the isomorphism class corresponding to $(T,\beta,\kappa)$ can be explicitly constructed by making the following two choices:
(i) a $k$-tuple $\gamma=(g_1,\ldots,g_k)$ of elements in $G$ such that each element $x\in G/T$ occurs in the $k$-tuple $(g_1T,\ldots,g_kT)$
exactly $\kappa(x)$ times (hence $k=|\kappa|$) and (ii) a division grading on $M_\ell$ with support $T$ and bicharacter $\beta$ (hence  $|T|=\ell^2$). Since $n=k\ell$, we identify $M_n$ with $M_k\otimes M_\ell$ via Kronecker product and define a $G$-grading on $M_n$ by declaring the matrix $E_{ij}\otimes d$, with $1\le i,j\le k$, and $d$ a nonzero homogeneous element of $M_\ell$, to be of degree $g_i \deg(d) g_j^{-1}$.

Finally, two triples $(T,\beta,\kappa)$ and $(T',\beta',\kappa')$ determine the same isomorphism class if and only if $T'=T$, $\beta'=\beta$, and there exists $g\in G$ such that $\kappa'=g\kappa$ (see e.g. \cite[Theorem 2.27]{EK2013}).

\section{Associative case}\label{assoc_case}

Let $\mathbb{F}$ be {a field} and let $V$ be a finite-dimensional $\mathbb{F}$-vector space. Denote by $\mathscr{F}$ a flag of subspaces in $V$, that is
\begin{align*}
0=V_0\subsetneq V_1\subsetneq\ldots\subsetneq V_s=V.
\end{align*}
Let $n=\dim V$ and $n_i=\dim V_i/V_{i-1}$, for $i=1,2,\ldots,s$. We denote by $U(\mathscr{F})$ the set of endomorphisms of $V$ preserving the flag $\mathscr{F}$, which coincides with the upper block-triangular matrices $UT(n_1,\ldots,n_s)$ after a choice of basis of $V$ respecting the flag $\mathscr{F}$. We fix such a basis and identify $U(\mathscr{F})=UT(n_1,\ldots,n_s)\subset M_n$.

For each $m\in\mathbb{Z}$, if $|m|<s$, let $J_m\subset M_n$ denote the $m$-th block-diagonal of matrices. Formally,
\begin{align*}
J_m=\mathrm{Span}\{&E_{ij}\in M_n\mid\text{there exists $q\in\mathbb{Z}_{\ge0}$ such that}\\
&n_1+\dots+n_q<i\le n_1+\dots+n_{q+1},\mathrm{ and }\\
&n_1+\dots+n_{q+m}<j\le n_1+\dots+n_{q+m+1}\}.
\end{align*}
Setting $J_m=0$ for $|m|\ge s$, we obtain a $\mathbb{Z}$-grading $M_n=\bigoplus_{m\in\mathbb{Z}}J_m$, which is the elementary grading defined by the $n$-tuple 
\[
(\underbrace{-1,\ldots,-1}_{n_1 \text{ times}},\underbrace{-2,\ldots,-2}_{n_2\text{ times}},\ldots,\underbrace{-s,\ldots,-s}_{n_s\text{ times}}).
\] 
This grading restricts to $U(\mathscr{F})$, and we will refer to the resulting grading $U(\mathscr{F})=\bigoplus_{m\ge 0}J_m$ as the \emph{natural $\mathbb{Z}$-grading} of $U(\mathscr{F})$. The associated filtration consists of the powers of the Jacobson radical $J$ of $U(\mathscr{F})$, that is, {$\bigoplus_{i\ge m}J_i=J^m$} for all $m\ge 0$.

Let $G$ be any abelian group and denote $G^\#=\mathbb{Z}\times G$. We identify $G$ with the subset $\{0\}\times G\subset G^\#$ and $\mathbb{Z}$ with $\mathbb{Z}\times\{1_G\}\subset G^\#$. We want to find a relation between $G^\#$-gradings on $M_n$ and $G$-gradings on $U(\mathscr{F})$.

First, we note that, given any $G^\#$-grading on $M_n$, we obtain a $\mathbb{Z}$-grading on $M_n$ if we consider the coarsening induced by the projection onto the first component $G^\#\to\mathbb{Z}$.

\begin{Def}
A $G^\#$-grading on $M_n$ is said to be \emph{admissible} if $U(\mathscr{F})$ with its natural $\mathbb{Z}$-grading is a graded subalgebra of $M_n$, where $M_n$ is viewed as a $\mathbb{Z}$-graded algebra induced by the projection $G^\#\to\mathbb{Z}$.
We call an isomorphism class of $G^\#$-grading on $M_n$ \emph{admissible} if it contains an admissible grading.
\end{Def}

\begin{Lemma}\label{ind_admissible}
For any admissible $G^\#$-grading on $M_n$, {the $\mathbb{Z}$-grading induced by the projection $G^\#\to\mathbb{Z}$} has $J_m$ as its homogeneous component of degree $m$.
\end{Lemma}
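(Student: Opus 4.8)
The plan is to show directly that the homogeneous component of degree $m$ of the induced $\mathbb{Z}$-grading, which I will denote $(M_n)_m=\bigoplus_{g\in G}(M_n)_{(m,g)}$, coincides with $J_m$. The essential tool is the decomposition of $M_n$ into blocks by means of the idempotents $\epsilon_1,\ldots,\epsilon_s$, where $\epsilon_p$ is the identity of the $p$-th diagonal block (so $\epsilon_p=\sum_k E_{kk}$ with $k$ ranging over the $p$-th block, and $\epsilon_1+\cdots+\epsilon_s=I$). Writing $M_n=\bigoplus_{p,q}\epsilon_p M_n\epsilon_q$, one has $J_m=\bigoplus_{q-p=m}\epsilon_p M_n\epsilon_q$, so it suffices to prove that each block $\epsilon_p M_n\epsilon_q$ is contained in $(M_n)_{q-p}$.

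First I would observe that every $\epsilon_p$ lies in $J_0\subseteq U(\mathscr{F})$ and is homogeneous of natural degree $0$; by admissibility (that is, since the inclusion of $U(\mathscr{F})$ with its natural $\mathbb{Z}$-grading into $M_n$ is a graded map) we get $J_0\subseteq(M_n)_0$, so each $\epsilon_p\in(M_n)_0$. Consequently every block $\epsilon_p M_n\epsilon_q$ is a graded subspace of the induced $\mathbb{Z}$-grading. For $q\ge p$ the block lies in $U(\mathscr{F})$, more precisely in $J_{q-p}$, and admissibility again gives $J_{q-p}\subseteq(M_n)_{q-p}$; hence the diagonal and strictly-upper blocks are homogeneous of degree $q-p$.

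The only real obstacle is the strictly-lower blocks $\epsilon_p M_n\epsilon_q$ with $p>q$, which are not contained in $U(\mathscr{F})$ and therefore are not directly controlled by admissibility. Here I would argue by multiplication into a diagonal block: fix a nonzero homogeneous $x\in\epsilon_p M_n\epsilon_q$, say of degree $d$. The multiplication pairing $\epsilon_p M_n\epsilon_q\times\epsilon_q M_n\epsilon_p\to\epsilon_p M_n\epsilon_p$ is non-degenerate in the first variable (if $xy=0$ for all matrix units $y$ in block $(q,p)$, then every column of $x$ vanishes, so $x=0$), whence there exists $y\in\epsilon_q M_n\epsilon_p$ with $xy\ne 0$. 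By the previous paragraph the whole upper block $\epsilon_q M_n\epsilon_p$ is homogeneous of degree $p-q$, so $xy$ is homogeneous of degree $d+(p-q)$; but $xy\in\epsilon_p M_n\epsilon_p\subseteq J_0\subseteq(M_n)_0$ and is nonzero, forcing $d+(p-q)=0$, i.e. $d=q-p$. Thus $\epsilon_p M_n\epsilon_q\subseteq(M_n)_{q-p}$ for the lower blocks as well.

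Finally I would assemble the conclusion: since $\epsilon_p(M_n)_m\epsilon_q\subseteq(M_n)_m\cap(M_n)_{q-p}$ vanishes unless $m=q-p$, we obtain $(M_n)_m=\bigoplus_{q-p=m}\epsilon_p M_n\epsilon_q=J_m$, as required. I expect the lower-block step to be the crux, since it is precisely where the one-sided (upper-triangular) nature of the hypothesis must be converted into two-sided information through the multiplicative structure; everything else is a direct unwinding of the definition of admissibility together with the orthogonal-idempotent (Peirce) block decomposition. Note that this argument does not even require the a priori remark that the induced $\mathbb{Z}$-grading is elementary (which would follow from $\mathbb{Z}$ being torsion-free), so I would keep the proof self-contained.
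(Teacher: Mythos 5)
Your proof is correct and takes essentially the same approach as the paper's: both use the degree-zero idempotents coming from $J_0$ to obtain a graded Peirce (block) decomposition, read off the upper blocks' degrees from admissibility, and pin down the lower blocks by multiplying into the degree-zero diagonal. The paper does this at the level of matrix units via $E_{ii}=E_{ij}E_{ji}$, while you work with the block idempotents and the non-degenerate pairing $\epsilon_p M_n\epsilon_q\times\epsilon_q M_n\epsilon_p\to\epsilon_p M_n\epsilon_p$, which is the same mechanism.
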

\begin{proof}
From the definition of admissible grading, we know that, for any $m\ge 0$, $J_m$ is contained in the homogeneous component of degree $m$ in the induced $\mathbb{Z}$-grading on $M_n$. In particular, each $E_{ii}$ is homogeneous of degree $0$. It follows that $E_{ii}M_nE_{jj}=\mathbb{F}E_{ij}$ is a graded subspace. Hence, all $E_{ij}$ are homogeneous. Moreover, if $E_{ij}\in J_{-m}$, then $E_{ji}\in J_m$ has degree $m$, so $E_{ij}$ must have degree $-m$, since $E_{ii}=E_{ij}E_{ji}$. The result follows.
\end{proof}

Recall from Section \ref{prelim} that, {over an algebraically closed field,} any isomorphism class of $G^\#$-gradings on $M_n$ is given by a finite subgroup $T$ of $G^\#$ (hence, in fact, $T\subset G$), a non-degenerate bicharacter $\beta:T\times T\to\mathbb{F}^\times$ and a function $\kappa:G^\#/T\to\mathbb{Z}_{\ge0}$ with finite support, where $n=k\ell$, $k=|\kappa|$ and $\ell=\sqrt{|T|}$. 

\begin{Lemma}\label{lem1}
Consider a $G^\#$-grading on $M_n$ with parameters $(T,\beta,\kappa)$ and let 
\[
\gamma=\big((a_1,g_1),(a_2,g_2),\ldots,(a_k,g_k)\big)
\]
be a $k$-tuple of elements of $G^\#$ associated to $\kappa$.
Then the $\mathbb{Z}$-grading on $M_n$ induced by the projection $G^\#\to\mathbb{Z}$ is an elementary grading
defined by the $n$-tuple
\[
(\underbrace{a_1,\ldots,a_1}_{\ell\text{ times}},\underbrace{a_2,\ldots,a_2}_{\ell\text{ times}},\ldots,\underbrace{a_k,\ldots,a_k}_{\ell\text{ times}}).
\]
\end{Lemma}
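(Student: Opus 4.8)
The plan is to work directly with the explicit construction recalled in Section~\ref{prelim}. Realizing $M_n = M_k \otimes M_\ell$ via Kronecker product, the $G^\#$-grading with parameters $(T,\beta,\kappa)$ and chosen $k$-tuple $\gamma$ assigns to $E_{ij} \otimes d$ (with $1 \le i,j \le k$ and $d$ a nonzero homogeneous element of the fixed division grading on $M_\ell$) the degree $(a_i,g_i)\deg(d)(a_j,g_j)^{-1}$. The crucial preliminary observation is that $T \subset \{0\}\times G$: indeed, $T$ is a finite subgroup of $G^\# = \mathbb{Z}\times G$, and since $\mathbb{Z}$ is torsion-free, the image of $T$ under the projection $\pi\colon G^\#\to\mathbb{Z}$ is a finite subgroup of $\mathbb{Z}$, hence trivial. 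Consequently $\deg(d)\in T$ has trivial $\mathbb{Z}$-component for every homogeneous $d\in M_\ell$.

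First I would compute the $\mathbb{Z}$-degree of each homogeneous basis element. Applying $\pi$ to the formula above gives
\[
\pi\big((a_i,g_i)\deg(d)(a_j,g_j)^{-1}\big) = a_i + 0 - a_j = a_i - a_j,
\]
which is \emph{independent of $d$}. Next I would match indices through the Kronecker identification: writing $E_{ij}\otimes E_{rs}=E_{pq}$ with $p=(i-1)\ell+r$ and $q=(j-1)\ell+s$, the index $p$ lies in the $i$-th block of length $\ell$ and $q$ in the $j$-th block. Hence the elementary grading defined by the $n$-tuple in the statement (which carries the value $a_i$ throughout the $i$-th block) assigns to $E_{pq}$ exactly the $\mathbb{Z}$-degree $a_i-a_j$, matching the value just computed.

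The last step is to conclude that the two $\mathbb{Z}$-gradings coincide, not merely agree on matrix units. Fixing a homogeneous basis $\{d_t \mid t\in T\}$ of $M_\ell$, the set $\{E_{ij}\otimes d_t\}$ is a basis of $M_n$ adapted to the induced $\mathbb{Z}$-grading. Since a general $d_t$ is not a matrix unit, I would expand it as $d_t=\sum_{r,s}c_{rs}E_{rs}$ and observe that \emph{every} matrix unit $E_{pq}$ occurring in $E_{ij}\otimes d_t$ has the \emph{same} elementary $\mathbb{Z}$-degree $a_i-a_j$, because this degree depends only on the blocks containing $p$ and $q$. Therefore each induced homogeneous component of degree $m$ is contained in the corresponding elementary component of degree $m$; as both are gradings of the same space $M_n$, a dimension count forces equality componentwise. (Equivalently, one may phrase the whole argument structurally: the induced $\mathbb{Z}$-grading is the tensor product of the elementary $\mathbb{Z}$-grading on $M_k$ defined by $(a_1,\ldots,a_k)$ with the trivial grading on $M_\ell$, which by inspection is the asserted elementary grading.)

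I expect the only genuine subtlety to be this final matching: the homogeneous elements of the division factor $M_\ell$ are generally full-rank matrices, that is, arbitrary linear combinations of matrix units rather than single matrix units, so one must verify that expanding them does not spread weight across different $\mathbb{Z}$-degrees. This is precisely where the observation $T\subset\{0\}\times G$ does the work, collapsing the entire $M_\ell$-contribution to $\mathbb{Z}$-degree $0$; the rest is routine bookkeeping of the Kronecker indices.
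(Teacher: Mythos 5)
Your proposal is correct and follows essentially the same route as the paper: identify $M_n\simeq M_k\otimes M_\ell$, observe that $T$ lies in the kernel of the projection $G^\#\to\mathbb{Z}$ (the paper notes $T\subset G$ just before the lemma, for the same torsion reason you give), so the division factor contributes trivially to the $\mathbb{Z}$-degree and only the elementary part of $\gamma$ survives. The paper leaves the Kronecker-index bookkeeping implicit, which you carry out explicitly; this is a harmless elaboration, not a different argument.
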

\begin{proof}
We have a $G^\#$-graded isomorphism $M_n\simeq M_k\otimes M_\ell$, where $M_k$ has an elementary grading defined by $\gamma$ and $M_\ell$ has a division grading with support $T$. Since $T$ is contained in the kernel of the projection $G^\#\to\mathbb{Z}$, the factor $M_\ell$ will get the trivial induced $\mathbb{Z}$-grading. The result follows.
\end{proof}

By the previous two lemmas, the isomorphism class of $G^\#$-gradings on $M_n$ with parameters $(T,\beta,\kappa)$ is admissible if and only if $\gamma$ has the following form, up to permutation and translation by an integer:
\[
\gamma=\big((-1,g_{11}),\ldots,(-1,g_{1k_1}),(-2,g_{21}),\ldots,(-2,g_{2k_2})\ldots,(-s,g_{s1}),\ldots,(-s,g_{sk_s})\big),
\]
where $n_i=k_i\ell$ for all $i=1,2,\ldots,s$. Equivalently, this condition can be restated directly in terms of $\kappa$, regarded as a function $\mathbb{Z}\times G/T\to\mathbb{Z}_{\ge 0}$, as follows: there exist $a\in\mathbb{Z}$ and $\kappa_1,\ldots,\kappa_s:G/T\to\mathbb{Z}_{\ge0}$ with $|\kappa_i|\sqrt{|T|}=n_i$ such that 
\[
\kappa(a-i,x)=\kappa_i(x),\quad \forall i\in\{1,2,\ldots,s\},\, x\in G/T,
\]
and $\kappa(a-i,x)=0$ if $i\notin\{1,2,\ldots,s\}$.

By Lemma \ref{ind_admissible}, every admissible $G^\#$-grading 
$
M_n=\bigoplus_{(m,g)\in G^\#}A_{(m,g)}
$
restricts to a $G^\#$-grading on $U(\mathscr{F})$, hence the projection onto the second component $G^\#\to G$ induces a $G$-grading on $U(\mathscr{F})$, namely, $U(\mathscr{F})=\bigoplus_{g\in G}B_g$ where $B_g=\bigoplus_{m\ge 0}A_{(m,g)}$.

\begin{Lemma}\label{lem2}
If two admissible $G^\#$-gradings on $M_n$ are isomorphic then they induce isomorphic $G$-gradings on $U(\mathscr{F})$.
\end{Lemma}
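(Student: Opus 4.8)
The plan is to take a $G^\#$-graded algebra isomorphism $\psi\colon(M_n,\Gamma)\to(M_n,\Gamma')$ witnessing that the two admissible gradings $\Gamma$ and $\Gamma'$ are isomorphic, and to show that $\psi$ restricts to a $G$-graded isomorphism between the induced gradings on $U(\mathscr{F})$. Writing $\Gamma$ as $M_n=\bigoplus_{(m,g)\in G^\#}A_{(m,g)}$ and $\Gamma'$ as $M_n=\bigoplus_{(m,g)\in G^\#}A'_{(m,g)}$, the isomorphism satisfies $\psi(A_{(m,g)})=A'_{(m,g)}$ for all $(m,g)\in G^\#$. The first observation I would record is that $\psi$ automatically respects the $\mathbb{Z}$-coarsening induced by the projection $G^\#\to\mathbb{Z}$: summing the equalities above over $g\in G$ gives $\psi\bigl(\bigoplus_{g\in G}A_{(m,g)}\bigr)=\bigoplus_{g\in G}A'_{(m,g)}$, so $\psi$ carries the degree-$m$ component of the $\mathbb{Z}$-grading of $\Gamma$ onto the degree-$m$ component of the $\mathbb{Z}$-grading of $\Gamma'$.

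Next I would invoke Lemma~\ref{ind_admissible}. Since both $\Gamma$ and $\Gamma'$ are admissible, the $m$-th homogeneous component of each of their induced $\mathbb{Z}$-gradings is precisely $J_m$. Combined with the previous step, this forces $\psi(J_m)=J_m$ for every $m\in\mathbb{Z}$; in particular $\psi$ preserves $U(\mathscr{F})=\bigoplus_{m\ge 0}J_m$, and hence restricts to an algebra automorphism of $U(\mathscr{F})$. This is the heart of the argument: the canonicity of the $\mathbb{Z}$-grading pins the subspace $U(\mathscr{F})$ down on the nose for both gradings, so that $\psi$ cannot move it.

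Finally I would verify that $\psi|_{U(\mathscr{F})}$ is $G$-graded for the two induced gradings $B_g=\bigoplus_{m\ge0}A_{(m,g)}$ and $B'_g=\bigoplus_{m\ge0}A'_{(m,g)}$. Because $\psi(A_{(m,g)})=A'_{(m,g)}$ and, for $m\ge0$, both of these components lie inside $U(\mathscr{F})$, summing over $m\ge 0$ yields $\psi(B_g)=B'_g$ for all $g\in G$, which is exactly the required conclusion. Equivalently, one notes that $B_g$ is the coarsening along $G^\#\to G$ of the $G^\#$-grading that $\Gamma$ restricts to on $U(\mathscr{F})$, so a restricted $G^\#$-graded isomorphism is a fortiori $G$-graded.

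I do not expect a serious obstacle here; essentially all the content is supplied by Lemma~\ref{ind_admissible}. The only points demanding a moment of care are interpreting \emph{isomorphic admissible gradings} as the existence of a $G^\#$-graded self-isomorphism of $M_n$ (in the sense of the definition of isomorphic gradings on a fixed algebra), and checking that the restriction of $\psi$ to $U(\mathscr{F})$ is still bijective and multiplicative, which is immediate once $\psi(U(\mathscr{F}))=U(\mathscr{F})$ has been established.
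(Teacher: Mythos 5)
Your argument is correct and is essentially the paper's own proof, just written out in more detail: the paper also observes that the graded isomorphism $\psi$ preserves $G^\#$-degree, hence (via Lemma~\ref{ind_admissible}) fixes $U(\mathscr{F})$ setwise and restricts to an isomorphism of the induced $G$-gradings. No issues.
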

\begin{proof}
Assume that $\psi$ is an isomorphism between two admissible $G^\#$-gradings on $M_n$. Since $\psi$ preserves degree in $G^\#$, it fixes $U(\mathscr{F})$ as a set and therefore restricts to an automorphism of $U(\mathscr{F})$. This restriction is an isomorphism between the induced $G$-gradings on $U(\mathscr{F})$.
\end{proof}

Now we want to go back from $G$-gradings on $U(\mathscr{F})$ to $G^\#$-gradings on $M_n$. First note that the $G$-gradings on $U(\mathscr{F})$ obtained as above are not arbitrary, but satisfy the following:

\begin{Def}
We say that a $G$-grading on $U(\mathscr{F})$ is \emph{in canonical form} if, for each $m\in\{0,1,\ldots,s-1\}$, the subspace $J_m$ is $G$-graded.
\end{Def}

In other words, a $G$-grading $\Gamma:U(\mathscr{F})=\bigoplus_{g\in G}B_g$ is in canonical form if and only if it is compatible with the natural $\mathbb{Z}$-grading on $U(\mathscr{F})$. If this is the case, we obtain a $G^\#$-grading on $U(\mathscr{F})$ by taking $J_m\cap B_g$ as the homogeneous component of degree $(m,g)$. We want to show that this $G^\#$-grading uniquely extends to $M_n$. 

To this end, let us look more closely at the automorphism group of $U(\mathscr{F})$. 
We denote by $\mathrm{Int}(x)$ the inner automorphism $y\mapsto xyx^{-1}$ determined by an invertible element $x$.

\begin{Lemma}\label{aut}
	$\mathrm{Aut}(U(\mathscr{F}))\simeq\left\{\psi\in\mathrm{Aut}(M_n)\mid\psi(U(\mathscr{F}))=U(\mathscr{F})\right\}$.
\end{Lemma}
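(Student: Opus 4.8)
The plan is to realize the claimed isomorphism through the restriction map. Every $\psi$ on the right-hand side preserves $U(\mathscr{F})$ by definition, so $\psi\mapsto\psi|_{U(\mathscr{F})}$ is a well-defined group homomorphism $\rho$ into $\mathrm{Aut}(U(\mathscr{F}))$, and it suffices to prove that $\rho$ is injective and surjective. For injectivity, recall that every automorphism of the central simple algebra $M_n$ is inner, so $\psi=\mathrm{Int}(x)$ for some invertible $x$; if $\psi|_{U(\mathscr{F})}=\mathrm{id}$, then $x$ centralizes $U(\mathscr{F})$. Since $U(\mathscr{F})$ contains all $E_{ii}$, such an $x$ must be diagonal, and since it also centralizes the matrix units $E_{ij}$ lying in $U(\mathscr{F})$ (which link every pair of indices through the diagonal and upper blocks), all of its diagonal entries coincide; hence $x\in\mathbb{F}I$ and $\psi=\mathrm{id}$.

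The substance of the lemma is surjectivity, which I would recast as the statement that every $\phi\in\mathrm{Aut}(U(\mathscr{F}))$ has the form $\mathrm{Int}(w)|_{U(\mathscr{F})}$ for some $w$ invertible in $U(\mathscr{F})$; such an $\mathrm{Int}(w)$ then lies in the right-hand side and restricts to $\phi$. I would construct $w$ by successively composing $\phi$ with inner automorphisms. First, $\phi$ preserves the Jacobson radical $J=\bigoplus_{m\ge1}J_m$ and its powers, so it induces an automorphism of the semisimple quotient. The block-diagonal subalgebra $S:=J_0\cong M_{n_1}\times\cdots\times M_{n_s}$ is a Wedderburn complement of $J$ in $U(\mathscr{F})$, and so is $\phi(S)$; by the Wedderburn--Malcev theorem these are conjugate by some $\mathrm{Int}(a)$ with $a\in 1+J$. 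Replacing $\phi$ by $\mathrm{Int}(a)\circ\phi$, I may assume $\phi(S)=S$.

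Next I would pin down $\phi|_S$. It permutes the simple factors $M_{n_i}$, say by a permutation $\pi$ of $\{1,\ldots,s\}$. Writing $e_i$ for the identity of the $i$-th diagonal block, one has $e_iU(\mathscr{F})e_j\ne0$ if and only if $i\le j$; applying $\phi$ and using $\phi(e_i)=e_{\pi(i)}$ together with $\phi(U(\mathscr{F}))=U(\mathscr{F})$ forces the equivalence $i\le j\Leftrightarrow\pi(i)\le\pi(j)$, so $\pi$ is order-preserving and hence the identity. Thus $\phi$ fixes each $e_i$ and restricts to an automorphism of each $M_{n_i}$, which is inner, say $\mathrm{Int}(u_i)$. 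With $u:=\sum_i u_i\in S$, replacing $\phi$ by $\mathrm{Int}(u)^{-1}\circ\phi$ lets me further assume $\phi|_S=\mathrm{id}$.

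Finally, with $\phi$ fixing $S$ pointwise, $\phi$ is an $S$-bimodule automorphism of $U(\mathscr{F})$ and preserves each block $U_{ij}:=e_iU(\mathscr{F})e_j$. Since $U_{ij}$ is a simple $(M_{n_i},M_{n_j})$-bimodule, $\phi$ acts on it as a scalar $\lambda_{ij}\in\mathbb{F}^\times$, and compatibility with the products $U_{ij}U_{jk}\subseteq U_{ik}$ gives $\lambda_{ik}=\lambda_{ij}\lambda_{jk}$; solving this recursion yields $\lambda_{ij}=c_ic_j^{-1}$ for suitable scalars $c_i$. Setting $d:=\sum_i c_ie_i$, one checks $\phi=\mathrm{Int}(d)|_{U(\mathscr{F})}$, and unwinding the two preliminary reductions expresses the original $\phi$ as $\mathrm{Int}(w)|_{U(\mathscr{F})}$ with $w\in U(\mathscr{F})$ invertible. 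I expect the main obstacle to be the surjectivity argument, and within it the two normalization steps: invoking Wedderburn--Malcev to adjust the semisimple part, and verifying that the induced permutation of the blocks must be trivial — this last point is exactly where the asymmetry of the block-triangular (as opposed to block-diagonal) structure is essential.
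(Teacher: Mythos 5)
Your proof is correct, but it takes a different route from the paper on the one point that carries all the weight. The paper's proof is short because it \emph{cites} the description $\mathrm{Aut}(U(\mathscr{F}))=\{\mathrm{Int}(x)\mid x\in U(\mathscr{F})^\times\}$ from Cheung's thesis (Corollary 5.4.10 there) and only has to add the observation that any $y\in M_n^\times$ stabilizing $U(\mathscr{F})$ agrees on $U(\mathscr{F})$ with some $\mathrm{Int}(x)$, $x\in U(\mathscr{F})^\times$, whence $yx^{-1}$ centralizes $U(\mathscr{F})$ and is therefore scalar. Your surjectivity argument is, in effect, a self-contained proof of the cited result: Wedderburn--Malcev to align the semisimple part, the order-preservation of the block permutation (correctly identified as the place where the triangular, rather than block-diagonal, structure enters), Skolem--Noether on each diagonal block, and Schur's lemma on the simple $(M_{n_i},M_{n_j})$-bimodules $e_iU(\mathscr{F})e_j$ with the cocycle relation $\lambda_{ik}=\lambda_{ij}\lambda_{jk}$ splitting as $c_ic_j^{-1}$. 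All of these steps are valid over an arbitrary field (the quotient modulo the radical is split separable, and the bimodule endomorphism rings are $\mathbb{F}$ because the relevant algebras are split), which matches the generality the paper needs. Your injectivity argument (a diagonal matrix commuting with all $E_{ij}$, $i\le j$, is scalar) is the same centralizer computation the paper uses, just packaged as triviality of the kernel of the restriction map rather than as the identity $y=\lambda x$. The trade-off is clear: the paper's version is two lines on top of an external reference, while yours is longer but makes the lemma independent of \cite{Cheung} for this statement (though the paper still needs Cheung's Theorem 2.4.2 on derivations later, in Lemma \ref{aut_scheme}, so the dependence is not fully eliminated).
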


\begin{proof}
	It is proved in \cite[Corollary 5.4.10]{Cheung} that
	\begin{align*}
	\mathrm{Aut}(U(\mathscr{F}))=\{\mathrm{Int}(x)\mid x\in U(\mathscr{F})^\times\}.
	\end{align*}
	On the other hand, every automorphism of the matrix algebra is inner, so let $y\in M_n^\times$ and assume $yU(\mathscr{F})y^{-1}=U(\mathscr{F})$. Then, by the description of $\mathrm{Aut}(U(\mathscr{F}))$ above, we can find $x\in U(\mathscr{F})^\times$ such that
	\begin{align*}
	\mathrm{Int}(x)\mid_{U(\mathscr{F})}=\mathrm{Int}(y)\mid_{U(\mathscr{F})}.
	\end{align*}
	It follows that $xy^{-1}$ commutes with all elements of $U(\mathscr{F})$. Hence $yx^{-1}=\lambda\,{1}$, for some $\lambda\in\mathbb{F}^\times$, and $y=\lambda x\in U(\mathscr{F})^\times$.
\end{proof}

Assume for a moment that {$\mathbb{F}$ is algebraically closed and $\mathrm{char}\,\mathbb{F}=0$. Since $G$ is abelian,} it is well known that $G$-gradings on a finite-dimensional algebra $A$ are equivalent to actions of the algebraic group $\widehat{G}:=\mathrm{Hom}_\mathbb{Z}(G,\mathbb{F}^\times)$ by automorphisms of $A$, that is, homomorphisms of algebraic groups $\widehat{G}\to\mathrm{Aut}(A)$ (see, for example, \cite[\S 1.4]{EK2013}). The homomorphism $\eta_\Gamma:\widehat{G}\to\mathrm{Aut}(A)$ corresponding to a grading $\Gamma:A=\bigoplus_{g\in G}A_g$ is defined by $\eta_\Gamma(\chi)(x)=\chi(g)x$ for all $\chi\in\widehat{G}$, $g\in G$ and $x\in A_g$.

By Lemma \ref{aut}, we have 
\begin{align*}
\mathrm{Aut}\left(U(\mathscr{F})\right)\simeq\mathrm{Stab}_{\mathrm{Aut}(M_n)}(U(\mathscr{F}))\subset\mathrm{Aut}(M_n),
\end{align*}
hence, if {$\mathbb{F}$ is algebraically closed and} $\mathrm{char}\,\mathbb{F}=0$, we obtain the desired unique extension of gradings from $U(\mathscr{F})$ to $M_n$. To generalize this result to {arbitrary $\mathbb{F}$}, we can use group schemes instead of groups. Recall that an \emph{affine group scheme} over a field $\mathbb{F}$ is a representable functor from the category $\mathrm{Alg}_\mathbb{F}$ of unital commutative associative $\mathbb{F}$-algebras to the category of groups (see e.g. \cite{Waterhouse} or \cite[Appendix A]{EK2013}). For example, the \emph{automorphism group scheme} of a finite-dimensional algebra $A$ is defined by 
\[
\mathbf{Aut}(A)(R):=\mathrm{Aut}_R(A\otimes R),\quad\forall R\in\mathrm{Alg}_\mathbb{F}.
\]
Another example of relevance to us is $\mathbf{GL}_1(A)$, for a finite-dimensional associative algebra $A$, defined by $\mathbf{GL}_1(A)(R):=(A\otimes R)^\times$. (In particular, $\mathbf{GL}_1(M_n)=\mathbf{GL}_n$.) Note that we have a homomorphism $\mathrm{Int}:\mathbf{GL}_1(A)\to\mathbf{Aut}(A)$.
 
If $G$ is an abelian group, then the group algebra $\mathbb{F}G$ is a commutative Hopf algebra, so it represents an affine group scheme, which is the scheme version of the character group $\widehat{G}$. It is denoted by $G^D$ and given by $G^D(R)=\mathrm{Hom}_{\mathbb{Z}}(G,R^\times)$. In particular, $G^D(\mathbb{F})=\widehat{G}$. If we have a $G$-grading $\Gamma$ on $A$, then we can define a homomorphism of group schemes $\eta_\Gamma:G^D\to\mathbf{Aut}(A)$ by generalizing the formula in the case of $\widehat{G}$: $(\eta_\Gamma)_R(\chi)(x\otimes r)=x\otimes \chi(g)r$ for all $R\in\mathrm{Alg}_\mathbb{F}$, $\chi\in G^D(R)$, $r\in R$, $g\in G$ and $x\in A_g$. In this way, over an arbitrary field, $G$-gradings on $A$ are equivalent to homomorphisms of group schemes $G^D\to\mathbf{Aut}(A)$. 

\begin{Lemma}\label{aut_scheme}
Over an arbitrary field, $\mathbf{Aut}(U(\mathscr{F}))$ is a quotient of $\mathbf{GL}_1(U(\mathscr{F}))$, and $\mathbf{Aut}\left(U(\mathscr{F})\right)\simeq\mathbf{Stab}_{\mathbf{Aut}(M_n)}(U(\mathscr{F}))$ via the restriction map.
\end{Lemma}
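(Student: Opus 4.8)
The plan is to upgrade the group-level statement of Lemma \ref{aut} to the level of affine group schemes, working functorially over an arbitrary commutative $\mathbb{F}$-algebra $R$. I would first establish the scheme-theoretic analogue of the Cheung result, namely that the natural homomorphism $\mathrm{Int}:\mathbf{GL}_1(U(\mathscr{F}))\to\mathbf{Aut}(U(\mathscr{F}))$ is surjective as a morphism of group schemes (equivalently, faithfully flat), which is what "$\mathbf{Aut}(U(\mathscr{F}))$ is a quotient of $\mathbf{GL}_1(U(\mathscr{F}))$" means. Concretely, for each $R\in\mathrm{Alg}_\mathbb{F}$ I would show that every $R$-automorphism of $U(\mathscr{F})\otimes R$ is of the form $\mathrm{Int}(x)$ for some $x\in (U(\mathscr{F})\otimes R)^\times$. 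The key structural fact is that $U(\mathscr{F})\otimes R\cong U(\mathscr{F}_R)$ is again an algebra of endomorphisms preserving a flag, now of free $R$-modules, so its Jacobson radical and the associated filtration are preserved by any automorphism; an automorphism therefore permutes the graded-semisimple quotient in a controlled way and is inner by the same type of argument as in the classical case.

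Once surjectivity of $\mathrm{Int}$ is in place, I would prove the isomorphism $\mathbf{Aut}(U(\mathscr{F}))\simeq\mathbf{Stab}_{\mathbf{Aut}(M_n)}(U(\mathscr{F}))$ via the restriction map by checking it on each $R$-points level and invoking the Yoneda lemma. The restriction map sends $\psi\in\mathbf{Stab}_{\mathbf{Aut}(M_n)}(U(\mathscr{F}))(R)$, i.e.\ an $R$-automorphism of $M_n\otimes R$ stabilizing $U(\mathscr{F})\otimes R$, to its restriction to $U(\mathscr{F})\otimes R$. Since every $R$-automorphism of $M_n\otimes R=M_n(R)$ is inner (this is the scheme-theoretic statement that $\mathbf{Aut}(M_n)=\mathbf{PGL}_n$), the proof of Lemma \ref{aut} transcribes almost verbatim: given $y\in \mathbf{GL}_n(R)$ with $yU(\mathscr{F})_Ry^{-1}=U(\mathscr{F})_R$, surjectivity of $\mathrm{Int}$ over $U(\mathscr{F})$ yields $x\in(U(\mathscr{F})\otimes R)^\times$ with $\mathrm{Int}(x)|_{U(\mathscr{F})_R}=\mathrm{Int}(y)|_{U(\mathscr{F})_R}$, so $yx^{-1}$ centralizes $U(\mathscr{F})_R$. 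I would then argue that the centralizer of $U(\mathscr{F})_R$ in $M_n(R)$ is $R\cdot 1$, forcing $y=\lambda x$ for a unit $\lambda\in R^\times$ and hence $y\in(U(\mathscr{F})\otimes R)^\times$. This shows the restriction map is bijective on $R$-points, naturally in $R$, giving the desired isomorphism of functors.

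The main obstacle I expect is the transition from the classical (field-point) computation of $\mathrm{Aut}$ and of centralizers to the functorial statement over an arbitrary $R$, where $R$ may have nilpotents and idempotents and is not a field. In particular, the classical centralizer argument "$yx^{-1}$ commutes with all of $U(\mathscr{F})$, hence is scalar" used in Lemma \ref{aut} relies on the double centralizer property in $M_n$, and I must verify that the centralizer of $U(\mathscr{F})\otimes R$ in $M_n(R)$ is exactly $R\cdot 1$ for every $R$, not merely over a field. Since $U(\mathscr{F})$ contains all the diagonal matrix units $E_{ii}$ and the superdiagonal off-block units, a direct computation over $R$ shows that any matrix commuting with all of them must be scalar; this is a purely combinatorial verification with entries in $R$ and does not require $R$ to be reduced or a domain. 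A secondary delicate point is confirming faithful flatness (rather than mere pointwise surjectivity) of $\mathrm{Int}:\mathbf{GL}_1(U(\mathscr{F}))\to\mathbf{Aut}(U(\mathscr{F}))$, so that "quotient" is meant in the scheme-theoretic sense; this follows because the kernel is $\mathbf{GL}_1$ of the center of $U(\mathscr{F})$, which is smooth, and the induced map on tangent spaces (derivations) is surjective, a fact that can be read off from the characteristic-free analogue of the derivation computation for $U(\mathscr{F})$.
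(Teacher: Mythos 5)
There is a genuine gap: your strategy is to prove that $\mathrm{Int}$ is surjective on $R$-points for \emph{every} $R\in\mathrm{Alg}_\mathbb{F}$, and likewise that the restriction map is bijective on every $R$-points. This is both more than what ``quotient map'' requires and, more importantly, false. The key false step is the assertion that every $R$-automorphism of $M_n(R)$ is inner: the identification $\mathbf{Aut}(M_n)\simeq\mathbf{PGL}_n$ does \emph{not} give $\mathrm{Aut}_R(M_n(R))=\mathrm{GL}_n(R)/R^\times$, because the cokernel of the map $\mathrm{GL}_n(R)/R^\times\to\mathrm{Aut}_R(M_n(R))$ is controlled by $\mathrm{Pic}(R)$ and can be nontrivial (e.g.\ for coordinate rings of affine varieties with suitable torsion in the Picard group); such $R$ admit non-inner $R$-automorphisms of $M_n(R)$. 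The same obstruction makes your claim that every $R$-automorphism of $U(\mathscr{F})\otimes R$ is inner --- which you dispatch with ``by the same type of argument as in the classical case'' --- unprovable as stated. A quotient map of affine group schemes need not be surjective on $R$-points (compare $\mathbf{GL}_1\to\mathbf{GL}_1$, $x\mapsto x^n$), so the pointwise framework is the wrong one; your centralizer computation over $R$ is correct but is not where the difficulty lies.

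The repair is the criterion you only mention in passing at the end, and it should be the engine of the whole proof: since $\mathbf{GL}_1(U(\mathscr{F}))$ is smooth, $\mathrm{Int}:\mathbf{GL}_1(U(\mathscr{F}))\to\mathbf{Aut}(U(\mathscr{F}))$ is a quotient map if and only if it is surjective on $\overline{\mathbb{F}}$-points \emph{and} its differential $\mathrm{ad}:U(\mathscr{F})\to\mathrm{Der}(U(\mathscr{F}))$ is surjective; both conditions reduce to statements over a field and are supplied by Cheung's Corollary 5.4.10 and Theorem 2.4.2, respectively. For the second assertion one does not check bijectivity of the restriction on $R$-points either: since $\mathrm{Int}$ factors through the restriction map $\mathbf{Stab}_{\mathbf{Aut}(M_n)}(U(\mathscr{F}))\to\mathbf{Aut}(U(\mathscr{F}))$, that restriction map is automatically a quotient map, and it remains only to see that its kernel is trivial, which again reduces to injectivity of the restriction on $\overline{\mathbb{F}}$-points and on the Lie-algebra (derivation) level.
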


\begin{proof}
We claim that the homomorphism $\mathrm{Int}:\mathbf{GL}_1(U(\mathscr{F}))\to\mathbf{Aut}(U(\mathscr{F}))$ is a quotient map {(in the sense of affine group schemes, see e.g. \cite[Chapter 15]{Waterhouse} or \cite[\S A.2]{EK2013})}. Since $\mathbf{GL}_1(U(\mathscr{F}))$ is smooth, it is sufficient to verify that (i) the group homomorphism $\mathrm{Int}:(U(\mathscr{F})\otimes\overline{\mathbb{F}})^\times\to\mathrm{Aut}_{\overline{\mathbb{F}}}(U(\mathscr{F})\otimes\overline{\mathbb{F}})$ is surjective, where $\overline{\mathbb{F}}$ is the algebraic closure of $\mathbb{F}$, and (ii) the Lie homomorphism $\mathrm{ad}:U(\mathscr{F})\to\mathrm{Der}(U(\mathscr{F}))$ is surjective (see e.g. \cite[Corollary A.49]{EK2013}). But (i) is satisfied by Corollary 5.4.10 in \cite{Cheung}, mentioned above, and (ii) is satisfied by Theorem 2.4.2 in the same work.

Since the homomorphism $\mathrm{Int}:\mathbf{GL}_1(U(\mathscr{F}))\to\mathbf{Aut}(U(\mathscr{F}))$ factors through the restriction map $\mathbf{Stab}_{\mathbf{Aut}(M_n)}(U(\mathscr{F}))\to\mathbf{Aut}\left(U(\mathscr{F})\right)$, it follows that this latter is also a quotient map. But its kernel is trivial, because the corresponding restriction maps for the group $\mathrm{Stab}_{\mathrm{Aut}_{\overline{\mathbb{F}}}(M_n(\overline{\mathbb{F}}))}(U(\mathscr{F})\otimes\overline{\mathbb{F}})$ and Lie algebra $\mathrm{Stab}_{\mathrm{Der}(M_n)}(U(\mathscr{F}))$ are injective (see e.g. \cite[Theorem A.46]{EK2013}).
\end{proof}

Coming back to a $G$-grading $\Gamma$ on $U(\mathscr{F})$ in canonical form, we conclude by Lemma \ref{aut_scheme} that the corresponding $G^\#$-grading on $U(\mathscr{F})$ extends to a unique $G^\#$-grading $\Gamma^\#$ on $M_n$. By construction,  $\Gamma^\#$ is admissible and induces the original grading $\Gamma$ on $U(\mathscr{F})$. It is also clear that $\Gamma^\#$ is uniquely determined by these properties. Thus, we have a bijection between admissible $G^\#$-gradings on $M_n$ and $G$-gradings on $U(\mathscr{F})$ in canonical form.

\begin{Lemma}\label{can_assoc}
For any $G$-grading on $U(\mathscr{F})$, there exists an isomorphic $G$-grading in canonical form.
\end{Lemma}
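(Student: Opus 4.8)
The plan is to straighten the given grading by an automorphism so that the standard block-diagonal idempotents become homogeneous. Write $A=U(\mathscr{F})$ and let $e_1,\ldots,e_s$ be the identity elements of the diagonal blocks $M_{n_1},\ldots,M_{n_s}$, so that $J_m=\bigoplus_q e_qAe_{q+m}$. If I can find an automorphism $\varphi$ of $A$ such that each $\varphi(e_q)$ is homogeneous with respect to the given grading $\Gamma$, then transporting $\Gamma$ along $\varphi^{-1}$ (that is, setting $\Gamma'_g=\varphi^{-1}(\Gamma_g)$) produces an isomorphic grading $\Gamma'$ for which every $e_q$ is homogeneous; since a homogeneous idempotent necessarily has trivial degree, each $J_m=\bigoplus_q e_qAe_{q+m}$ is then a $\Gamma'$-graded subspace, i.e. $\Gamma'$ is in canonical form.

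First I would record that the Jacobson radical $J$, being a characteristic ideal, is a graded subspace, hence so are all its powers $J^m$, and the quotient $\bar A=A/J\cong M_{n_1}\times\cdots\times M_{n_s}$ inherits a $G$-grading. The crucial point is that the primitive central idempotents $\bar e_1,\ldots,\bar e_s$ of $\bar A$ (the images of $e_1,\ldots,e_s$) are homogeneous of trivial degree. To see this I would use Lemma \ref{aut} (respectively Lemma \ref{aut_scheme} over an arbitrary field): every automorphism of $A$ is inner, given by conjugation by some invertible $x\in A^\times$, and such an $x$ is block-upper-triangular with invertible diagonal part $d\in\prod_q \mathrm{GL}_{n_q}$; modulo $J$, conjugation by $x$ reduces to conjugation by $\bar d$, which fixes each central idempotent $\bar e_q$. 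Passing to the functor of points, the homomorphism $G^D\to\mathbf{Aut}(A)\to\mathbf{Aut}(\bar A)$ defining the induced grading on $\bar A$ therefore fixes each $\bar e_q$, so $\bar e_q\in\bar A_e$. In particular, the grading cannot permute the diagonal blocks.

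Next I would lift this complete system of orthogonal idempotents to $A$ while keeping degrees trivial. Since $A_e$ is a subalgebra and $J\cap A_e$ is a nil ideal of $A_e$ containing $x^2-x$ for any homogeneous lift $x\in A_e$ of a given $\bar e_q$, the classical idempotent-lifting argument, carried out entirely inside $A_e$, yields orthogonal idempotents $\tilde e_1,\ldots,\tilde e_s\in A_e$ with $\sum_q\tilde e_q=1$ and $\tilde e_q\equiv e_q\pmod J$. Both $\{\tilde e_q\}$ and the standard $\{e_q\}$ are then complete systems of orthogonal idempotents reducing to the same system $\{\bar e_q\}$ modulo the nilpotent ideal $J$, so by the uniqueness part of the Wedderburn--Malcev theorem they are conjugate: there is $z\in J$ with $\mathrm{Int}(1+z)(e_q)=\tilde e_q$ for all $q$. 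Taking $\varphi=\mathrm{Int}(1+z)\in\mathrm{Aut}(A)$ completes the construction, since each $\varphi(e_q)=\tilde e_q$ is $\Gamma$-homogeneous.

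I expect the main obstacle to be the middle step---proving that the grading respects the decomposition of $\bar A$ into simple factors, equivalently that each $\bar e_q$ is homogeneous of trivial degree. This is exactly where the special feature of block-triangular matrices enters, through $\mathrm{Aut}(A)=\mathrm{Int}(A^\times)$ and Lemma \ref{aut_scheme}: for a general semisimple quotient a group grading may well permute isomorphic simple components, and it is the upper-triangular shape that forbids this. The remaining ingredients---gradedness of the $J^m$, homogeneous lifting of idempotents, and the Wedderburn--Malcev conjugacy---are standard and characteristic-free, so the argument works over any algebraically closed field.
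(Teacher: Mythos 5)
Your proof is correct and is, in essence, the paper's own argument written out in full: the paper merely notes that $J$ is graded (via Lemma \ref{aut_scheme}) and then delegates everything else to the proof of \cite[Lemma 1]{y2018}, which is exactly the chain you reconstruct --- the central idempotents of $U(\mathscr{F})/J$ are homogeneous of trivial degree because all automorphisms are inner and reduce to block-diagonal conjugations modulo $J$, one lifts them homogeneously through the nilpotent graded ideal $J$ inside the identity component, and Wedderburn--Malcev conjugacy supplies the inner automorphism $\mathrm{Int}(1+z)$ straightening the grading. I see no gaps; in particular you correctly identify the one genuinely non-routine point, namely that the innerness of $\mathbf{Aut}(U(\mathscr{F}))$ prevents the grading from permuting isomorphic simple factors of $U(\mathscr{F})/J$, which is where the block-triangular shape is actually used.
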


\begin{proof}
It follows from Lemma \ref{aut_scheme} that the Jacobson radical $J=\bigoplus_{m>0}J_m$ of $U(\mathscr{F})$ is stabilized by $\mathbf{Aut}(U(\mathscr{F}))$. Hence, $J$ is a $G$-graded ideal, so the proof of \cite[Lemma 1]{y2018} shows that, in fact, there exists an isomorphic grading such that each block is a graded subspace.
\end{proof}

\begin{Lemma}\label{iso_grad1}
If two $G$-gradings, $\Gamma_1$ and $\Gamma_2$, on $U(\mathscr{F})$ are in canonical form and isomorphic to one another, then there exists a block-diagonal matrix $x\in U(\mathscr{F})^\times$ such that $\psi_0=\mathrm{Int}(x)$ is an isomorphism between $\Gamma_1$ and $\Gamma_2$.
\end{Lemma}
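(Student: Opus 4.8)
The statement claims that an isomorphism between two canonical-form gradings on $U(\mathscr{F})$ can be realized by conjugation by a \emph{block-diagonal} invertible matrix. The plan is to start from an arbitrary isomorphism $\psi\colon(U(\mathscr{F}),\Gamma_1)\to(U(\mathscr{F}),\Gamma_2)$ and gradually upgrade it. By Lemma \ref{aut} we know $\mathrm{Aut}(U(\mathscr{F}))=\{\mathrm{Int}(x)\mid x\in U(\mathscr{F})^\times\}$, so $\psi=\mathrm{Int}(x)$ for some invertible $x\in U(\mathscr{F})$. The goal is to modify $x$ so that it becomes block-diagonal, i.e. lies in $J_0^\times$, without destroying the property that $\mathrm{Int}(x)$ carries $\Gamma_1$ to $\Gamma_2$.

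**The key step: using the canonical form.**
First I would exploit that both gradings are in canonical form, meaning each $J_m$ is graded for both $\Gamma_1$ and $\Gamma_2$. Since $\psi$ maps $\Gamma_1$-homogeneous components into the corresponding $\Gamma_2$-components, and since the natural $\mathbb{Z}$-grading is intrinsic (the filtration by powers of the radical is preserved by every automorphism), $\psi$ respects the $\mathbb{Z}$-grading: $\psi(J_m)=J_m$ for all $m$. In particular $\psi$ restricts to an automorphism of the semisimple part $J_0\cong M_{n_1}\oplus\cdots\oplus M_{n_s}$ that is compatible with the induced gradings there. The plan is then to write $x=x_0+x_{+}$ with $x_0\in J_0$ the block-diagonal part and $x_+\in J=\bigoplus_{m>0}J_m$, and to argue that $x_0$ is itself invertible (as an element of $J_0$) and that $\mathrm{Int}(x_0)$ already does the job.

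**Reducing to the block-diagonal part.**
The crux is to show we may replace $x$ by $x_0$. One clean way: consider the composite $\mathrm{Int}(x_0)^{-1}\circ\psi=\mathrm{Int}(x_0^{-1}x)$. Writing $x_0^{-1}x=1+u$ with $u\in J$, this is an automorphism of the form $\mathrm{Int}(1+u)$ that fixes $J_0$ pointwise and acts as a graded automorphism of $U(\mathscr{F})$ preserving each $J_m$. I would then show that such an automorphism is \emph{graded-trivial in the relevant sense}: because $\Gamma_1$ and $\Gamma_2$ already agree after applying $\mathrm{Int}(x_0)$, the remaining unipotent factor $\mathrm{Int}(1+u)$ must be an automorphism of the \emph{single} graded algebra $(U(\mathscr{F}),\Gamma_2)$, and the point is that $\mathrm{Int}(x_0)$ alone suffices to match the gradings. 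Concretely, set $\psi_0=\mathrm{Int}(x_0)$; one checks $\psi_0(\Gamma_1)=\Gamma_2$ by comparing homogeneous components degree-by-degree, using that $\psi$ and $\psi_0$ agree modulo the radical and that matching gradings is detected on the semisimple quotient together with the bimodule structure of each $J_m$ over $J_0$.

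**Main obstacle.**
The hard part will be the last reduction: showing that the unipotent part $\mathrm{Int}(1+u)$ can be discarded, i.e. that $\mathrm{Int}(x_0)$ is \emph{already} a graded isomorphism $\Gamma_1\to\Gamma_2$ and not merely a graded isomorphism up to the radical. This requires a careful degree-chasing argument on each off-diagonal block $J_m$, viewed as a graded bimodule over the block-diagonal algebra $J_0$, to confirm that once the semisimple-part action is fixed by $x_0$, the homogeneous decomposition of each $J_m$ is forced and the inner automorphism $\mathrm{Int}(1+u)$ acts trivially on degrees. I expect this to hinge on the rigidity of gradings on the simple blocks $M_{n_i}$ and on how the bicharacter data rigidifies the bimodule gradings, so that there is no room for a nontrivial unipotent adjustment at the level of the $G^\#$-grading.
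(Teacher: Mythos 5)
You have the right construction: write $\psi=\mathrm{Int}(y)$, take $x$ to be the block-diagonal part of $y$, and aim to show that $\psi_0=\mathrm{Int}(x)$ already matches the gradings. This is exactly the paper's choice. But the proposal has two problems. First, the intermediate claim that $\psi(J_m)=J_m$ does not follow from what you cite and is false in general. An automorphism preserves the radical filtration, so $\psi(J^m)=J^m$, and $\psi$ maps $\Gamma_1$-components to $\Gamma_2$-components; but a $\Gamma_1$-homogeneous element of $J_m$ is sent to a $\Gamma_2$-homogeneous element of $J^m$, which may have nonzero components in $J_{m'}$ for $m'>m$. Canonical form says each $J_{m'}$ is a graded subspace, not that each homogeneous component lies in a single $J_{m'}$. (Concretely, $\mathrm{Int}(1+E_{1n})$ is an automorphism of the trivially graded $U(\mathscr{F})$, both gradings in canonical form, yet it moves $E_{11}$ out of $J_0$.) If $\psi$ did preserve each $J_m$, then $y$ would essentially already be block-diagonal and there would be nothing to prove.

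Second, and more seriously, the step you label the ``main obstacle'' --- showing that $\mathrm{Int}(x)$ is a graded isomorphism and the unipotent factor can be discarded --- is the entire content of the lemma, and you do not prove it; you only predict it will follow from the rigidity of gradings on the simple blocks and the bicharacter data. No such machinery is needed, and that route would be considerably harder to carry out. The argument is elementary: for $a\in J_m$ homogeneous with respect to $\Gamma_1$, we have $\psi(a)\in J^m=J_m\oplus J^{m+1}$, so write $\psi(a)=b+c$ with $b\in J_m$ and $c\in J^{m+1}$. Because $\Gamma_2$ is in canonical form, both $J_m$ and $J^{m+1}$ are $\Gamma_2$-graded, so $b$ and $c$ are separately $\Gamma_2$-homogeneous of the same degree as $a$. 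Writing $y=x(1+v)$ with $v\in J$, one computes $(1+v)a(1+v)^{-1}\in a+J^{m+1}$, hence the $J_m$-component $b$ of $yay^{-1}$ is exactly $xax^{-1}=\psi_0(a)$. Since $\Gamma_1$ is in canonical form, every $\Gamma_1$-homogeneous element is a sum of such $a$'s over the various $m$, so $\psi_0$ preserves degrees, and being an automorphism it is a graded isomorphism. Your proposal is missing precisely this decomposition argument, which is the whole proof.
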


\begin{proof}
Let $\psi=\mathrm{Int}(y)$ be an isomorphism between $\Gamma_1$ and $\Gamma_2$. Write $y=(y_{ij})_{1\le i\le j\le s}$ in blocks and let $x=\mathrm{diag}(y_{11},\ldots,y_{ss})$. Then $x$ is invertible, so let $\psi_0=\mathrm{Int}(x)$.

Fix $m\in\{0,1,\ldots,s-1\}$ and let $a\in J_m$ be $G$-homogeneous with respect to $\Gamma_1$. Since $J^m=J_m\oplus J^{m+1}$, we can uniquely write $\psi(a)=b+c$, where $b\in J_m$ and $c\in J^{m+1}$. Since $\Gamma_2$ is in canonical form, $J_m$ and $J^{m+1}$ are $G$-graded subspaces with respect to $\Gamma_2$. Since $\psi$ preserves $G$-degree, it follows that $b$ and $c$ are $G$-homogeneous elements with respect to $\Gamma_2$ of the same $G$-degree as $a$ with respect to $\Gamma_1$. Finally, note that $\psi_0(a)=b$. Since $m$ and $a$ were arbitrary, we have shown that $\psi_0$ is an isomorphism between $\Gamma_1$ and $\Gamma_2$. 
\end{proof}

Now we can prove the converse of Lemma \ref{lem2}.

\begin{Lemma}\label{iso_grad2}
If two admissible $G^\#$-gradings on $M_n$ induce isomorphic $G$-gradings on $U(\mathscr{F})$, then they are isomorphic.
\end{Lemma}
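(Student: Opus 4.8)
Lemma~\ref{iso_grad2} is the converse of Lemma~\ref{lem2}, so the goal is to start from two admissible $G^\#$-gradings on $M_n$, say $\Gamma_1^\#$ and $\Gamma_2^\#$, whose induced $G$-gradings on $U(\mathscr{F})$---call them $\Gamma_1$ and $\Gamma_2$---are isomorphic, and to conclude that $\Gamma_1^\#$ and $\Gamma_2^\#$ are themselves isomorphic as $G^\#$-gradings on $M_n$. The natural strategy is to exploit the bijection established just before Lemma~\ref{can_assoc}: admissible $G^\#$-gradings on $M_n$ correspond to $G$-gradings on $U(\mathscr{F})$ \emph{in canonical form}. So the first thing I would do is observe that each $\Gamma_i$ is automatically in canonical form, since it arises from an admissible $G^\#$-grading via $B_g=\bigoplus_{m\ge 0}A_{(m,g)}$, which makes each $J_m\cap B_g$ homogeneous and hence each $J_m$ a $G$-graded subspace. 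Thus I have two $G$-gradings in canonical form that are isomorphic.

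\textbf{The core step.} The key is to upgrade the isomorphism of $G$-gradings to an isomorphism of $G^\#$-gradings. I would invoke Lemma~\ref{iso_grad1}: since $\Gamma_1$ and $\Gamma_2$ are in canonical form and isomorphic, there is a \emph{block-diagonal} $x\in U(\mathscr{F})^\times$ such that $\psi_0=\mathrm{Int}(x)$ is an isomorphism between $\Gamma_1$ and $\Gamma_2$. The point of the block-diagonal form is that $x\in J_0$, so conjugation by $x$ preserves each block-diagonal $J_m$ (because $J_0 J_m J_0\subset J_m$, using that $x^{-1}$ is also block-diagonal). Therefore $\psi_0$ not only respects the $G$-grading but also preserves the natural $\mathbb{Z}$-grading on $U(\mathscr{F})$, and hence respects the combined $G^\#$-grading $J_m\cap B_g$ on $U(\mathscr{F})$.

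\textbf{Extending to $M_n$.} It remains to transport this to $M_n$. Here I would use Lemma~\ref{aut_scheme}, or more concretely the resulting bijection: $\psi_0=\mathrm{Int}(x)$ with $x\in U(\mathscr{F})^\times\subset M_n^\times$ extends to the automorphism $\mathrm{Int}(x)$ of the whole matrix algebra $M_n$. Because $x$ is block-diagonal, $\mathrm{Int}(x)$ preserves every $J_m\subset M_n$, hence is compatible with the $\mathbb{Z}$-grading induced by $G^\#\to\mathbb{Z}$; and since $\Gamma_i^\#$ is the unique admissible extension of $\Gamma_i$, the map $\mathrm{Int}(x)$ sends the homogeneous components of $\Gamma_1^\#$ to those of $\Gamma_2^\#$. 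Concretely, $\mathrm{Int}(x)$ carries $(\Gamma_1^\#)_{(m,g)}$ into $(\Gamma_2^\#)_{(m,g)}$ because it does so after restriction to $U(\mathscr{F})$ and the extension to $M_n$ is forced by uniqueness in Lemma~\ref{aut_scheme}. Thus $\mathrm{Int}(x)$ is an isomorphism of $G^\#$-gradings on $M_n$, as desired.

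\textbf{Anticipated obstacle.} The routine parts are checking that $\Gamma_i$ is in canonical form and that block-diagonal conjugation respects both gradings. The one point demanding care is the passage from an isomorphism of the $G^\#$-grading \emph{on $U(\mathscr{F})$} to an isomorphism \emph{on $M_n$}: I must be sure that the same $\mathrm{Int}(x)$ that works on $U(\mathscr{F})$ genuinely matches $\Gamma_1^\#$ with $\Gamma_2^\#$ on all of $M_n$, not merely on the subalgebra. The cleanest way to secure this is to lean on the uniqueness half of the extension bijection (Lemma~\ref{aut_scheme}): since $\mathrm{Int}(x)$ maps $\Gamma_1^\#|_{U(\mathscr{F})}$ onto $\Gamma_2^\#|_{U(\mathscr{F})}$ and both $G^\#$-gradings on $M_n$ are admissible, the induced grading $\mathrm{Int}(x)\cdot\Gamma_1^\#$ is an admissible $G^\#$-grading on $M_n$ restricting to $\Gamma_2^\#|_{U(\mathscr{F})}$, whence it must coincide with $\Gamma_2^\#$ by uniqueness. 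This closes the argument.
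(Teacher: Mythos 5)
Your proposal is correct and takes essentially the same route as the paper: both use Lemma \ref{iso_grad1} to produce a block-diagonal $\mathrm{Int}(x)$, observe that it preserves the natural $\mathbb{Z}$-grading and hence is an isomorphism of the $G^\#$-gradings restricted to $U(\mathscr{F})$, and then transfer this to $M_n$ via Lemma \ref{aut_scheme}. The only cosmetic difference is that the paper phrases the final step as an equality of composed group-scheme actions, whereas you phrase it as uniqueness of the admissible extension --- these amount to the same appeal to $\mathbf{Aut}(U(\mathscr{F}))\simeq\mathbf{Stab}_{\mathbf{Aut}(M_n)}(U(\mathscr{F}))$.
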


\begin{proof}
Let $\Gamma_1$ and $\Gamma_2$ be two isomorphic $G$-gradings on $U(\mathscr{F})$ obtained from two $G^\#$-gradings on $M_n$, $\Gamma_1^\#$ and $\Gamma_2^\#$, respectively. For $i=1,2$, let $\eta_i:(G^\#)^D\to\mathbf{Aut}(M_n)$ be the action corresponding to $\Gamma_i^\#$. Consider also the restriction $\Gamma_i'$ of $\Gamma_i^\#$ to $U(\mathscr{F})$ and the corresponding action  $\eta_i':(G^\#)^D\to\mathbf{Aut}(U(\mathscr{F}))$. By Lemma \ref{iso_grad1}, we can find an isomorphism $\psi_0=\mathrm{Int}(x)$ between $\Gamma_1$ and $\Gamma_2$, where $x$ is block-diagonal. Such $\psi_0$ preserves the natural $\mathbb{Z}$-grading, so it is actually an isomorphism between the $G^\#$-gradings $\Gamma_1'$ and $\Gamma_2'$. Hence, $\psi_0\eta_1'(\chi)=\eta_2'(\chi)\psi_0$ for all $\chi\in(G^\#)^D(R)$ and all $R\in\mathrm{Alg}_\mathbb{F}$. By Lemma \ref{aut_scheme}, this implies $\psi_0\eta_1(\chi)=\eta_2(\chi)\psi_0$ for all $\chi\in(G^\#)^D(R)$, which means $\psi_0$ is an isomorphism between $\Gamma_1^\#$ and $\Gamma_2^\#$.
\end{proof}

We summarize the results of this section:

\begin{Thm}\label{th:main_assoc}
{Over an arbitrary field,} the mapping of an admissible $G^\#$-grading on $M_n$ to a $G$-grading on $U(\mathscr{F})$, given by restriction and coarsening, yields a bijection between the admissible isomorphism classes of $G^\#$-gradings on $M_n$ and the isomorphism classes of $G$-gradings on $U(\mathscr{F})$.\qed
\end{Thm}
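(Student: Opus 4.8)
The plan is to assemble the lemmas of this section into the three properties characterizing a bijection on isomorphism classes: well-definedness, injectivity, and surjectivity of the induced map $\Phi$ that sends the class of an admissible $G^\#$-grading on $M_n$ to the class of the $G$-grading it induces on $U(\mathscr{F})$ by restriction and coarsening. I would first record that $\Phi$ is well-defined on isomorphism classes, which is exactly the content of Lemma \ref{lem2}: isomorphic admissible $G^\#$-gradings on $M_n$ restrict and coarsen to isomorphic $G$-gradings on $U(\mathscr{F})$, so the class of the image depends only on the class of the source.

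Injectivity of $\Phi$ is then precisely Lemma \ref{iso_grad2}, the converse of Lemma \ref{lem2}: if two admissible $G^\#$-gradings induce isomorphic $G$-gradings on $U(\mathscr{F})$, then they are already isomorphic as $G^\#$-gradings. Hence distinct classes in the domain map to distinct classes in the codomain. For surjectivity, I would start with an arbitrary $G$-grading on $U(\mathscr{F})$ and, by Lemma \ref{can_assoc}, replace it by an isomorphic grading $\Gamma$ in canonical form. The discussion following Lemma \ref{aut_scheme} supplies a bijection, at the level of actual gradings, between admissible $G^\#$-gradings on $M_n$ and $G$-gradings on $U(\mathscr{F})$ in canonical form: each such $\Gamma$ yields a $G^\#$-grading on $U(\mathscr{F})$, which (using $\mathbf{Aut}(U(\mathscr{F}))\simeq\mathbf{Stab}_{\mathbf{Aut}(M_n)}(U(\mathscr{F}))$) extends uniquely to an admissible $G^\#$-grading $\Gamma^\#$ on $M_n$ inducing $\Gamma$ in turn. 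Thus the class of $\Gamma$, which equals the class we started with, lies in the image of $\Phi$.

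I do not expect a genuine obstacle here, since all the substantive work has already been carried out in Lemmas \ref{lem2}, \ref{aut_scheme}, \ref{can_assoc}, and \ref{iso_grad2}, and the theorem is their formal combination. The one point demanding care is to keep the distinction between the \emph{strict} bijection on actual gradings (admissible versus canonical form) produced by the unique-extension argument and the bijection on \emph{isomorphism classes} asserted here. Surjectivity genuinely uses the strict bijection together with Lemma \ref{can_assoc}, whereas injectivity does not follow formally from it---two distinct admissible $G^\#$-gradings could restrict to distinct but isomorphic canonical-form gradings---and so requires the separate block-diagonal conjugation argument packaged in Lemmas \ref{iso_grad1} and \ref{iso_grad2}.
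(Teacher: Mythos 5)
Your proposal is correct and matches the paper's own argument: the theorem is stated with a \qed precisely because it is the formal assembly of Lemma \ref{lem2} (well-definedness), Lemma \ref{iso_grad2} (injectivity), and Lemma \ref{can_assoc} together with the unique-extension bijection following Lemma \ref{aut_scheme} (surjectivity). Your closing remark correctly identifies why injectivity needs the separate argument of Lemmas \ref{iso_grad1} and \ref{iso_grad2} rather than following from the strict bijection alone.
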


{If $\mathbb{F}$ is algebraically closed, then the} admissible isomorphism classes of $G^\#$-gradings on $M_n$ can be parametrized by the triples $(T,\beta,(\kappa_1,\ldots,\kappa_s))$, where $T\subset G$ is a finite subgroup, $\beta:T\times T\to\mathbb{F}^\times$ is a non-degenerate alternating bicharacter and $\kappa_i:G/T\to\mathbb{Z}_{\ge0}$ are functions with finite support such that $|\kappa_i|\sqrt{|T|}=n_i$, for each $i=1,2,\ldots,s$. Hence, isomorphism classes of $G$-gradings on $U(\mathscr{F})$ are parametrized by the same triples.

Choosing, for each $\kappa_i$, a $k_i$-tuple $\gamma_i$ of elements of $G$, where $k_i=|\kappa_i|$, we reproduce the description of $G$-gradings on $U(\mathscr{F})$ originally obtained in \cite{VaZa2012}. Note, however, that we do not need to assume that $G$ is finite, nor $\mathrm{char}\,\mathbb{F}=0$. Also note that we have a description not only of $G$-gradings but of their isomorphism classes, which gives an alternative proof of the following result first established in \cite[Corollary 4]{BFD2018}:

\begin{Cor}\label{cor:main assoc}
Two $G$-gradings on $U(\mathscr{F})$, determined by $(T,\beta,(\kappa_1,\ldots,\kappa_s))$ and $(T',\beta',(\kappa_1',\ldots,\kappa_s'))$, are isomorphic if and only if $T'=T$, $\beta'=\beta$ and there exists $g\in G$ such that $\kappa_i'=g\kappa_i$, for all $i=1,2,\ldots,s$.\qed
\end{Cor}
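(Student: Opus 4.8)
The plan is to deduce the Corollary directly from Theorem \ref{th:main_assoc} together with the parametrization of matrix gradings recalled in Section \ref{prelim}. By Theorem \ref{th:main_assoc}, the two given $G$-gradings on $U(\mathscr{F})$ are isomorphic if and only if the admissible $G^\#$-gradings on $M_n$ inducing them are isomorphic as $G^\#$-gradings. So it suffices to take the isomorphism criterion for $G^\#$-gradings on $M_n$ (applied to $G^\#$ in place of $G$): the triples $(T,\beta,\kappa)$ and $(T',\beta',\kappa')$ determine the same class if and only if $T'=T$, $\beta'=\beta$, and $\kappa'=g^\#\kappa$ for some $g^\#\in G^\#$ (see \cite[Theorem 2.27]{EK2013}), and rewrite it in terms of the block data $(\kappa_1,\ldots,\kappa_s)$.

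First I would fix, for each grading, the admissible representative sitting at the $\mathbb{Z}$-levels $-1,\ldots,-s$, so that the associated functions $\kappa,\kappa'\colon\mathbb{Z}\times(G/T)\to\mathbb{Z}_{\ge0}$ satisfy $\kappa(-i,x)=\kappa_i(x)$ and $\kappa'(-i,x)=\kappa'_i(x)$ for $i\in\{1,\ldots,s\}$, vanishing for all other values of the first coordinate; this is precisely the restatement of admissibility in terms of $\kappa$ given in the text. Writing $g^\#=(b,g)$ with $b\in\mathbb{Z}$ and $g\in G$, the translation formula $g\kappa(x)=\kappa(g^{-1}x)$ yields $(g^\#\kappa)(-i,x)=\kappa(-i-b,g^{-1}x)$, which equals $\kappa_{i+b}(g^{-1}x)$ when $i+b\in\{1,\ldots,s\}$ and $0$ otherwise. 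The key step is then to show that $b=0$. Since $U(\mathscr{F})=UT(n_1,\ldots,n_s)$ is fixed, each block size satisfies $n_i\ge1$, and the constraint $|\kappa_i|\sqrt{|T|}=n_i$ forces $|\kappa_i|=|\kappa'_i|\ge1$; in particular no $\kappa_i$ or $\kappa'_i$ vanishes. Demanding $\kappa'_i\ne0$ for $i=1$ and for $i=s$ in the identity $\kappa'_i=g^\#\kappa(-i,\cdot)$ gives $1+b\ge1$ and $s+b\le s$, whence $b=0$. With $b=0$, the identity collapses to $\kappa'_i(x)=\kappa_i(g^{-1}x)=(g\kappa_i)(x)$ for all $i$, i.e.\ $\kappa'_i=g\kappa_i$ simultaneously, where $(0,g)$ is identified with $g\in G$.

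For the converse, given $T'=T$, $\beta'=\beta$ and $\kappa'_i=g\kappa_i$ for all $i$, the element $g^\#=(0,g)\in G^\#$ satisfies $\kappa'=g^\#\kappa$, so the two admissible $G^\#$-gradings on $M_n$ are isomorphic and hence, by Theorem \ref{th:main_assoc}, the induced $G$-gradings on $U(\mathscr{F})$ are isomorphic. I expect the only delicate point to be the vanishing of the $\mathbb{Z}$-component $b$: everything else is a routine unwinding of the translation action. The argument above pins it down through the positivity of the block sizes, but I would double-check that it is admissibility itself — not merely the value of $n$ — that forces the blocks to occupy the consecutive levels $-1,\ldots,-s$ with no gaps, so that a nonzero $\mathbb{Z}$-shift genuinely breaks the match between the supports of $\kappa$ and $\kappa'$.
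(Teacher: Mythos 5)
Your argument is correct and is essentially the proof the paper intends: the corollary is stated with a \qed precisely because it follows from Theorem \ref{th:main_assoc} together with the isomorphism criterion for $G^\#$-gradings on $M_n$ and the normalization of admissible $\kappa$'s described after Lemma \ref{lem1}. The one detail you make explicit that the paper leaves implicit --- that the $\mathbb{Z}$-component $b$ of the translating element $g^\#=(b,g)$ must vanish because the $\mathbb{Z}$-supports $\{-s,\dots,-1\}$ of $\kappa$ and $\kappa'$ are both full intervals (all $n_i\ge 1$, hence all $\kappa_i\ne 0$) --- is handled correctly.
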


\section{Lie case}\label{Lie_case}

Now we turn our attention to $U(\mathscr{F})^{(-)}$, that is, $U(\mathscr{F})$ viewed as a Lie algebra with respect to the commutator $[x,y]=xy-yx$. {Since we will be working with Lie and associative products at the same time, we will always indicate the former by brackets and keep using juxtaposition for the latter.} We assume that the grading group $G$ is abelian and the ground field $\mathbb{F}$ is algebraically closed of characteristic $0$, and follow the same approach as in the associative case. 

Denote by $\tau$ the flip along the secondary diagonal on $M_n${, that is, $\tau(E_{ij})=E_{n-j+1,n-i+1}$, for all matrix units $E_{ij}\in M_n$}. Note that $U(\mathscr{F})^\tau=U(\mathscr{F})$ if and only if $n_i=n_{s-i+1}$ for all $i=1,2,\ldots,\lfloor\frac{s}2\rfloor$. Let 
\[
U(\mathscr{F})_0=\{x\in U(\mathscr{F})\mid\mathrm{tr}(x)=0\},
\] 
which is a Lie subalgebra of $U(\mathscr{F})^{(-)}$. Moreover, $U(\mathscr{F})^{(-)}=U(\mathscr{F})_0\oplus\mathbb{F}{1}$, where ${1}\in U(\mathscr{F})$ is the identity matrix. 
The center $\mathfrak{z}(U(\mathscr{F})^{(-)})=\mathbb{F}{1}$ is always graded, so ${1}$ is a homogeneous element. If we change its degree arbitrarily, we obtain a new well-defined grading, which is not isomorphic to the original one, but will induce the same grading on $U(\mathscr{F})^{(-)}/\mathbb{F}{1}\simeq U(\mathscr{F})_0$ (compare with  \cite[Definition 6]{pkfy2017}). It turns out that, up to isomorphism, a $G$-grading on $U(\mathscr{F})^{(-)}$ is determined by the induced $G$-grading on $U(\mathscr{F})_0$ and the degree it assigns to the identity matrix (see Corollary \ref{reduction_to_trace_0} in Section \ref{practical_iso}). Conversely, any $G$-grading on $U(\mathscr{F})_0$ extends to $U(\mathscr{F})^{(-)}=U(\mathscr{F})_0\oplus\mathbb{F}{1}$ by defining the degree of ${1}$ arbitrarily. Thus, we have a bijection between the isomorphism classes of $G$-gradings on $U(\mathscr{F})^{(-)}$ and the pairs consisting of an isomorphism class of $G$-gradings on $U(\mathscr{F})_0$ and an element of $G$.

We start by computing the automorphism group of $U(\mathscr{F})_0$. To this end, we will use the following description of the automorphisms of $\mathrm{Aut}(U(\mathscr{F})^{(-)})$, which was proved in \cite{MaSo1999} for the field of complex numbers. 

\begin{Thm}[{\cite[Theorem 4.1.1]{Cecil}}]\label{aut_cecil}
Let $\phi$ be an automorphism of $U(\mathscr{F})^{(-)}$, and assume $\mathrm{char}\,\mathbb{F}=0$ or $\mathrm{char}\,\mathbb{F}>3$. Then there exist $p,d\in U(\mathscr{F})$, with $p$ invertible and $d$ block-diagonal, such that one of the following holds:
\begin{enumerate}
\item $\phi(x)=pxp^{-1}+\mathrm{tr}(xd){1}$, for all $x\in U(\mathscr{F})$, or
\item $\phi(x)=-px^\tau p^{-1}+\mathrm{tr}(xd){1}$, for all $x\in U(\mathscr{F})$.\qed
\end{enumerate}
\end{Thm}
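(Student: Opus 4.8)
The plan is to pin down $\phi$ from its action on a few canonical substructures of $\mathfrak{p}:=U(\mathscr{F})^{(-)}$ and then reconstruct it everywhere. First I would record the Levi decomposition $\mathfrak{p}=\mathfrak{l}\ltimes\mathfrak{n}$, where $\mathfrak{l}=J_0=\bigoplus_{i=1}^{s}\mathfrak{gl}_{n_i}$ is the block-diagonal part and $\mathfrak{n}=\bigoplus_{m>0}J_m$ is the nilradical: the center is $\mathbb{F}1$, the radical is $\mathfrak{z}(\mathfrak{l})\oplus\mathfrak{n}$, the semisimple Levi factor is $\mathfrak{s}=\bigoplus_i\mathfrak{sl}_{n_i}$, and $[\mathfrak{p},\mathfrak{p}]=\mathfrak{s}\oplus\mathfrak{n}$. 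Since the nilradical is a characteristic ideal, $\phi(\mathfrak{n})=\mathfrak{n}$; and since $\phi(\mathfrak{s})$ is again a maximal semisimple subalgebra, it is conjugate to $\mathfrak{s}$ by $\mathrm{Int}(u)$ for a unipotent $u\in 1+\mathfrak{n}\subset U(\mathscr{F})^\times$. Composing $\phi$ with this inner automorphism, I may assume $\phi(\mathfrak{s})=\mathfrak{s}$. Throughout, the hypothesis $\mathrm{char}\,\mathbb{F}=0$ or $>3$ is what guarantees complete reducibility of $\mathfrak{s}$-modules, the conjugacy of Levi complements, and the standard description of $\mathrm{Aut}(\mathfrak{sl}_{n})$.

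Next I would analyze $\phi|_{\mathfrak{s}}$, which permutes the simple ideals $\mathfrak{sl}_{n_i}$ by some $\pi$ with $n_{\pi(i)}=n_i$. The crucial point is that $\pi$ is far from arbitrary: the degree-one layer $\mathfrak{n}/[\mathfrak{n},\mathfrak{n}]\cong J_1=\bigoplus_{i}\mathfrak{n}_{i,i+1}$ is characteristic, and as an $\mathfrak{s}$-module each summand $\mathfrak{n}_{i,i+1}\cong V_{n_i}\otimes V_{n_{i+1}}^{*}$ is a distinct irreducible that couples precisely the adjacent factors $i,i+1$. Hence $\phi$ permutes these summands, forcing $\pi$ to be an automorphism of the path $1-2-\cdots-s$, i.e.\ the identity or the reversal $i\mapsto s+1-i$; the latter can occur only when $n_i=n_{s+1-i}$, which is exactly the condition that $U(\mathscr{F})^{\tau}=U(\mathscr{F})$. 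Moreover, this coupling forbids applying the diagram (dual/transpose) automorphism to a single interior factor, as that would send $V_{n_i}\otimes V_{n_{i+1}}^{*}$ to a module not occurring in $\mathfrak{p}$; the only admissible outer option is the global flip $-\tau$ (a Lie automorphism, since $\tau$ is an anti-automorphism of $M_n$), which reverses the order and dualizes every factor at once. After composing with $-\tau$ when $\pi$ is the reversal, I reduce to $\pi=\mathrm{id}$, whence $\phi|_{\mathfrak{sl}_{n_i}}=\mathrm{Int}(q_i)|_{\mathfrak{sl}_{n_i}}$ for some $q_i\in GL_{n_i}$; composing with $\mathrm{Int}(q^{-1})$ for the block-diagonal $q=\mathrm{diag}(q_1,\dots,q_s)$, I may assume $\phi$ fixes $\mathfrak{s}$ pointwise.

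For the reconstruction, once $\phi$ fixes $\mathfrak{s}$ pointwise it commutes with the adjoint $\mathfrak{s}$-action, hence is an $\mathfrak{s}$-module endomorphism of $\mathfrak{p}$. By Schur's lemma $\phi$ is a scalar $c_{ij}$ on each irreducible $\mathfrak{n}_{ij}$, and the bracket relations $[\mathfrak{n}_{i,i+1},\mathfrak{n}_{i+1,i+2}]=\mathfrak{n}_{i,i+2}$ force $c_{ij}=\prod_{k=i}^{j-1}c_{k,k+1}$; so all scalars are governed by the adjacent ones, which I normalize to $1$ by conjugating with a block-scalar $\mathrm{diag}(t_1 I_{n_1},\dots,t_s I_{n_s})$ (this fixes $\mathfrak{s}$ and scales $\mathfrak{n}_{ij}$ by $t_it_j^{-1}$). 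Now $\phi$ fixes $\mathfrak{s}\oplus\mathfrak{n}=[\mathfrak{p},\mathfrak{p}]$ pointwise, and the same analysis on the trivial isotypic component $\mathfrak{z}(\mathfrak{l})$, together with $[\mathfrak{z}(\mathfrak{l}),\mathfrak{n}_{i,i+1}]=\mathfrak{n}_{i,i+1}$, shows $\phi(w)-w\in\mathbb{F}1$ for $w\in\mathfrak{z}(\mathfrak{l})$. Thus $\phi(x)=x+\tilde\lambda(x)1$ for a linear functional $\tilde\lambda$ vanishing on $[\mathfrak{p},\mathfrak{p}]$, which is realized in the trace form as $x\mapsto\mathrm{tr}(xd)$ with $d$ block-diagonal. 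Reversing the normalizations and absorbing all the conjugations (each by an element of $U(\mathscr{F})^\times$) into a single invertible $p$ yields the two asserted forms $\phi(x)=pxp^{-1}+\mathrm{tr}(xd)1$ and $\phi(x)=-px^{\tau}p^{-1}+\mathrm{tr}(xd)1$.

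I expect the main obstacle to be the orientation-and-coupling step of the second paragraph: proving rigorously that the factor permutation must be the identity or the full reversal \emph{and} that the dualizations cannot be distributed independently among the factors but only realized globally by $\tau$. This rests on carefully tracking the $\mathfrak{s}$-module types of the bimodules $\mathfrak{n}_{i,i+1}$ and their brackets. A secondary source of technical care is the presence of small blocks: a block with $n_i=1$ contributes trivial $\mathfrak{s}$-modules that enlarge the trivial isotypic component (so the clean Schur argument above must be run jointly on $\mathfrak{z}(\mathfrak{l})$ and those pieces), while $n_i=2$ removes the outer automorphism of that factor. These degeneracies, together with the small-characteristic pathologies in $\mathfrak{sl}_2$ and $\mathfrak{sl}_3$, are precisely what the hypothesis $\mathrm{char}\,\mathbb{F}=0$ or $>3$ is designed to control.
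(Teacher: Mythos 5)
This theorem is not proved in the paper at all: it is quoted with attribution from Cecil's thesis \cite{Cecil} (which extends the Marcoux--Sourour result \cite{MaSo1999}), so you are supplying an argument where the paper supplies only a citation. Your Levi--Malcev--Schur outline is a credible route in the generic case, but as written it has a genuine gap precisely where you wave at ``small blocks.'' Every step of your argument after the Levi decomposition runs on the semisimple part $\mathfrak{s}=\bigoplus_i\mathfrak{sl}_{n_i}$: Malcev conjugacy to normalize $\phi(\mathfrak{s})$, the $\mathfrak{s}$-module types of the $\mathfrak{n}_{i,i+1}$ to pin down the permutation and the admissible dualizations, and Schur's lemma on the isotypic components to force $\phi$ to be scalar on each $\mathfrak{n}_{ij}$. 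When some $n_i=1$ the corresponding factor of $\mathfrak{s}$ vanishes, distinct blocks $\mathfrak{n}_{ij}$ can lie in the same isotypic component (already for $UT(1,2,1)$ the modules $\mathfrak{n}_{12}\cong V_2^*$ and $\mathfrak{n}_{23}\cong V_2$ are isomorphic over $\mathfrak{sl}_2$, so Schur does not confine $\phi$ to the individual blocks), and the ``path'' you need $\pi$ to be an automorphism of is no longer visible in the $\mathfrak{s}$-module structure of $J_1$. In the extreme case $n_1=\dots=n_s=1$, i.e.\ $U(\mathscr{F})=UT_s$, one has $\mathfrak{s}=0$ and everything from the Malcev step onward is vacuous --- yet this Borel case is the hardest one and requires a genuinely different mechanism (a weight decomposition with respect to the torus $\mathfrak{d}$, or a downward induction on the filtration $J\supset J^2\supset\cdots$ absorbing corrections into unipotent inner automorphisms; cf.\ the separate treatment in \cite{pkfy2017}). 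Declaring these degeneracies ``secondary technical care'' does not close the gap, because no replacement argument is indicated.

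Two smaller points. First, the hypothesis $\mathrm{char}\,\mathbb{F}>3$ is outside the reach of your method: Levi--Malcev conjugacy and the exponentials $u=e^z$, $z\in\mathfrak{n}$, are characteristic-zero tools, and for large $s$ the required divided powers do not exist (Cecil's proof, which works over commutative rings, proceeds by associative/idempotent techniques rather than Lie theory). Second, the nilradical of $U(\mathscr{F})^{(-)}$ is $\mathbb{F}1\oplus\mathfrak{n}$ rather than $\mathfrak{n}$, so the claim $\phi(\mathfrak{n})=\mathfrak{n}$ needs the extra observation that $\mathfrak{n}$ is the intersection of the nilradical with the derived algebra (which itself is delicate when $\mathrm{char}\,\mathbb{F}$ divides the $n_i$). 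For $\mathrm{char}\,\mathbb{F}=0$ and all $n_i\ge 2$ your sketch is plausible, but it does not establish the statement in the generality asserted.
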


\begin{remark}\label{antiaut}
Case (2) in the previous theorem occurs if and only if $U(\mathscr{F})$ is invariant under $\tau$, that is, $n_i=n_{s-i+1}$ for all $i$. It follows that $U(\mathscr{F})$ admits an anti-automorphism only under this condition. Indeed, if $\psi$ is an anti-automorphism of $U(\mathscr{F})$, then $-\psi$ is a Lie automorphism of $U(\mathscr{F})$. Hence, by Theorem \ref{aut_cecil}, we have $-\psi(x)=pxp^{-1}+\mathrm{tr}(xd){1}$ for all $x\in U(\mathscr{F})$ or $n_i=n_{s-i+1}$ for all $i$. However, the first possibility cannot occur if $n>2$, since it would imply that the composition $\psi\,\mathrm{Int}(p^{-1})$, which maps $x\mapsto-x+\mathrm{tr}(xd'){1}$ where $d'$ is the block-diagonal part of $-pdp^{-1}$, is an anti-automorphism of $U(\mathscr{F})$, but this is easily seen not to be the case. (Of course, if $n=2$ then we have  $n_i=n_{s-i+1}$ for all $i$.) 
\end{remark}

As a consequence, we obtain the following analog of Lemma \ref{aut}. {(As usual, the symbol $\rtimes$ denotes a semidirect product in which the second factor acts on the first.)} 

\begin{Lemma}\label{aut_Lie}
If $n>2$ and $n_i=n_{s-i+1}$ for all $i$, then
\[
\mathrm{Aut}(U(\mathscr{F})_0)\simeq\{\mathrm{Int}(x)\mid x\in U(\mathscr{F})^\times\}\rtimes\langle-\tau\rangle;
\]
otherwise, $\mathrm{Aut}(U(\mathscr{F})_0)\simeq\{\mathrm{Int}(x)\mid x\in U(\mathscr{F})^\times\}$. In both cases,
\[
\mathrm{Aut}(U(\mathscr{F})_0)\simeq\mathrm{Stab}_{\mathrm{Aut}(\mathfrak{sl}_n)}(U(\mathscr{F})_0).
\]
\end{Lemma}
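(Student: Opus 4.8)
The plan is to reduce the computation of $\mathrm{Aut}(U(\mathscr{F})_0)$ to the description of $\mathrm{Aut}(U(\mathscr{F})^{(-)})$ already supplied by Theorem \ref{aut_cecil}, exploiting that $U(\mathscr{F})_0$ is simultaneously a complement and a central quotient of $U(\mathscr{F})^{(-)}$. Since $\mathfrak{z}(U(\mathscr{F})^{(-)}) = \mathbb{F}1$ and $U(\mathscr{F})^{(-)} = U(\mathscr{F})_0 \oplus \mathbb{F}1$ with $\mathrm{tr}(1) = n \ne 0$, the projection $\pi \colon U(\mathscr{F})^{(-)} \to U(\mathscr{F})_0$, $x \mapsto x - \frac{\mathrm{tr}(x)}{n}1$, is a Lie homomorphism (the trace of a commutator vanishes) restricting to an isomorphism $U(\mathscr{F})^{(-)}/\mathbb{F}1 \simeq U(\mathscr{F})_0$. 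First I would record that every automorphism of $U(\mathscr{F})^{(-)}$ preserves $\mathbb{F}1$ and hence descends through $\pi$ to an automorphism of $U(\mathscr{F})_0$, while conversely any automorphism $\theta$ of $U(\mathscr{F})_0$ extends to $U(\mathscr{F})^{(-)}$ by fixing $1$ (this is a Lie automorphism because $1$ is central). This yields a surjection $\mathrm{Aut}(U(\mathscr{F})^{(-)}) \to \mathrm{Aut}(U(\mathscr{F})_0)$ whose kernel is exactly the automorphisms acting trivially modulo the center.

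Next I would push Theorem \ref{aut_cecil} through $\pi$. The summand $\mathrm{tr}(xd)1$ lies in $\mathbb{F}1$ and is killed by $\pi$, whereas $\tau$ and $\mathrm{Int}(p)$ (for $p \in U(\mathscr{F})^\times$) preserve both the trace and $U(\mathscr{F})_0$; hence the induced automorphism of $U(\mathscr{F})_0$ is $\mathrm{Int}(p)|_{U(\mathscr{F})_0}$ in case (1) and $(-\tau)\circ\mathrm{Int}(p)|_{U(\mathscr{F})_0}$ in case (2). This identifies $\mathrm{Aut}(U(\mathscr{F})_0)$ with $\{\mathrm{Int}(x) \mid x \in U(\mathscr{F})^\times\}$ together with its $-\tau$-translate whenever case (2) occurs. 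By Remark \ref{antiaut}, case (2) occurs exactly when $n_i = n_{s-i+1}$ for all $i$, and for $n = 2$ one checks that $-\tau$ is already inner, so a genuinely new coset appears only when $n > 2$. To obtain the semidirect-product shape I would verify $(-\tau)^2 = \mathrm{id}$ and the conjugation identity $(-\tau)\,\mathrm{Int}(p)\,(-\tau) = \mathrm{Int}(\tau(p)^{-1})$, which makes the inner part normal and $\langle -\tau\rangle \simeq \mathbb{Z}/2$ act on it, giving the two displayed formulas.

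For the final isomorphism I would study the restriction map $\rho\colon \mathrm{Stab}_{\mathrm{Aut}(\mathfrak{sl}_n)}(U(\mathscr{F})_0) \to \mathrm{Aut}(U(\mathscr{F})_0)$, which is well defined because an automorphism of $\mathfrak{sl}_n$ stabilizing the subalgebra $U(\mathscr{F})_0$ restricts to an automorphism of it. Surjectivity is immediate from the previous step: each $\mathrm{Int}(p)$ with $p \in U(\mathscr{F})^\times$, and the map $-\tau$ (realized as $\mathrm{Int}(w_0)\circ(-(\cdot)^{\mathsf T})$, where $w_0$ is the antidiagonal permutation matrix), are automorphisms of $\mathfrak{sl}_n$ stabilizing $U(\mathscr{F})_0$ and restricting to the generators found above. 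For injectivity I would invoke the classification of $\mathrm{Aut}(\mathfrak{sl}_n)$: a first-kind element $\mathrm{Int}(q)$ restricting to the identity on $U(\mathscr{F})_0$ must centralize it, and since $U(\mathscr{F})_0$ generates $U(\mathscr{F})$ as an associative algebra with center $\mathbb{F}1$, this forces $q \in \mathbb{F}^\times 1$ and $\mathrm{Int}(q) = \mathrm{id}$, exactly as in the proof of Lemma \ref{aut}; a second-kind element cannot restrict to the identity, since its restriction lies in the nontrivial $-\tau$-coset (and when $U(\mathscr{F})$ is not $\tau$-invariant, no second-kind automorphism stabilizes $U(\mathscr{F})_0$ at all).

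The main obstacle I anticipate is this injectivity step, and within it two points: verifying cleanly that $U(\mathscr{F})_0$ generates $U(\mathscr{F})$ as an associative algebra (so that its centralizer in $M_n$ collapses to $\mathbb{F}1$), and ruling out that a flip-type automorphism of $\mathfrak{sl}_n$ could restrict to an inner or trivial automorphism of $U(\mathscr{F})_0$ — a danger confined to the degenerate small-$n$ regime, which is precisely why the hypothesis $n > 2$ enters and must be tracked carefully through the case analysis.
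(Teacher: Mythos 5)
Your proposal is correct and follows essentially the same route as the paper: both reduce to Theorem \ref{aut_cecil} via the decomposition $U(\mathscr{F})^{(-)}=U(\mathscr{F})_0\oplus\mathbb{F}{1}$ (the paper extends $\psi$ by fixing ${1}$ and deduces $\mathrm{tr}(xd)=0$, while you equivalently kill the central term with the projection $\pi$), dispose of $n=2$ by noting $-\tau$ is inner on $\mathfrak{sl}_2$, and settle the stabilizer identification by the centralizer argument of Lemma \ref{aut}. The only nitpick is that the case (2) automorphism induces $\mathrm{Int}(p)\circ(-\tau)$ rather than $(-\tau)\circ\mathrm{Int}(p)$, but since these generate the same coset of the inner automorphisms this does not affect the argument.
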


\begin{proof}
Let $\psi\in\mathrm{Aut}(U(\mathscr{F})_0)$. We extend $\psi$ to an automorphism $\phi$ of $U(\mathscr{F})^{(-)}$ by setting 
$\phi({1})={1}$. By the previous result, $\phi$ must have one of two possible forms. Assume it is the first one: 
\[
\phi(x)=pxp^{-1}+\mathrm{tr}(xd){1},\quad\forall x\in U(\mathscr{F}).
\]
But as $U(\mathscr{F})_0$ is an invariant subspace for $\phi$, we see that, for all $x\in U(\mathscr{F})_0$,
\[
0=\mathrm{tr}(\phi(x))=\mathrm{tr}(pxp^{-1}+\mathrm{tr}(xd){1})=n\,\mathrm{tr}(xd).
\]
Therefore, $\mathrm{tr}(xd)=0$ and hence $\psi(x)=\phi(x)=pxp^{-1}$, for all $x\in U(\mathscr{F})_0$, so $\psi=\mathrm{Int}(p)$. The same argument applies if $\phi$ has the second form. 
Note that, for $n=2$, the second form reduces to the first on $UT(1,1)_0$, since $-\tau$ coincides with $\mathrm{Int}(p)$ on $\mathfrak{sl}_2$, where $p=\mathrm{diag}(1,-1)$.
On the other hand, for $n>2$, the two forms do not overlap, since the action of $-\tau$ differs already on the set of zero-trace diagonal matrices from the action of any inner automorphism.
We conclude the proof in the same way as for Lemma \ref{aut}.
\end{proof}

Let $G$ be an abelian group and define $G^\#=\mathbb{Z}\times G$. 
Similarly to the associative case, we want to relate $G$-gradings on $U(\mathscr{F})_0$ and $G^\#$-gradings on $\mathfrak{sl}_n$, since for the latter a classification of group gradings is known \cite{BK2010} (see also \cite[Chapter 3]{EK2013}). 

Recall that $J_m$ stands for the $m$-th block-diagonal of matrices. We consider again the \emph{natural $\mathbb{Z}$-grading} on $U(\mathscr{F})_0$: its homogeneous component of degree $m\in\mathbb{Z}$ is $J_m\cap U(\mathscr{F})_0$ if $0\le m<s$ and $0$ otherwise. We say that a $G$-grading on $U(\mathscr{F})_0$ is in \emph{canonical form} if, for each $m\in\{0,\ldots,s-1\}$, the subspace $J_m\cap U(\mathscr{F})_0$ is $G$-graded. A $G^\#$-grading on $\mathfrak{sl}_n$ is said to be \emph{admissible} if the coarsening induced by the projection $G^\#\to\mathbb{Z}$ has $U(\mathscr{F})_0$, with its natural $\mathbb{Z}$-grading, as a graded subalgebra. An isomorphism class of $G^\#$-grading on $\mathfrak{sl}_n$ is called \emph{admissible} if it contains an admissible grading.

Since any $\mathbb{Z}$-grading on $\mathfrak{sl}_n$ is the restriction of a unique $\mathbb{Z}$-grading on the associative algebra $M_n$, Lemma \ref{ind_admissible} still holds if we replace $M_n$ by $\mathfrak{sl}_n$. Therefore, every admissible $G^\#$-grading on $\mathfrak{sl}_n$ restricts to $U(\mathscr{F})_0$ and, by means of the projection $G^\#\to G$, yields a $G$-grading on $U(\mathscr{F})_0$, which is clearly in canonical form. 
Conversely, thanks to Lemma \ref{aut_Lie}, if a $G$-grading on $U(\mathscr{F})_0$ is in canonical form then it comes from a unique admissible $G^\#$-grading on $\mathfrak{sl}_n$ in this way. Therefore, similarly to the associative case, we obtain a bijection between admissible $G^\#$-grading on $\mathfrak{sl}_n$ and $G$-gradings on $U(\mathscr{F})_0$ in canonical form. 

The following result is technical and will be proved in Section \ref{commut_supp}:

\begin{Lemma}\label{can_Lie}
For any $G$-grading on $U(\mathscr{F})_0$, there exists an isomorphic $G$-grading in canonical form.
\end{Lemma}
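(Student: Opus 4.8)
The plan is to mirror the associative argument of Lemma \ref{can_assoc}: first show that the whole radical filtration is forced to be $G$-graded, and then remove the remaining ``off-diagonal'' ambiguity by moving the grading, via an inner automorphism, into a position compatible with the block splitting. Throughout I use the correspondence, valid since $\mathbb F$ is algebraically closed of characteristic $0$ and $G$ is abelian, between $G$-gradings $\Gamma$ on $U(\mathscr F)_0$ and homomorphisms $\eta_\Gamma\colon\widehat G\to\mathrm{Aut}(U(\mathscr F)_0)$, under which a subspace is $\Gamma$-graded precisely when it is invariant under the image $Q:=\eta_\Gamma(\widehat G)$.

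First I would verify that \emph{every} automorphism of $U(\mathscr F)_0$ stabilizes each power $J^m$ of the radical, so that $\mathrm{Aut}(U(\mathscr F)_0)$ coincides with the stabilizer of the filtration $U(\mathscr F)_0=J^0\supset J^1\supset\cdots$. By Lemma \ref{aut_Lie} it is enough to treat $\mathrm{Int}(x)$ with $x\in U(\mathscr F)^\times$ and the map $-\tau$. For the former this is immediate: $J^m$ is a two-sided ideal of the associative algebra $U(\mathscr F)$ and $x,x^{-1}\in U(\mathscr F)$, so $xJ^mx^{-1}=J^m$. For the latter, a direct computation with matrix units shows $\tau(J_m)=J_m$ for every $m$ (the flip sends the $m$-th block-superdiagonal to itself), whence $\tau(J^m)=J^m$ and $-\tau(J^m)=J^m$. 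Consequently each $J^m$ is $Q$-invariant, i.e.\ $\Gamma$-graded, so the natural $\mathbb Z$-filtration of $U(\mathscr F)_0$ is graded for \emph{any} $G$-grading.

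Next I would exploit the structure of $\mathrm{Aut}(U(\mathscr F)_0)$ as an algebraic group. Writing $S=J_0$ for the block-diagonal subalgebra and $S_0=J_0\cap U(\mathscr F)_0$, the subgroup $C:=\{\mathrm{Int}(x)\mid x\in S^\times\}\rtimes\langle-\tau\rangle$ is reductive, and each of its elements preserves every $J_m\cap U(\mathscr F)_0$, hence the whole block splitting (block-diagonal conjugation keeps each $(p,q)$-block in place, and $-\tau$ preserves every superdiagonal index and the trace), while $U:=\{\mathrm{Int}(1+j)\mid j\in J\}$ is a unipotent normal subgroup. Since $U(\mathscr F)^\times=(1+J)\rtimes S^\times$ and $\mathrm{Int}$ annihilates the scalars, this exhibits a Levi decomposition $\mathrm{Aut}(U(\mathscr F)_0)=U\rtimes C$. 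The image $Q=\eta_\Gamma(\widehat G)$ is a quasitorus, hence linearly reductive, so by the conjugacy theorem for linearly reductive subgroups into a Levi factor (Mostow's theorem on fully reducible subgroups) there is $u\in 1+J$ with $\mathrm{Int}(u)\,Q\,\mathrm{Int}(u)^{-1}\subseteq C$. Taking $\psi=\mathrm{Int}(u)$, the grading $\psi(\Gamma)$ is isomorphic to $\Gamma$ and its associated image $\eta_{\psi(\Gamma)}(\widehat G)=\psi Q\psi^{-1}$ lies in $C$; therefore each $J_m\cap U(\mathscr F)_0$ is $\psi(\Gamma)$-graded, i.e.\ $\psi(\Gamma)$ is in canonical form.

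The main obstacle is the conjugation step. One must justify that the diagonalizable image $Q$ can be moved into the Levi factor $C$ by an element of the unipotent radical — equivalently, that the $Q$-invariant filtration admits a $Q$-equivariant splitting realized by an \emph{inner} automorphism by $1+J$, so that the resulting grading is genuinely isomorphic to $\Gamma$. This requires care with the reductive (non-semisimple) part $S_0$: a naive Levi--Mal'cev splitting would only make the semisimple part $\bigoplus_i\mathfrak{sl}_{n_i}$ graded, whereas one needs the full block-diagonal $S_0$, including its center, to become graded; one must also accommodate the extra component $-\tau$ present when $n_i=n_{s-i+1}$. This is precisely why the statement is technical and deferred to Section \ref{commut_supp}; it is the Lie-theoretic counterpart of the splitting used, in the associative setting, in the proof of \cite[Lemma 1]{y2018}.
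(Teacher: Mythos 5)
Your proof is correct, but it takes a genuinely different route from the paper's. You work at the level of the automorphism group: you exhibit $\mathrm{Aut}(U(\mathscr{F})_0)=\mathrm{Int}(1+J)\rtimes C$ as a Levi decomposition, with unipotent radical $\mathrm{Int}(1+J)$ and reductive complement $C$ generated by the block-diagonal conjugations and, in the symmetric case, $-\tau$, and then invoke Mostow's conjugacy theorem to move the quasitorus $\eta_\Gamma(\widehat G)$ into $C$, whose elements visibly preserve each $J_m\cap U(\mathscr{F})_0$. This is sound (modulo the standard reduction to $G$ generated by the support, so that $\widehat G$ is an algebraic quasitorus and ``graded'' means ``$\eta_\Gamma(\widehat G)$-invariant''), and it neatly dissolves the difficulty you flag in your last paragraph: the Levi factor of the automorphism group already contains conjugation by the \emph{full} block-diagonal group, not merely its semisimple part, so no separate treatment of the centre of $J_0$ is needed. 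The paper argues instead inside the Lie algebra: it extends the grading to $U(\mathscr{F})^{(-)}$, uses the graded Levi theorem of Gordienko together with Malcev's conjugacy to make $S=\bigoplus_i\mathfrak{sl}_{n_i}$ graded, the known classification of gradings on $UT_r$ to make $\mathfrak{d}$ graded, and then a delicate induction (Proposition \ref{diagonal}) producing semihomogeneous elements $e_i-e_{s-i+1}$ of degree $1_G$ whose $\mathrm{ad}$-operators cut out the block superdiagonals. What your argument buys is brevity and conceptual clarity under the standing hypotheses of Section \ref{Lie_case}; what the paper's buys is generality, since its proof of Lemma \ref{can_Lie} requires neither $G$ abelian nor $\mathbb{F}$ algebraically closed (only characteristic $0$), and the machinery of Proposition \ref{diagonal} is reused to prove the commutativity of the support (Theorem \ref{supp_commutativity}) for arbitrary groups $G$, which the Mostow argument cannot reach.
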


Clearly, as in Lemma \ref{lem2}, if two admissible $G^\#$-gradings on $\mathfrak{sl}_n$ are isomorphic then they induce isomorphic $G$-gradings on $U(\mathscr{F})_0$. The converse is established by the same argument as Lemma \ref{iso_grad2}, using the following analog of Lemma \ref{iso_grad1}:

\begin{Lemma}
If two $G$-gradings, $\Gamma_1$ and $\Gamma_2$, on $U(\mathscr{F})_0$ are in canonical form and isomorphic to one another, then there exists an isomorphism $\psi_0$ between $\Gamma_1$ and $\Gamma_2$ of the form $\psi_0=\mathrm{Int}(x)$ or $\psi_0=-\mathrm{Int}(x)\tau$ where the matrix $x\in U(\mathscr{F})^\times$ is block-diagonal.
\end{Lemma}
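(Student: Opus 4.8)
The plan is to mimic the proof of Lemma \ref{iso_grad1}, the only new feature being the possible appearance of the flip $\tau$ permitted by Lemma \ref{aut_Lie}. I would start from an arbitrary isomorphism $\psi$ between $\Gamma_1$ and $\Gamma_2$. By Lemma \ref{aut_Lie}, $\psi$ is the restriction to $U(\mathscr{F})_0$ of either $\mathrm{Int}(y)$ for some $y\in U(\mathscr{F})^\times$, or (only when $n>2$ and $n_i=n_{s-i+1}$ for all $i$) of $-\mathrm{Int}(y)\tau$. In either case I let $x$ denote the block-diagonal part of $y$, which is again invertible, and set $\psi_0=\mathrm{Int}(x)$ in the first case and $\psi_0=-\mathrm{Int}(x)\tau$ in the second. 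By Lemma \ref{aut_Lie}, such a $\psi_0$ is an automorphism of $U(\mathscr{F})_0$; it remains only to check that it preserves $G$-degree.

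The key observation is that $\tau$ respects the natural $\mathbb{Z}$-grading. Indeed, in the relevant case $n_i=n_{s-i+1}$, so the formula $\tau(E_{ij})=E_{n-j+1,n-i+1}$ sends the $(p,q)$ block to the $(s-q+1,s-p+1)$ block, which lies on the same block-diagonal because $q-p$ is unchanged; hence $\tau(J_m)=J_m$ for every $m$. Consequently both $\mathrm{Int}(y)$ and $-\mathrm{Int}(y)\tau$ preserve the filtration $J^m=\bigoplus_{k\ge m}J_k$. Writing $y=x(1+x^{-1}u)$ with $u=y-x\in J$ and computing modulo $J^{m+1}$, one finds that for any $a\in J_m$ the leading term (the $J_m$-component) of $\psi(a)$ is exactly $\psi_0(a)$, namely $xax^{-1}$ or $-x\tau(a)x^{-1}$, since all corrections coming from $u$ lie in $J^{m+1}$.

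To conclude, I take $a$ that is simultaneously $G$-homogeneous for $\Gamma_1$ and contained in a single $J_m$; such elements span $U(\mathscr{F})_0$ because $\Gamma_1$ is in canonical form. Using $J^m=J_m\oplus J^{m+1}$ together with the fact that both $J_m\cap U(\mathscr{F})_0$ and $J^{m+1}\cap U(\mathscr{F})_0$ are $G$-graded for $\Gamma_2$ (again by canonical form), while $\psi$ preserves $G$-degree, the $J_m$-component $\psi_0(a)$ must be $G$-homogeneous of the same degree as $a$. Hence $\psi_0$ preserves $G$-degree on a spanning set of homogeneous elements, and being an automorphism of $U(\mathscr{F})_0$, it is the desired graded isomorphism between $\Gamma_1$ and $\Gamma_2$.

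The main obstacle I anticipate is the $\tau$-case: one must ensure that the reflection $\tau$ is compatible with the block-filtration, so that the identification of the leading term of $\psi(a)$ with $\psi_0(a)$ survives. This is precisely where the symmetry hypothesis $n_i=n_{s-i+1}$ enters, guaranteeing $\tau(J_m)=J_m$; once this is established, the argument runs in perfect parallel with the associative Lemma \ref{iso_grad1}.
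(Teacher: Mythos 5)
Your proposal is correct and follows essentially the same route as the paper: reduce to the argument of Lemma \ref{iso_grad1} by extracting the block-diagonal part of $y$, and observe that in the case $\psi=-\mathrm{Int}(y)\tau$ the same filtration argument goes through because $\tau$ preserves each $J_m$ (thanks to the symmetry $n_i=n_{s-i+1}$). The paper's proof is just a terser statement of exactly these two points.
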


\begin{proof}
Let $\psi$ be an isomorphism between $\Gamma_1$ and $\Gamma_2$. If $\psi=\mathrm{Int}(y)$ then we are in the situation of the proof of Lemma \ref{iso_grad1}. If $\psi=-\mathrm{Int}(y)\tau$ then the same proof still works because all subspaces $J_m$ are invariant under $\tau$.
\end{proof}

In summary:

\begin{Thm}\label{th:main_Lie}
The mapping of an admissible $G^\#$-grading on $\mathfrak{sl}_n$ to a $G$-grading on $U(\mathscr{F})_0$, given by restriction and coarsening, yields a bijection between the admissible isomorphism classes of $G^\#$-gradings on $\mathfrak{sl}_n$ and the isomorphism classes of $G$-gradings on $U(\mathscr{F})_0$.\qed
\end{Thm}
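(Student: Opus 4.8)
The plan is to establish the bijection exactly as in the associative case, Theorem \ref{th:main_assoc}, factoring the correspondence through gradings in canonical form. Write $\Phi$ for the map that sends the isomorphism class of an admissible $G^\#$-grading $\Gamma^\#$ on $\mathfrak{sl}_n$ to the isomorphism class of the $G$-grading on $U(\mathscr{F})_0$ obtained by restricting $\Gamma^\#$ to $U(\mathscr{F})_0$ and then coarsening along the projection $G^\#\to G$; this restriction is legitimate because admissibility together with the $\mathfrak{sl}_n$-version of Lemma \ref{ind_admissible} makes $U(\mathscr{F})_0=\bigoplus_{m\ge0}(J_m\cap U(\mathscr{F})_0)$ a graded subalgebra. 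I would then verify three things: that $\Phi$ is well defined on isomorphism classes, that it is surjective, and that it is injective.

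Well-definedness is the Lie analog of Lemma \ref{lem2}, already noted to be clear: any isomorphism of two admissible $G^\#$-gradings preserves the $\mathbb{Z}$-coarsening, hence stabilizes $U(\mathscr{F})_0$ by Lemma \ref{ind_admissible} and restricts to an isomorphism of the induced $G$-gradings. For surjectivity, I start from an arbitrary $G$-grading on $U(\mathscr{F})_0$, invoke Lemma \ref{can_Lie} to replace it by an isomorphic grading in canonical form, and then use the bijection established just before the theorem (built on Lemma \ref{aut_Lie}) between admissible $G^\#$-gradings on $\mathfrak{sl}_n$ and canonical-form $G$-gradings on $U(\mathscr{F})_0$: this exhibits the given class in the image of $\Phi$.

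For injectivity, which is the converse of the well-definedness step, I would argue as in Lemma \ref{iso_grad2}. Suppose admissible $G^\#$-gradings $\Gamma_1^\#$ and $\Gamma_2^\#$ induce isomorphic $G$-gradings $\Gamma_1$ and $\Gamma_2$ on $U(\mathscr{F})_0$; passing to canonical form via Lemma \ref{can_Lie}, I may assume both are canonical. The Lie analog of Lemma \ref{iso_grad1} established immediately above then furnishes an isomorphism $\psi_0=\mathrm{Int}(x)$ or $\psi_0=-\mathrm{Int}(x)\tau$ with $x\in U(\mathscr{F})^\times$ block-diagonal. A block-diagonal $\mathrm{Int}(x)$ preserves each $J_m$, and so does $\tau$ (which is exactly why the hypothesis $n_i=n_{s-i+1}$, under which the twisted form arises, causes no difficulty); hence $\psi_0$ preserves the natural $\mathbb{Z}$-grading and is therefore an isomorphism between the restricted $G^\#$-gradings on $U(\mathscr{F})_0$. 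Working over the algebraically closed field of characteristic $0$, where ordinary character groups suffice, this gives $\psi_0\,\eta_1'(\chi)=\eta_2'(\chi)\,\psi_0$ for all $\chi\in\widehat{G^\#}$, where $\eta_i'$ is the $\widehat{G^\#}$-action on $U(\mathscr{F})_0$ attached to $\Gamma_i^\#$. Because admissibility forces the corresponding actions $\eta_i$ on $\mathfrak{sl}_n$ to land in $\mathrm{Stab}_{\mathrm{Aut}(\mathfrak{sl}_n)}(U(\mathscr{F})_0)$ and to restrict to $\eta_i'$, the isomorphism $\mathrm{Aut}(U(\mathscr{F})_0)\simeq\mathrm{Stab}_{\mathrm{Aut}(\mathfrak{sl}_n)}(U(\mathscr{F})_0)$ of Lemma \ref{aut_Lie} lets me transport the intertwining relation back to $\mathfrak{sl}_n$, so $\psi_0$ (extended) is an isomorphism between $\Gamma_1^\#$ and $\Gamma_2^\#$.

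I expect the genuine difficulty to sit not in this assembly but in the two ingredients it consumes. The main obstacle is Lemma \ref{can_Lie}, the existence of a canonical form, which is precisely why it is postponed to Section \ref{commut_supp}: whereas in the associative case Lemma \ref{can_assoc} follows at once from the fact that the Jacobson radical is a single graded ideal, in $U(\mathscr{F})_0$ one cannot recover the block filtration from a single graded ideal and must instead analyze directly how an arbitrary $G$-grading interacts with the block structure. The secondary subtlety, the presence of the extra automorphism $-\tau$ recorded in Lemma \ref{aut_Lie}, affects only the injectivity step, and there it is harmless precisely because $\tau$ respects the block-diagonal decomposition.
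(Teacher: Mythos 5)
Your proposal is correct and follows essentially the same route as the paper: it factors the correspondence through canonical forms via Lemma \ref{can_Lie}, uses the bijection with admissible $G^\#$-gradings built on Lemma \ref{aut_Lie}, and handles injectivity by the block-diagonal (or $-\mathrm{Int}(x)\tau$) isomorphism from the Lie analog of Lemma \ref{iso_grad1} together with the transport of the intertwining relation as in Lemma \ref{iso_grad2}. You also correctly identify Lemma \ref{can_Lie} as the real technical burden deferred to Section \ref{commut_supp}.
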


There are two families of gradings on $\mathfrak{sl}_n$, $n>2$, namely, Type I and Type II. (Only Type I exists for $n=2$.) Their isomorphism classes are stated in Theorem 3.53 of \cite{EK2013}, but we will use Theorem 45 of \cite{BKE2018}, which is equivalent but uses more convenient parameters. 

By definition, a $G^\#$-grading of Type I is a restriction of a $G^\#$-grading on the associative algebra $M_n$, so it is parametrized by $(T,\beta,\kappa)$, where, as in Section \ref{assoc_case}, $T\subset G$ is a finite group, $\beta:T\times T\to\mathbb{F}^\times$ is a non-degenerate alternating bicharacter and $\kappa:\mathbb{Z}\times G/T\to\mathbb{Z}_{\ge0}$ is a function with finite support satisfying $|\kappa|\sqrt{|T|}=n$. 

For a Type II grading, there is a unique element $f\in G^\#$ of order $2$ (hence, in fact, $f\in G$), called the \emph{distinguished element}, such that the coarsening induced by the natural homomorphism $G^\#\to G^\#/\langle f\rangle$ is a Type I grading. The parametrization of Type II gradings depends on the choice of character $\chi$ of $G^\#$ satisfying $\chi(f)=-1$. So, we fix $\chi\in\widehat{G}$ with $\chi(f)=-1$ and extend it trivially to the factor $\mathbb{Z}$. Then, the parameters of a Type II grading are a finite subgroup $T\subset G^\#$ (hence $T\subset G$) containing $f$, an alternating bicharacter $\beta:T\times T\to\mathbb{F}^\times$ with radical $\langle f\rangle$ (so, $\beta$ determines the distinguished element $f$), an element $g^\#_0\in G^\#$, and a function $\kappa:\mathbb{Z}\times G/T\to\mathbb{Z}_{\ge0}$ with finite support satisfying $|\kappa|\displaystyle\sqrt{|T|/2}=n$. These parameters are required to satisfy some additional conditions, as follows. 

To begin with, for a Type II grading, $T$ must be $2$-elementary. Its Type I coarsening is a grading by $G^\#/\langle f\rangle\simeq\mathbb{Z}\times\overline{G}$ with parameters $(\overline{T},\bar{\beta},\kappa)$, where $\overline{T}:=T/\langle f\rangle$ is a subgroup of $\overline{G}:=G/\langle f\rangle$, $\bar{\beta}:\overline{T}\times\overline{T}\to\mathbb{F}^\times$ is the non-degenerate bicharacter induced by $\beta$, and $\kappa$ is now regarded as a function on $\mathbb{Z}\times\overline{G}/\overline{T}\simeq\mathbb{Z}\times G/T$. 

Since $T$ is $2$-elementary, $\beta$ can only take values $\pm 1$ and $\ell:=\sqrt{|T|/2}$ is a power of $2$. If one uses Kronecker products of Pauli matrices (of order $2$) to construct a division grading on $M_\ell$ with support $\overline{T}$ and bicharacter $\bar{\beta}$, then the transposition will preserve degree and thus become an involution on the resulting graded division algebra $D$. The choice of such an involution is arbitrary, and it will be convenient for our purposes to use $\tau$, which also preserves degree. Since all homogeneous components of $D$ are $1$-dimensional, we have 
\[
(X_{\bar{t}})^\tau=\bar{\eta}(\bar{t})X_{\bar{t}},\quad\forall\bar{t}\in\overline{T},\,X_{\bar{t}}\in D_{\bar{t}},
\]
where $\bar{\eta}:\overline{T}\to\{\pm 1\}$ satisfies $\bar{\eta}(\bar{u}\bar{v})=\bar{\beta}(\bar{u},\bar{v})\bar{\eta}(\bar{u})\bar{\eta}(\bar{v})$ for all $\bar{u},\bar{v}\in\overline{T}$. If we regard $\bar{\eta}$ and $\bar{\beta}$ as maps of vector spaces over the field of two elements, this equation means that $\bar{\eta}$ is a quadratic form with polarization $\bar{\beta}$. {Define a quadratic form $\eta:T\to\{\pm 1\}$ with polarization $\beta$ by $\eta(t)=\chi(t)\bar{\eta}(\bar{t})$, where $\bar{t}$ denotes the image of $t\in T$ in the quotient group $\overline{T}$.} 

Recall that a concrete $G^\#/\langle f\rangle$-grading with parameters $(\overline{T},\bar{\beta},\kappa)$ is constructed by selecting a $k$-tuple  of elements of $G^\#/\langle f\rangle$, as directed by $\kappa$, to get an elementary grading on $M_k$, where $k=|\kappa|$, and identifying $M_n\simeq M_k\otimes D$ via Kronecker product. The remaining parameter $g^\#_0$ can then be used, together with the chosen involution $\tau$ on $D$, to define an anti-automorphism $\varphi$ on $M_n$ by the formula
\[
\varphi(X)=\Phi^{-1}X^\tau\Phi,\quad\forall X\in M_n,
\] 
where the matrix $\Phi\in M_k\otimes D\simeq M_k(D)$ is constructed in such a way that $\varphi^2$ acts on $M_n$ in exactly the same way as $\chi^2$, which acts on $M_n$ because it can be regarded as a character on $G^\#/\langle f\rangle$ (since $\chi^2(f)=1$) and $M_n$ is a $G^\#/\langle f\rangle$-graded algebra. As a result, we can split each homogeneous component of the $G^\#/\langle f\rangle$-grading on $M_n$ into (at most $2$) eigenspaces of $\varphi$ so that the action of $\chi$ on the resulting $G^\#$-graded algebra $M_n^{(-)}$ coincides with the automorphism $-\varphi$. Finally, the restriction of this $G^\#$-grading to $\mathfrak{sl}_n$ is a $G^\#$-grading of Type II with parameters $(T,\beta,g^\#_0,\kappa)$.

In order to construct $\Phi$, two conditions must be met: 
\begin{enumerate}
\item[(i)] $\kappa$ is \emph{$g^\#_0$-balanced} in the sense that $\kappa(x)=\kappa((g_0^{\#})^{-1}x^{-1})$ for all $x\in\mathbb{Z}\times G/T$ (where the inverse in $\mathbb{Z}$ is understood with respect to addition);
\item[(ii)] $\kappa(g^\#T)$ is even whenever $g_0^\#(g^\#)^2\in T$ and $\eta(g^\#_0(g^\#)^2)=-1$ for some $g^\#\in G^\#$. 
\end{enumerate}
Such a matrix $\Phi\in M_k(D)$ is given explicitly by Equations (3.29) and (3.30) in \cite{EK2013}, but in relation to the usual transposition. Since we are using $\tau$, the order of the $k$ rows has to be reversed and the entries in $D$ chosen in accordance with the above quadratic form $\bar{\eta}$ rather than the quadratic form in \cite{EK2013}. It will also be convenient in our situation to order the $k$-tuple associated to $\kappa$ in a different way, as will be described below. 

We are only interested in admissible isomorphism classes of $G^\#$-gradings on $\mathfrak{sl}_n$. If $n=2$, the isomorphism condition for (Type I) gradings is the same as in the associative case: all translations of $\kappa$ determine isomorphic gradings. If $n>2$, however, one isomorphism class of Type I gradings on $\mathfrak{sl}_n$ can consist of one or two isomorphism classes of gradings on $M_n$, because $(T,\beta,\kappa)$ and $(T,\beta^{-1},\bar{\kappa})$  determine isomorphic gradings on $\mathfrak{sl}_n$, where the function $\bar{\kappa}:\mathbb{Z}\times G/T\to\mathbb{Z}_{\ge0}$ is defined by $\bar{\kappa}(i,x):=\kappa(-i,x^{-1})$. Hence, the isomorphism class of $G^\#$-gradings of Type I with parameters $(T,\beta,\kappa)$ is admissible if and only if at least one of the functions $\kappa$ and $\bar{\kappa}$ has the form described after Lemma \ref{lem1}. Assuming it is $\kappa$, there must exist $a\in\mathbb{Z}$ and functions $\kappa_1,\ldots,\kappa_s:G/T\to\mathbb{Z}_{\ge0}$ with $|\kappa_i|\sqrt{|T|}=n_i$, such that 
\begin{equation}\label{ref_kappa_seq}
\kappa(a-i,x)=\kappa_i(x),\quad\forall i\in\{1,2,\ldots,s\},\,x\in G/T,
\end{equation}
and $\kappa(a-i,x)=0$ if $i\not\in\{1,2,\ldots,s\}$. Then $\bar{\kappa}$ can be expressed in the same form, but with the function $\bar{\kappa}_i(x):=\kappa_i(x^{-1})$ playing the role of $\kappa_{s-i+1}$ for each $i$. Thus, the isomorphism classes of $G$-gradings of Type I on $U(\mathscr{F})_0$ are parametrized by $(T,\beta,(\kappa_1,\ldots,\kappa_s))$, and, if $n_i=n_{s-i+1}$ for all $i$, then $(T,\beta,(\kappa_1,\ldots,\kappa_s))$ and $(T,\beta^{-1},(\bar{\kappa}_s,\ldots,\bar{\kappa}_1))$ determine isomorphic $G$-gradings on $U(\mathscr{F})_0$. 

Now consider the isomorphism class of Type II gradings on $\mathfrak{sl}_n$ ($n>2$) with parameters $(T,\beta,g^\#_0,\kappa)$. Admissibility is a condition on the $\mathbb{Z}$-grading induced by the projection $G^\#\to\mathbb{Z}$, which factors through the natural homomorphism $G^\#\to G^\#/\langle f\rangle$. So, for this isomorphism class to be admissible, it is necessary and sufficient for $\kappa$ to have the form given by Equation \eqref{ref_kappa_seq}, but with $|\kappa_i|\sqrt{|T|/2}=n_i$.

\begin{Lemma}\label{balance_kappa_i}
If $g^\#_0=(a_0,g_0)$ and $\kappa$ is given by Equation \eqref{ref_kappa_seq}, then $\kappa$ is $g^\#_0$-balanced if and only if $a_0=s+1-2a$ and $\kappa_i(x)=\kappa_{s-i+1}(g_0^{-1}x^{-1})$ for all $x\in G/T$ and all $i$.
\end{Lemma}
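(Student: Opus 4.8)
The plan is to unwind the definition of the $g^\#_0$-balanced condition and translate it into a statement about the individual functions $\kappa_i$ using the explicit form of $\kappa$ given by Equation \eqref{ref_kappa_seq}. Recall that $\kappa$ is $g^\#_0$-balanced means $\kappa(y)=\kappa((g_0^\#)^{-1}y^{-1})$ for all $y\in\mathbb{Z}\times G/T$. Writing $y=(j,x)$ with $j\in\mathbb{Z}$ and $x\in G/T$, and using $g^\#_0=(a_0,g_0)$, the right-hand side becomes $\kappa(-a_0-j,\,g_0^{-1}x^{-1})$, since the group operation on the $\mathbb{Z}$-component is addition and on the $G/T$-component is multiplication. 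So the balanced condition reads
\[
\kappa(j,x)=\kappa(-a_0-j,\,g_0^{-1}x^{-1}),\quad\forall j\in\mathbb{Z},\,x\in G/T.
\]

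Next I would substitute the form of $\kappa$ from Equation \eqref{ref_kappa_seq}. The function $\kappa$ is supported on the integers $j=a-i$ for $i\in\{1,\ldots,s\}$, where $\kappa(a-i,x)=\kappa_i(x)$, and is zero elsewhere in the $\mathbb{Z}$-direction. Evaluating the left-hand side at $j=a-i$ gives $\kappa_i(x)$. For the right-hand side to be able to match this nonzero value, its first argument $-a_0-(a-i)=i-(a_0+a)$ must itself lie in the support, i.e.\ equal $a-i'$ for some $i'\in\{1,\ldots,s\}$; this forces $i'=2a+a_0-i$ and the right-hand side becomes $\kappa_{2a+a_0-i}(g_0^{-1}x^{-1})$. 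The key step is to match the index sets: as $i$ ranges over $\{1,\ldots,s\}$, the reflected index $2a+a_0-i$ must also range over exactly $\{1,\ldots,s\}$ for the supports on the two sides to coincide (otherwise some $\kappa_i$ would be forced to be zero, contradicting $|\kappa_i|\sqrt{|T|/2}=n_i>0$). Comparing the endpoints of the two index ranges yields $2a+a_0=s+1$, that is, $a_0=s+1-2a$, and then the reflected index is $2a+a_0-i=s+1-i$, giving precisely $\kappa_i(x)=\kappa_{s-i+1}(g_0^{-1}x^{-1})$.

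For the converse, I would observe that if $a_0=s+1-2a$ and $\kappa_i(x)=\kappa_{s-i+1}(g_0^{-1}x^{-1})$ hold, then substituting back reproduces the balanced identity on the support, and both sides vanish off the support, so $\kappa$ is indeed $g^\#_0$-balanced. The two directions together establish the equivalence.

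The main obstacle I anticipate is purely bookkeeping: keeping the reflection of the $\mathbb{Z}$-index $a-i\mapsto i-(a_0+a)$ straight while simultaneously tracking that the nonzero-support constraint forces a genuine bijection of the index set $\{1,\ldots,s\}$ onto itself, rather than merely a containment. The subtle point is that one cannot conclude $a_0=s+1-2a$ from a single value of $i$; one needs the positivity $|\kappa_i|>0$ for \emph{every} $i$ to rule out a shift of the index range, and this is where the hypothesis $|\kappa_i|\sqrt{|T|/2}=n_i$ (with each $n_i\ge 1$) is essential. Once the index reflection is pinned down, the $G/T$-component matching $x\mapsto g_0^{-1}x^{-1}$ is immediate from the definition and requires no further work.
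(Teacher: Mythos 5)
Your proof is correct and follows essentially the same route as the paper: the paper sums $\kappa$ over $G/T$ to get a function $\kappa_{\mathbb{Z}}$ on $\mathbb{Z}$ with support $\{a-s,\ldots,a-1\}$ and notes that the induced reflection $j\mapsto -a_0-j$ must preserve this interval, which is exactly your endpoint-matching argument (and, like you, it relies on every $\kappa_i$ being nonzero since $n_i\ge 1$). The remaining identification $\kappa_i(x)=\kappa_{s-i+1}(g_0^{-1}x^{-1})$ and the converse are then the same routine substitution in both versions.
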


\begin{proof}
Consider the function $\kappa_\mathbb{Z}:\mathbb{Z}\to\mathbb{Z}_{\ge 0}$ given by $\kappa_\mathbb{Z}(m)=\sum_{g\in G/T}\kappa(m,g)$. Then the support of $\kappa_\mathbb{Z}$ is $\{a-s,\ldots,a-1\}$. On the other hand, if $\kappa$ is $g^\#_0$-balanced, then $\kappa_\mathbb{Z}$ is $a_0$-balanced{, that is, $\kappa_\mathbb{Z}(i)=\kappa_\mathbb{Z}(-a_0-i)$, for all $i\in\mathbb{Z}$}, which implies $-a_0-(a-s)=a-1$. The result follows.
\end{proof}

Therefore, we can replace the parameters $g^\#_0$ and $\kappa$ by $g_0$ and $(\kappa_1,\ldots,\kappa_s)$. Also, since $g_0^\#(g^\#)^2\notin T$ for any $g^\#=(a-i,g)$ with $s+1\ne 2i$, condition (ii) is automatically satisfied if $s$ is even, and affects only $\kappa_{\frac{s+1}{2}}$ if $s$ is odd. Hence, we can restate conditions (i) and (ii) in terms of $\kappa_1,\ldots,\kappa_s$ as follows:

\begin{enumerate}
\item[(i')] $\kappa_i(x)=\kappa_{s-i+1}(g_0^{-1}x^{-1})$ for all $x\in G/T$ and all $i$;
\item[(ii')] either $s$ is even or $s$ is odd and $\kappa_{\frac{s+1}{2}}(gT)$ is even whenever $g_0g^2\in T$ and $\eta(g_0g^2)=-1$ for some $g\in G$. 
\end{enumerate}

Note that condition (i') implies that $n_i=|\kappa_i|\ell=|\kappa_{s-i+1}|\ell=n_{s-i+1}$, so Type II gradings on $U(\mathscr{F})_0$ can exist only if $n_i=n_{s-i+1}$ for all $i$, as expected from the structure of the automorphism group (see Lemma \ref{aut_Lie}).

Let us describe explicitly a Type II grading on $U(\mathscr{F})_0$ in the isomorphism class parametrized by $(T,\beta,g_0,(\kappa_1,\ldots,\kappa_s))$. For each $1\le i<\frac{s+1}{2}$, we fill two $|\kappa_i|$-tuples, $\gamma_i$ and $\gamma_{s-i+1}$, simultaneously as follows, going from left to right in $\gamma_i$ and from right to left in $\gamma_{s-i+1}$. For each coset $x\in G/T$ that lies in the support of $\kappa_i$, we choose an element $g\in x$ and place $\kappa_i(x)$ copies of $g$ into $\gamma_i$ and as many copies of $g_0^{-1}g^{-1}$ into $\gamma_{s-i+1}$. If $s$ is odd, we fill the middle $|\kappa_i|$-tuple $\gamma_i$, with $i=\frac{s+1}{2}$, in the following manner: $\gamma_i$ will be the concatenation of (possibly empty) tuples $\gamma^\triangleleft$, $\gamma^+$, $\gamma^0$, $\gamma^-$ and $\gamma^\triangleright$ (in this order), where $\gamma^\triangleleft$ and $\gamma^+$ are to be filled from left to right, $\gamma^-$ and $\gamma^\triangleright$ from right to left, and $\gamma^0$ in any order. For each $x$ in the support of $\kappa_i$, we choose an element $g\in x$. If $g_0g^2\notin T$, we place $\kappa_i(x)$ copies of $g$ into $\gamma^\triangleleft$ and as many copies of $g_0^{-1}g^{-1}$ into $\gamma^\triangleright$. If $g_0g^2\in T$ and $\eta(g_0g^2)=-1$, we place $\frac12\kappa_i(x)$ copies of $g$ in each of $\gamma^+$ and $\gamma^-$. Finally, if $g_0g^2\in T$ and $\eta(g_0g^2)=1$, we place $\kappa_i(x)$ copies of $g$ into $\gamma^0$. Concatenating these $\gamma_1,\ldots,\gamma_s$ results in a $k$-tuple $\gamma=(g_1,\ldots,g_k)$ of elements of $G$. Taking them modulo $\langle f\rangle$, we define a $\overline{G}$-grading on $M_k$ and, consequently, on $M_n\simeq M_k\otimes D$, so $M_n=\bigoplus_{\bar{g}\in\overline{G}}R_{\bar{g}}$. Then we construct a matrix $\Phi\in M_k(D)\simeq M_k\otimes D$ as follows:
\begin{equation}\label{Phi}
\begin{split}
\Phi&=\mathrm{diag}(\chi(g_1^{-1})I_\ell,\ldots,\chi(g_p^{-1})I_\ell)\oplus\mathrm{diag}(X_{\bar{g}_0\bar{g}_{p+1}^2},\ldots,
X_{\bar{g}_0\bar{g}_{p+q}^2})\\
&\oplus\widetilde{\mathrm{diag}}(X_{\bar{g}_0\bar{g}_{p+q+1}^2},\ldots,X_{\bar{g}_0\bar{g}_{k-p-q}^2})\\
&\oplus\mathrm{diag}(-X_{\bar{g}_0\bar{g}_{k-p-q+1}^2},\ldots,-X_{\bar{g}_0\bar{g}_{k-p}^2})\oplus\mathrm{diag}(\chi(g_{k-p+1}^{-1})I_\ell,\ldots,\chi(g_k^{-1})I_\ell),
\end{split}
\end{equation}
where $p$ is the sum of the lengths of $\gamma_1,\ldots,\gamma_{\lfloor\frac{s}2\rfloor}$, and $\gamma^\triangleleft$, $q$ is the length of $\gamma^+$, and $\widetilde{\mathrm{diag}}$ denotes arrangement of entries along the secondary diagonal (from left to right).
Finally, we use $\Phi$ to define a $G$-grading on $M_n^{(-)}$:
\begin{equation}\label{Type2Lie}
M_n^{(-)}=\bigoplus_{g\in G}R_g\text{ where }R_g=\{X\in R_{\bar{g}}\mid\Phi^{-1}X^\tau\Phi=-\chi(g)X\},
\end{equation}
which restricts to the desired grading on $U(\mathscr{F})_0$.
 
Thus we obtain the following classification of $G$-gradings on $U(\mathscr{F})_0$ from our Theorem \ref{th:main_Lie} and the known classification for $\mathfrak{sl}_n$ (as stated in \cite[Theorem 45]{BKE2018} and \cite[Theorem 3.53]{EK2013}).

\begin{Cor}\label{cor:main_Lie}
Every grading on $U(\mathscr{F})_0$ by an abelian group $G$ is isomorphic either to a Type I grading with parameters $(T,\beta,(\kappa_1,\ldots,\kappa_s))$, where $|\kappa_i|=n_i\sqrt{|T|}$, or to a Type II grading with parameters $(T,\beta,g_0,(\kappa_1,\ldots,\kappa_s))$, where $|\kappa_i|\sqrt{|T|/2}=n_{i}$ and $T$ is $2$-elementary. Type II gradings can occur only if $n>2$ and $n_i=n_{s-i+1}$ for all $i$, and their parameters are subject to the conditions (i') and (ii') above. Moreover, gradings of Type I are not isomorphic to gradings of Type II, and within each type we have the following:
\begin{enumerate}
\item[(I)] $(T,\beta,(\kappa_1,\ldots,\kappa_s))$ and $(T',\beta',(\kappa_1',\ldots,\kappa_s'))$ determine the same isomorphism class if and only if $T'=T$ and there exists $g\in G$ such that either $\beta'=\beta$ and $\kappa_i'=g\kappa_i$ for all $i$, or $n>2$, $\beta'=\beta^{-1}$ and $\kappa_i'=g\bar{\kappa}_{s-i+1}$ for all $i$, where $\bar{\kappa}(x):=\kappa(x^{-1})$ for all $x\in G/T$.
\item[(II)] $(T,\beta,g_0,(\kappa_1,\ldots,\kappa_s))$ and $(T',\beta',g_0',(\kappa_1',\ldots,\kappa_s'))$ determine the same isomorphism class if and only if $T'=T$, $\beta'=\beta$, and there exists $g\in G$ such that $g'_0=g^{-2}g_0$ and $\kappa'_i=g\kappa_i$ for all $i$.\qed
\end{enumerate}
\end{Cor}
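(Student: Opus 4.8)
The plan is to derive the corollary from Theorem \ref{th:main_Lie}, which identifies isomorphism classes of $G$-gradings on $U(\mathscr{F})_0$ with admissible isomorphism classes of $G^\#$-gradings on $\mathfrak{sl}_n$, and then to feed in the known classification of the latter (\cite[Theorem 45]{BKE2018}, equivalently \cite[Theorem 3.53]{EK2013}). Since Type I and Type II are disjoint families of $G^\#$-gradings on $\mathfrak{sl}_n$ and the bijection of Theorem \ref{th:main_Lie} respects them, a Type I grading on $U(\mathscr{F})_0$ can never be isomorphic to a Type II one. It therefore remains, separately within each type, to (a) rewrite the admissibility condition as a condition on the tuple $(\kappa_1,\dots,\kappa_s)$ and (b) push the $G^\#$-level isomorphism criterion down to a $G$-level criterion.

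For Type I, I would start from the form \eqref{ref_kappa_seq}, which records an admissible class as $(T,\beta,(\kappa_1,\dots,\kappa_s))$ together with an offset $a\in\mathbb{Z}$. At the level of $\mathfrak{sl}_n$ the isomorphism freedom is translation by an arbitrary $g^\#=(b,g)\in G^\#$ together with, when $n>2$, the flip $(T,\beta,\kappa)\mapsto(T,\beta^{-1},\bar\kappa)$. First I would split the translation into its two components: the $\mathbb{Z}$-part merely shifts $a$ and so is invisible in $(\kappa_1,\dots,\kappa_s)$, while the $G$-part acts uniformly as $\kappa_i\mapsto g\kappa_i$. Then I would check that $\bar\kappa$ is again of admissible form and that its blocks satisfy $\bar\kappa_{s-i+1}(x)=\kappa_i(x^{-1})$, as already observed after \eqref{ref_kappa_seq}; combining this with a $G$-translation yields the second alternative $\beta'=\beta^{-1}$, $\kappa_i'=g\bar\kappa_{s-i+1}$. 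The $n=2$ case is settled by recalling (as in the proof of Lemma \ref{aut_Lie}) that $-\tau$ is inner on $\mathfrak{sl}_2$, so no flip is available and only the first alternative survives.

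For Type II, the admissibility analysis again uses the form \eqref{ref_kappa_seq}, now with $|\kappa_i|\sqrt{|T|/2}=n_i$, and Lemma \ref{balance_kappa_i} converts the balance and parity conditions (i) and (ii) into (i') and (ii'); in particular it forces $a_0=s+1-2a$, so that the remaining data reduce to $g_0\in G$ and $(\kappa_1,\dots,\kappa_s)$. The $\mathfrak{sl}_n$-level criterion permits translation by $g^\#=(b,g)$ with $g_0'^\#=(g^\#)^{-2}g_0^\#$ and $\kappa'=g^\#\kappa$. Splitting again into $\mathbb{Z}$- and $G$-parts, the $\mathbb{Z}$-component only re-shifts $a$ and $a_0$ compatibly and drops out, while the $G$-component gives exactly $g_0'=g^{-2}g_0$ and $\kappa_i'=g\kappa_i$. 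Because $T$ is $2$-elementary we have $\beta=\beta^{-1}$, which is why no $\beta$-flip alternative appears in part (II).

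I expect the main obstacle to be the bookkeeping around the Type I flip: one must verify that the duality $\kappa\mapsto\bar\kappa$ preserves admissibility while reversing the order of the blocks as $\kappa_i\mapsto\bar\kappa_{s-i+1}$, and that combining it with a $G$-translation realizes precisely the extra isomorphisms coming from the outer element $-\tau$, which is present exactly when $n>2$ and $n_i=n_{s-i+1}$ for all $i$. The other delicate point is disentangling the simultaneous $\mathbb{Z}$- and $G$-translations so that, after fixing the offsets $a$ (and $a_0$), the residual freedom is a single $g\in G$ acting uniformly on all blocks rather than an independent shift in each block.
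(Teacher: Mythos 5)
Your proposal is correct and follows essentially the same route as the paper: the corollary is obtained by combining Theorem \ref{th:main_Lie} with the known classification of $G^\#$-gradings on $\mathfrak{sl}_n$, rewriting admissibility via Equation \eqref{ref_kappa_seq}, using Lemma \ref{balance_kappa_i} to eliminate the $\mathbb{Z}$-components of $g_0^\#$ and of the translation, and tracking the Type I flip $\kappa\mapsto\bar\kappa$ as the block reversal $\kappa_i\mapsto\bar\kappa_{s-i+1}$. The points you flag as delicate (the flip's interaction with admissibility and the disentangling of the $\mathbb{Z}$- and $G$-translations) are exactly the ones the paper handles in the discussion preceding the corollary.
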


\section{Commutativity of the grading group}\label{commut_supp}
Our immediate goal is to prove Lemma \ref{can_Lie}. The arguments will work without assuming a priori that the grading group is abelian, and, in fact, our second goal will be to prove that the elements of the support of any group grading on $U(\mathscr{F})_0$ must commute with each other. It will be more convenient to make computations in $U(\mathscr{F})^{(-)}$. So, suppose $U(\mathscr{F})^{(-)}$ is graded by an arbitrary group $G$. We still assume that $\mathrm{char}\,\mathbb{F}=0$, but $\mathbb{F}$ need not be algebraically closed. 

Write $U(\mathscr{F})=\bigoplus_{1\le i\le j\le s}B_{ij}$, where each $B_{ij}$ is the set of matrices with non-zero entries only in the $(i,j)$-th block. Thus, $J_m=B_{1,m+1}\oplus B_{2,m+2}\oplus\cdots\oplus B_{s-m,s}$ for all $m\in\{0,1,\ldots,s-1\}$. It is important to note that $[J_1,J_{m}]=J_{m+1}$ and hence the Lie powers of the Jacobson radical $J=\bigoplus_{m>0}J_m$ coincide with its associative powers.

Let $e_i\in B_{ii}$ be the identity matrix of each diagonal block and let 
\[
\mathfrak{d}=\mathrm{Span}\{e_1,e_2,\ldots,e_s\}.
\] 
We can write $B_{ii}=\mathfrak{s}_i\oplus \mathbb{F}e_i$, where $\mathfrak{s}_i=[B_{ii},B_{ii}]\simeq\mathfrak{sl}_{n_i}$. Let $S=\bigoplus_{i=1}^s\mathfrak{s}_{i}$ and $R=\mathfrak{d}\oplus J$. Then $U(\mathscr{F})^{(-)}=S\oplus R$ is a Levi decomposition.

We will need the following graded version of Levi decomposition, which was established in \cite{PRZ2013} and then improved in \cite{Gord2016} by weakening the conditions on the ground field:

\begin{Thm}[{\cite[Corollaries 4.2 and 4.3]{Gord2016}}]\label{thm_gord}
Let $L$ be a finite-dimensional Lie algebra over a field $\mathbb{F}$ of characteristic $0$, graded by an arbitrary group $G$. 
Then the radical $R$ of $L$ is graded and there exists a maximal semisimple subalgebra $B$ such that $L=B\oplus R$ (direct sum of graded subspaces).\qed
\end{Thm}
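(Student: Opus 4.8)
The plan is to prove the two assertions in turn: that the solvable radical $R$ is a graded subspace, and that a maximal semisimple subalgebra $B$ can be chosen graded. For the first I would use the Killing form $\kappa$ of $L$. The key point is that $\kappa$ interacts well with the grading: if $x\in L_g$ and $y\in L_h$, then $\mathrm{ad}\,x\circ\mathrm{ad}\,y$ carries each $L_k$ into $L_{ghk}$, and such an operator has trace $0$ unless $gh=e$ (the identity of $G$); hence $\kappa(L_g,L_h)=0$ whenever $gh\ne e$, so $\kappa$ pairs $L_g$ only with $L_{g^{-1}}$. In characteristic $0$ one has the standard description $R=[L,L]^{\perp}$, the orthogonal complement taken with respect to $\kappa$. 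Since $[L,L]$ is automatically graded and $\kappa$ pairs only inverse degrees, an element $x=\sum_g x_g$ annihilates $[L,L]$ if and only if each $x_g$ does (the pairing of $x$ with $[L,L]\cap L_h$ sees only the component $x_{h^{-1}}$). Therefore $[L,L]^{\perp}=\bigoplus_g\bigl([L,L]^{\perp}\cap L_g\bigr)$ is graded, and this argument makes no use of commutativity of $G$.

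For the second assertion I would reduce, by induction on the derived length of $R$, to the case that $R$ is abelian: a graded Levi complement of $L/R^{(d-1)}$ exists by induction (note $R^{(d-1)}$ is graded, being a derived term of the graded ideal $R$), and its preimage reduces the problem to a graded extension $0\to R\to L\to S\to 0$ with $R$ abelian and $S:=L/R$ graded semisimple. Fixing an ordinary (ungraded) Levi complement with section $\sigma_0:S\to L$ of the graded projection $\pi$, I would pass to the homogeneous component $\sigma:=\sigma_0^{(e)}$ of degree $e$; because $\pi$ is homogeneous of degree $e$ and $\pi\sigma_0=\mathrm{id}_S$, the map $\sigma$ is still a linear section and is now graded, with image a graded subspace complementary to $R$. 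The obstruction to $\sigma$ being a homomorphism is the $2$-cocycle $\omega(x,y)=[\sigma x,\sigma y]-\sigma[x,y]\in R$, and the task is to produce a graded linear $\rho:S\to R$ with $d\rho=\omega$, for then $\sigma-\rho$ cuts out the desired graded $B$. When $G$ is abelian this is immediate: the Chevalley--Eilenberg complex computing $H^\bullet(S,R)$ is itself $G$-graded with degree-preserving differential, Whitehead's lemma gives $H^2(S,R)=0$, and its component of degree $e$ supplies the graded $\rho$ (and $H^1(S,R)=0$ removes the remaining ambiguity).

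I expect the genuine difficulty to be exactly this last step for a non-abelian $G$, where the monoidal category of $G$-graded spaces is no longer symmetric and the differential fails to preserve the space of degree-$e$ cochains: the module term of $d$ multiplies the acting degree on one side while the relevant argument sits in the interior, so spurious components in degrees $gh$ and $hg$ appear. The one structural tool I would lean on here is the elementary identity $[L_g,L_h]=0$ whenever $gh\ne hg$, valid in any graded Lie algebra because the bracket then lies in $L_{gh}\cap L_{hg}=0$; it forces $\omega$ to be supported on commuting pairs of degrees and, more importantly, makes the actions $S_g\cdot R_h$ vanish on non-commuting pairs, which is what should tame the spurious degrees. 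Turning this observation into a proof that a graded $\rho$ exists — equivalently, that a Levi complement has a graded conjugate under the unipotent group $\exp(\mathrm{ad}\,N)$, with $N$ the (graded) nilradical — is the technical heart of the matter, and is where I would expect to need the full force of the arguments of \cite{PRZ2013} and \cite{Gord2016}.
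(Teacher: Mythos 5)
This statement is not proved in the paper at all: it is imported from \cite{Gord2016} (Corollaries 4.2 and 4.3), as the citation in the theorem header and the absence of a proof environment indicate. So your attempt can only be measured against the cited literature. Your argument for the first assertion is complete and correct: the trace computation giving $\kappa(L_g,L_h)=0$ unless $gh=e$ is valid for an arbitrary group $G$, the characterization $R=[L,L]^{\perp}$ holds in characteristic $0$, and since the Killing form pairs $L_g$ only with $L_{g^{-1}}$, the orthogonal complement of the graded subspace $[L,L]$ is indeed graded. That is a clean, self-contained proof that the radical is graded.

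The genuine gap is in the second assertion, and you have located it yourself. The reduction to abelian $R$ by induction on the derived length, the passage to the degree-$e$ component of a linear section, and the identification of the obstruction $\omega$ as a degree-preserving $2$-cocycle are all sound, and for \emph{abelian} $G$ the graded Chevalley--Eilenberg complex together with Whitehead's lemma does produce the graded primitive $\rho$. But the theorem is stated for an arbitrary group, and this generality is exactly what the paper needs: Corollary \ref{Levi} is invoked in Section \ref{commut_supp} \emph{before} commutativity of the support is known --- indeed, Theorem \ref{supp_commutativity} (that the support generates an abelian subgroup) is one of the paper's main results, so assuming $G$ abelian here would be circular for that purpose. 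For non-abelian $G$ the Chevalley--Eilenberg differential does not preserve the subspace of degree-preserving cochains, so ``take the degree-$e$ component of a primitive of $\omega$'' is unavailable, and your proposed remedy via $[L_g,L_h]=0$ when $gh\ne hg$ remains a heuristic rather than an argument. Deferring that step to ``the full force of'' \cite{PRZ2013} and \cite{Gord2016} means the difficult half of the statement --- the existence of a graded Levi complement for an arbitrary grading group --- is assumed, not proved.
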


\begin{Cor}\label{Levi}
Consider any $G$-grading on $U(\mathscr{F})^{(-)}$. Then the ideal $R$ is graded. Moreover, there exists an isomorphic $G$-grading on $U(\mathscr{F})^{(-)}$ such that $S$ is also graded.
\end{Cor}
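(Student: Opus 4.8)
The plan is to derive this corollary from the graded Levi decomposition of Theorem \ref{thm_gord} together with the structure of the radical $R=\mathfrak{d}\oplus J$ and the semisimple part $S=\bigoplus_i\mathfrak{s}_i$. First I would apply Theorem \ref{thm_gord} to the $G$-graded Lie algebra $U(\mathscr{F})^{(-)}$: this immediately yields that the solvable radical is graded and that there exists a graded maximal semisimple subalgebra $B$ with $U(\mathscr{F})^{(-)}=B\oplus R$ (as graded subspaces). The first point to verify is that the solvable radical of $U(\mathscr{F})^{(-)}$ is exactly $R=\mathfrak{d}\oplus J$; this is a standard structural fact, since $J$ is the nilpotent (Jacobson) radical, $\mathfrak{d}$ is abelian, $R$ is solvable, and the quotient $U(\mathscr{F})^{(-)}/R\simeq S$ is semisimple. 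This gives the first assertion of the corollary, that $R$ is graded.

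For the second assertion, the issue is that Theorem \ref{thm_gord} produces \emph{some} graded semisimple subalgebra $B$, but not necessarily our chosen $S$; I would instead produce an isomorphic grading under which the specific subalgebra $S$ is graded. The key is the conjugacy of Levi subalgebras: both $B$ and $S$ are maximal semisimple subalgebras of $U(\mathscr{F})^{(-)}$, so by Malcev's theorem there exists an inner automorphism, of the form $\exp(\mathrm{ad}\,z)$ with $z$ in the nilradical $J$, carrying $B$ onto $S$. More precisely, I would take $\varphi=\mathrm{Int}(u)$ for a suitable unipotent $u=1+z$ with $z\in J$ such that $\varphi(B)=S$. Transporting the given $G$-grading through $\varphi$ yields an isomorphic $G$-grading on $U(\mathscr{F})^{(-)}$ under which $S=\varphi(B)$ is graded. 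Since $\varphi$ also preserves $R$ (as $R$ is an ideal, indeed the radical, and automorphisms preserve the radical), the radical $R$ remains graded in the new grading as well.

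The main obstacle I anticipate is the careful handling of the conjugating automorphism: I must ensure that the automorphism taking $B$ to $S$ can be realized as $\mathrm{Int}(u)$ for an invertible $u\in U(\mathscr{F})^\times$, so that it is an honest isomorphism of the \emph{graded associative-based Lie structure} and lands within the automorphism group described in Lemma \ref{aut_Lie}. By the Malcev--Harish-Chandra theorem, the automorphism conjugating one Levi factor to another is $\exp(\mathrm{ad}\,z)$ for some $z$ in the nilradical; here $z\in J$, and $\exp(\mathrm{ad}\,z)=\mathrm{Int}(\exp z)$ with $\exp z=1+z+\tfrac{z^2}{2}+\cdots\in U(\mathscr{F})^\times$ a finite sum since $z$ is nilpotent and $\mathrm{char}\,\mathbb{F}=0$. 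Thus $\varphi=\mathrm{Int}(\exp z)$ lies in $\mathrm{Aut}(U(\mathscr{F})^{(-)})$ and restricts to $U(\mathscr{F})_0$, and transporting the grading by $\varphi^{-1}$ gives an isomorphic grading making $S$ graded. One should also note that since $\varphi$ fixes the decomposition type and the nilradical, the grading on $R$ is genuinely inherited, completing the proof.
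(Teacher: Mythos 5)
Your proof is correct and follows essentially the same route as the paper: apply the graded Levi decomposition of Theorem \ref{thm_gord} to get $R$ graded and a graded Levi factor $B$, then use Malcev's theorem to conjugate $B$ onto $S$ by an inner automorphism and transport the grading. The extra details you supply (identifying $R$ as the radical, realizing the conjugating automorphism as $\mathrm{Int}(\exp z)$ with $z\in J$) are correct elaborations of what the paper leaves implicit.
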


\begin{proof}
By Theorem \ref{thm_gord}, there exists a graded Levi decomposition $U(\mathscr{F})^{(-)}=B\oplus R$. But $U(\mathscr{F})^{(-)}=S\oplus R$ is another Levi decomposition, so, by Malcev's Theorem (see e.g. \cite[Corollary 2 on p.~93]{Jac1979}), there exists an (inner) automorphism $\psi$ of $U(\mathscr{F})^{(-)}$ such that $\psi(B)=S$.  Applying $\psi$ to the given $G$-grading on $U(\mathscr{F})^{(-)}$, we obtain a new $G$-grading on $U(\mathscr{F})^{(-)}$ with respect to which $S$ is graded.
\end{proof}

\begin{Lemma}\label{part_diag}
For any $G$-grading on $U(\mathscr{F})^{(-)}$, there exists an isomorphic $G$-grading such that the subalgebras $\mathfrak{d}$ and $S$ are graded.
\end{Lemma}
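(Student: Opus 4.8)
The plan is to refine the grading provided by Corollary \ref{Levi}, where $S=\bigoplus_i \mathfrak{s}_i$ is already graded, so that the diagonal subalgebra $\mathfrak{d}=\mathrm{Span}\{e_1,\ldots,e_s\}$ becomes graded as well, while keeping $S$ graded. The key observation is that the $e_i$ are distinguished inside $R=\mathfrak{d}\oplus J$ by their action on the semisimple part $S$: each $e_i$ is the unique element of $\mathfrak{d}$ acting as the identity on the simple summand $\mathfrak{s}_i\simeq\mathfrak{sl}_{n_i}$ and as $0$ on the other summands (via the adjoint action, since $[e_i,\mathfrak{s}_j]=0$ for $j\ne i$ and $e_i$ centralizes $B_{ii}$ only modulo $\mathfrak{s}_i$ — more precisely $e_i$ acts trivially on each $\mathfrak{s}_j$ because it is a scalar on the $i$-th block and zero elsewhere). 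So I would analyze the adjoint action of $R$ on the graded subalgebra $S$ and use it to locate a graded complement to $J$ inside $R$ that coincides with $\mathfrak{d}$.

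Concretely, first I would note that $J$, being the nilradical (the nilpotent radical of $U(\mathscr{F})^{(-)}$, which equals $[R,R]$ together with the action considerations), is graded: it can be characterized intrinsically, for instance as the set of $x\in R$ acting nilpotently in the adjoint representation, or as $[U(\mathscr{F})^{(-)},R]\cap R$, and such characterizations are automatically respected by any automorphism and hence by the grading. Since $\mathfrak{d}\cong R/J$ as a vector space, the grading on $R$ descends to a grading on the abelian quotient $R/J$, and I want to lift this to a graded copy of $\mathfrak{d}$ inside $R$ that is simultaneously a subalgebra commuting correctly with $S$. The idea is that $S\oplus\mathfrak{d}=\bigoplus_i B_{ii}$ is the centralizer-type object: $\mathfrak{d}$ is recovered as the centralizer of $S$ inside the reductive part $S\oplus\mathfrak{d}$, or equivalently each $e_i$ spans the one-dimensional space of elements acting as a fixed scalar on $\mathfrak{s}_i$ and annihilating the rest.

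Here is the step I would actually carry out. Because $S$ is graded, its centralizer $C=\{x\in U(\mathscr{F})^{(-)}\mid [x,S]=0\}$ is a graded subspace. A direct computation shows $C=\mathfrak{d}\oplus(\text{something in }J)$; in fact $C\cap(S\oplus\mathfrak{d})=\mathfrak{d}$, because an element of $B_{ii}$ commuting with $\mathfrak{s}_i\cong\mathfrak{sl}_{n_i}$ must be a scalar on the $i$-th block. To kill the unwanted $J$-part, I would either intersect $C$ with a graded complement of $J$ obtained from the graded decomposition $R=\mathfrak{d}_{\mathrm{gr}}\oplus J$, or apply a further inner automorphism $\mathrm{Int}(1+j)$ with $j\in J$ graded (degree-preserving) that straightens $C$ onto $\mathfrak{d}$. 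Since $C$ is graded and $C$ projects isomorphically onto $R/J\cong\mathfrak{d}$, there is a graded section, and conjugating by a suitable unipotent element of $U(\mathscr{F})^\times$ lying in $J+\mathbb{F}1$ moves this section exactly onto $\mathfrak{d}$ without disturbing $S$; the element implementing this can be taken graded (homogeneous of degree $e$) because the discrepancy between the graded section and $\mathfrak{d}$ lies in the graded space $J$.

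The main obstacle I anticipate is the last straightening step: showing that the graded complement to $J$ inside the graded centralizer $C$ can be conjugated onto $\mathfrak{d}$ by an automorphism that (a) is inner of the form $\mathrm{Int}(x)$ with $x\in U(\mathscr{F})^\times$, (b) fixes $S$ setwise, and (c) preserves the $G$-grading. Point (c) is the delicate one: I would need the conjugating element to be built from homogeneous components of degree $1_G$ so that $\mathrm{Int}(x)$ commutes with the grading. The homogeneity should follow from the fact that both $\mathfrak{d}$ and the graded section of $C$ give the same image in the graded quotient $R/J$, so their difference is a graded map $\mathfrak{d}\to J$ of degree-zero type, which can be exponentiated to a degree-preserving inner automorphism. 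Verifying that this inner automorphism genuinely lands in $\mathrm{Int}(U(\mathscr{F})^\times)$ and fixes each $\mathfrak{s}_i$ is where the careful bookkeeping will lie, but no deep new idea beyond the graded Levi decomposition and the intrinsic characterization of $\mathfrak{d}$ as a centralizer should be required.
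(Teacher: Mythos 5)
Your opening move---assume $S$ is graded via Corollary \ref{Levi} and pass to the centralizer of $S$---is exactly the paper's, but there is a genuine gap in your ``straightening'' step, and it sits precisely where the real difficulty of the lemma lives. The centralizer $\mathrm{C}_R(S)$ equals $\mathfrak{d}$ only when at most one diagonal block has size $1$. If $r\ge 2$ of the blocks have $n_i=1$ (in the extreme case $n_1=\dots=n_s=1$ one has $S=0$ and the centralizer is everything), then $\mathrm{C}_R(S)\simeq\mathbb{F}^{s-r}\times UT_r^{(-)}$ contains a whole copy of the strictly upper triangular matrices, and the grading can genuinely mix $\mathfrak{d}$ into that copy. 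Your plan is to choose a graded complement to $J\cap\mathrm{C}_R(S)$ and conjugate it onto $\mathfrak{d}$ by a unipotent element that is homogeneous of degree $1_G$, via an inner automorphism that preserves the $G$-grading. Both of these specific claims already fail for $UT(1,1)$: take the $\mathbb{Z}_2$-grading with $L_{\bar 0}=\mathrm{Span}\{{1},\,e_1-e_2+E_{12}\}$ and $L_{\bar 1}=\mathbb{F}E_{12}$. The graded complement to $J$ is $L_{\bar 0}$; since $\mathrm{Int}(1+\lambda E_{12})(e_1-e_2)=e_1-e_2-2\lambda E_{12}$, the straightening element is forced to be $1+\tfrac12 E_{12}$, whose non-identity part is homogeneous of degree $\bar 1\ne\bar 0$, and $\mathrm{Int}(1+\tfrac12E_{12})$ does not preserve the grading---it transports it to the distinct (isomorphic) grading in which $\mathfrak{d}$ is graded. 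Your requirement (c) is in fact neither achievable nor needed: one only has to push the grading forward along the automorphism, not commute with it.

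More seriously, even after dropping (c) you must still produce, inside the graded algebra $\mathrm{C}_R(S)$, a graded complement to $J\cap\mathrm{C}_R(S)$ that is an abelian subalgebra of semisimple elements; only then is it conjugate to $\mathfrak{d}$ by a unipotent element centralizing $S$. A graded vector-space section of $\mathrm{C}_R(S)\to\mathrm{C}_R(S)/(J\cap\mathrm{C}_R(S))$ always exists, but nothing in your argument forces it to be closed under the bracket, let alone toral. Establishing that the induced grading on $\mathrm{C}_R(S)/\mathfrak{z}(\mathrm{C}_R(S))\simeq (UT_r)_0$ can be brought, by an automorphism, to a form in which the diagonal is graded is exactly the main structural theorem on group gradings of the Lie algebra of upper triangular matrices; the paper does not reprove it but imports it from \cite{pkfy2017} (then observes that the automorphism may be taken inner because $-\tau$ fixes the diagonal, and extends it to an inner automorphism of $U(\mathscr{F})$ fixing $S$). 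Your proposal treats this step as routine bookkeeping, but it is the essential input, and without it the argument does not close.
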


\begin{proof}
We partition $\{1,\ldots,s\}=\{i_1,\ldots,i_r\}\cup\{j_1,\ldots,j_{s-r}\}$ so that $n_{i_k}=1$ and $n_{j_k}>1$. Denote $e_{\triangle}=\sum_{k=1}^r e_{i_k}$, then $e_{\triangle}U(\mathscr{F})e_{\triangle}\simeq UT_r$, the algebra of upper triangular matrices (if $r>0$).

By Corollary \ref{Levi}, we may assume that $S$ is graded. Then its centralizer in $R$, $N:=\mathrm{C}_R(S)$, is a graded subalgebra. It coincides with $\mathrm{Span}\{e_{j_1},\ldots,e_{j_t}\}\oplus e_{\triangle}U(\mathscr{F})e_{\triangle}$, and its center (which is also graded) coincides with $\mathrm{Span}\{e_{j_1},\ldots,e_{j_t},e_{\triangle}\}$. 
If $r=0$, then $N=\mathfrak{d}$ and we are done. Assume $r>0$. Then we obtain a $G$-grading on $N/\mathfrak{z}(N)\simeq UT_r^{(-)}/\mathbb{F} {1}\simeq (UT_r)_0$.
These gradings were classified in \cite{pkfy2017}, where it was shown that, after applying an automorphism of $UT_r^{(-)}$, the subalgebra of diagonal matrices in $UT_r^{(-)}$ is graded. Since $-\tau$ preserves this subalgebra, we may assume that the automorphism in question is inner. But an inner automorphism of $e_{\triangle}U(\mathscr{F})e_{\triangle}$ can be extended to an inner automorphism of $U(\mathscr{F})$. {Indeed, let $y$ be an invertible element of $e_{\triangle}U(\mathscr{F})e_{\triangle}$.} Then $x=\sum_{k=1}^{s-r} e_{j_k}+y\in U(\mathscr{F})^\times$ and $\mathrm{Int}(x)$ extends $\mathrm{Int}(y)$. Moreover, $\mathrm{Int}(x)$ preserves $S$. Therefore, we may assume that the subalgebra of diagonal matrices in $N/\mathfrak{z}(N)$ is graded. But the inverse image of this subalgebra in $N$ is precisely $\mathfrak{d}$, so $\mathfrak{d}$ is graded.
\end{proof}

It will be convenient to use the following technical concept:

\begin{Def}
Let $L$ be a $G$-graded Lie algebra. We call $x\in L$ \textit{semihomogeneous} if $x=x_h+x_z$, with $x_h$ homogeneous and $x_z\in\mathfrak{z}(L)$. If $x_h\notin\mathfrak{z}(L)$, we define the \emph{degree} of $x$ as $\deg x_h$ and denoted it by $\deg x$.
\end{Def}

An important observation is that if $x$ and $y$ are semihomogeneous and $[x,y]\ne 0$, then $[x,y]$ is homogeneous of degree $\deg x\deg y$ (as $[x,y]$ will coincide with $[x_h,y_h]$).

\begin{Prop}\label{diagonal}
For any $G$-grading on $U(\mathscr{F})^{(-)}$, there exists an isomorphic $G$-grading with the following properties: 
\begin{enumerate}
\item[(i)] the subalgebras $\mathfrak{s}_{k}+\mathfrak{s}_{s-k+1}$ are graded,
\item[(ii)] the elements $e_k-e_{s-k+1}$ ($k\ne\frac{s+1}{2}$) are semihomogeneous of degree $1_G$, and
\item[(iii)] the elements $e_k+e_{s-k+1}$ are semihomogeneous of degree $f$ (if $s>2$), where $f\in G$ is an element of order at most $2$. 
\end{enumerate}
\end{Prop}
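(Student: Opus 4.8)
The plan is to begin from the reduction already supplied by Lemma \ref{part_diag}: after replacing the given grading by an isomorphic one, I may assume that both $\mathfrak{d}$ and $S=\bigoplus_k\mathfrak{s}_k$ are graded, and I claim that this single grading already satisfies (i)--(iii), so no further automorphism is applied (note that block-diagonal inner automorphisms fix every $e_k$ with $n_k\ge 2$, so the degrees of the $e_k$ cannot, and need not, be adjusted). The two recurring tools will be the graded $S$-module $J/J^2=\bigoplus_{i=1}^{s-1}B_{i,i+1}$, which is graded because $J=[R,R]$ and $J^2=[J,J]$ are graded ideals, and the description of $\mathrm{Aut}(U(\mathscr{F})^{(-)})$ in Theorem \ref{aut_cecil}: the only automorphisms that mix distinct diagonal blocks have the form $-\mathrm{Int}(p)\tau$, and they exist only when $n_i=n_{s-i+1}$ for all $i$ (cf. Remark \ref{antiaut}). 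It is precisely the minus sign here that will distinguish $e_k-e_{s-k+1}$ from $e_k+e_{s-k+1}$.

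For (i), I would examine how the grading groups the simple ideals $\mathfrak{s}_k$ (those with $n_k\ge 2$) into minimal graded ideals. The key point is that adjacency in the chain $1-2-\cdots-s$ is detected in $J/J^2$, \emph{not} in $J$: while $\mathfrak{s}_i$ and $\mathfrak{s}_j$ act jointly on $B_{\min(i,j),\max(i,j)}$ for \emph{all} $i\ne j$, they act jointly on an irreducible constituent of $J/J^2$ if and only if $|i-j|=1$ (the shared constituent being $B_{i,i+1}$). Inside a single minimal graded ideal the factors are permuted transitively by the symmetries of the graded-simple structure, and these symmetries carry constituents of the graded module $J/J^2$ to constituents and so preserve this adjacency relation. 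Since the only adjacency-preserving symmetries of a finite chain form the group $\{\mathrm{id},\,k\mapsto s-k+1\}$ of order at most $2$, transitivity forces every orbit to have size at most $2$ and, if of size $2$, to be a mirror pair $\{k,s-k+1\}$. Hence $\mathfrak{s}_k+\mathfrak{s}_{s-k+1}$ is graded (it is $0$ or a single $\mathfrak{s}_{s-k+1}$ when $n_k=1$), and, as a by-product, a homogeneous subspace of $J/J^2$ can only mix a block $B_{i,i+1}$ with its mirror $B_{s-i,s-i+1}$.

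For (ii) and (iii), I would compute $\mathrm{ad}(D)$ on $J/J^2$ for $D=\sum_k c_k e_k\in\mathfrak{d}$: on $B_{i,i+1}$ it acts by the scalar $c_i-c_{i+1}$. The combination $e_k-e_{s-k+1}$ is $\tau$-antisymmetric and one checks that its scalars are \emph{constant} on each mirror-orbit $\{B_{i,i+1},B_{s-i,s-i+1}\}$; by the confinement of homogeneous subspaces to mirror-orbits from (i), $\mathrm{ad}(e_k-e_{s-k+1})$ then preserves $G$-degree. Since $\mathbb{F}1$ is the only part of $\mathfrak{d}$ acting trivially on $J/J^2$, this yields that $e_k-e_{s-k+1}$ is semihomogeneous of degree $1_G$, proving (ii). In contrast, $e_k+e_{s-k+1}$ is $\tau$-symmetric, its scalars are \emph{opposite} on the two halves of a mirror-orbit, so it cannot have degree $1_G$ unless the corresponding pair is not swapped; where a swap occurs, the swap sends $e_k\mapsto -e_{s-k+1}$ (the minus from $-\mathrm{Int}(p)\tau$), fixing $e_k-e_{s-k+1}$ and negating $e_k+e_{s-k+1}$, so the latter lands in a nontrivial degree $f$. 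That $f^2=1_G$ follows because $\mathrm{ad}(e_k+e_{s-k+1})^2$ has eigenvalues in $\{0,1\}$ and hence preserves degree.

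I expect the main obstacle to be proving that a \emph{single} involution $f$ serves all swapped pairs and all $k$ at once, rather than a priori distinct elements $f_k$, together with the fact that each $e_k+e_{s-k+1}$ is genuinely of one degree modulo the center. The natural mechanism is the connecting blocks $B_{k,k+1}$, which are acted on by simple factors belonging to two different mirror pairs and thus force the associated swap degrees to agree; propagating this coincidence along the whole chain should pin down a common $f$. A secondary technical point is the size-one diagonal blocks ($n_k=1$, where $\mathfrak{s}_k=0$): for these the relevant data lives in the triangular corner $e_{\triangle}U(\mathscr{F})e_{\triangle}\simeq UT_r$, and I would appeal to the classification of gradings on $UT_r^{(-)}$ from \cite{pkfy2017} to see that their diagonal idempotents obey the same $\pm$ pattern with the same $f$.
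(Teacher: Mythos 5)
Your reduction via Lemma \ref{part_diag} and your scalar computation of $\mathrm{ad}(e_k\pm e_{s-k+1})$ on $J/J^2$ are fine as far as they go: in particular, once one knows that each $B_{i,i+1}\oplus B_{s-i,s-i+1}$ is a graded subspace of $J/J^2$, your deduction that $e_k-e_{s-k+1}$ is semihomogeneous of degree $1_G$ is correct. But the load-bearing step --- that the simple factors inside a minimal graded ideal of $S$ are ``permuted transitively by the symmetries of the graded-simple structure'' and that ``these symmetries carry constituents of the graded module $J/J^2$ to constituents'' --- is not justified, and I do not see how to justify it at the level of generality required. The proposition is proved for an \emph{arbitrary} group $G$ (this is essential for Theorem \ref{supp_commutativity}), so the grading is not realized by an action of $\widehat{G}$ by automorphisms, and Theorem \ref{aut_cecil} cannot be invoked to conclude that the relevant ``symmetries'' preserve or mirror the chain: there is no automorphism of $U(\mathscr{F})^{(-)}$ in the picture. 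A permutation of the simple summands of a minimal graded ideal of $S$ is combinatorial data internal to $S$; it does not act on $J/J^2$, so the adjacency relation you read off from $J/J^2$ is not a priori preserved by it. The same objection applies to your treatment of (iii), where ``the swap sends $e_k\mapsto -e_{s-k+1}$ (the minus from $-\mathrm{Int}(p)\tau$)'' again presupposes an automorphism that has not been produced. (If $G$ were assumed abelian and $\mathbb{F}$ algebraically closed of characteristic $0$, your argument would essentially collapse to ``every automorphism in the image of $\eta_\Gamma$ preserves or flips the blocks,'' which is true but is exactly the shortcut the paper must avoid here.)

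Two further gaps. First, you yourself flag that distinct mirror pairs might a priori receive distinct involutions $f_k$, and your proposed fix (``propagating along the chain'') is only a heuristic; the paper spends Claims 3 and 4 of its proof on precisely this point, comparing the degree of $e_1+e_s$ with that of $e_k+e_{s-k+1}$ by acting on carefully chosen graded modules such as $([e_1-e_s,J^2]+J^3)/J^3$. Second, the blocks with $n_k=1$ are not covered by your adjacency argument (since $\mathfrak{s}_k=0$ they do not appear in $S$ at all), and deferring to the classification for $UT_r$ does not handle the case $n_k=1$, $n_{s-k+1}>1$, where the claimed $\pm$ pattern couples a size-one block to a large one. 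For comparison, the paper proceeds by induction on $s$: it first shows that $N=B_{11}\oplus B_{ss}\oplus\mathbb{F}1\oplus J$ is graded (via centralizers of powers of $J$ and annihilators of graded modules), extracts from this that $e_1+e_s$ is semihomogeneous of some degree $f$ with $f^2=1_G$ and that $e_1-e_s$ has degree $1_G$, and then passes to $eU(\mathscr{F})e$ with $e=1-(e_1+e_s)$ --- an approach that never needs to know in advance how the grading partitions the simple ideals.
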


\begin{proof}
By Lemma \ref{part_diag}, we may assume that $S$ and $\mathfrak{d}$ are graded subalgebras. Also note that $J=[R,R]$ and all of its powers are graded ideals. We proceed by induction on $s$. If $s=1$, then $\mathfrak{s}_1=S$ is graded and there is nothing more to prove. If $s=2$, then $\mathfrak{s}_{1}\oplus\mathfrak{s}_{2}=S$ is graded. Also,  $\mathrm{Span}\{e_1,e_2\}=\mathfrak{d}$ and $e_1+e_2={1}$ is central, so $e_1-e_2$ is a semihomogeneous element. Its degree must be equal to $1_G$, because $[e_1-e_2,x]=2x$ for any $x\in J=B_{12}$. Now assume $s>2$. 

\textbf{Claim 1}: $N:=B_{11}\oplus B_{ss}\oplus\mathbb{F}{1}\oplus J$ is graded.

First suppose $s\ge 4$. Consider $J^{s-2}=J_{s-2}\oplus J_{s-1}$ (the three blocks in the top right corner) and the graded ideal $C:=\mathrm{C}_R(J^{s-2})=R\cap\mathrm{C}_{U(\mathscr{F})^{(-)}}(J^{s-2})$. It is easy to see that
\[
C=\mathrm{Span}\{e_2,\ldots,e_{s-1}\}\oplus\mathbb{F}{1}\oplus B_{23}\oplus\cdots\oplus B_{s-2,s-1}\oplus J^2.
\]
Now, the adjoint action induces on $C/J^2$ a natural structure of a graded $U(\mathscr{F})^{(-)}$-module, and one checks that  $N=\mathrm{Ann}_{U(\mathscr{F})^{(-)}}(C/J^2)+J$, so $N$ is graded.

If $s=3$, then consider $J^2=J_2=B_{13}$ and the graded ideal $\tilde{C}:=\mathrm{C}_{U(\mathscr{F})^{(-)}}(J^2)$. One checks that
\[
\tilde{C}=B_{22}\oplus\mathbb{F}{1}\oplus J,
\]
and hence $N=\mathrm{Ann}_{U(\mathscr{F})^{(-)}}(\tilde{C}/J)$. This completes the proof of Claim 1.

It follows that $S\cap N=\mathfrak{s}_{1}\oplus\mathfrak{s}_{s}$ is a graded subalgebra, and 
\[
I_1:=\mathfrak{d}\cap N=\mathrm{Span}\{e_1,e_s,{1}\}
\]
is graded as well.  Hence, $\mathrm{C}_{I_1}(J^{s-1})=\mathrm{Span}\{e_1+e_s,{1}\}$ is graded, so we conclude that $e_1+e_s$ is semihomogeneous. Denote its degree by $f$.

\textbf{Claim 2}: $f^2=1_G$ and $e_1-e_s$ is semihomogeneous of degree $1_G$.

Since $I_1/\mathbb{F}{1}$ is spanned by the images of $e_1$ and $e_s$, there must exists a semihomo\-ge\-neous linear combination $\tilde{e}$ of $e_1$ and $e_s$ that is not a scalar multiple of $e_1+e_s$. Consider the graded $I_1$-module $J^{s-2}/J^{s-1}$. As a module, it is isomorphic to $B_{1,s-1}\oplus B_{2,s}$, where ${1}$ acts as $0$, $e_1$ as the identity on the first summand and $0$ on the second, and $e_s$ as $0$ on the first and the negative identity on the second. Using this isomorphism, we will write the elements $x\in J^{s-2}/J^{s-1}$ as $x=x_1+x_2$ with $x_1\in B_{1,s-1}$ and $x_2\in B_{2,s}$. Since the situation is symmetric in $e_1$ and $e_s$, we may assume without loss of generality that $\tilde{e}=e_1+\alpha e_s$, $\alpha\ne 1$. Pick a homogeneous element $x=x_1+x_2$ with $x_1\ne 0$. First, we observe that $(e_1+e_s)\cdot((e_1+e_s)\cdot x)=x$, which implies $f^2=1_G$. If $x_2=0$, then $\tilde{e}\cdot x=(e_1+e_2)\cdot x=x$, and this implies that the semihomogeneous elements $\tilde{e}$ and $e_1+e_2$ both have degree $1_G$, which proves the claim. If $\alpha=0$, then $\tilde{e}\cdot x=x_1-\alpha x_2=x_1$ is homogeneous and we can apply the previous argument. So, we may assume that $\alpha\ne 0$. 

Suppose for a moment that we have $\deg\tilde{e}=1_G$. If $\alpha=-1$, we are done. Otherwise, we can consider the homogeneus element $0\ne x+\alpha^{-1}\tilde{e}\cdot x\in B_{1,s-1}$ and apply the previous argument again. 

It remains to prove that $\deg\tilde{e}=1_G$. Denote this degree by $g$ and assume $g\ne 1_G$. Considering
\[
D:=\mathrm{Span}\{x,\tilde{e}\cdot x,\tilde{e}\cdot(\tilde{e}\cdot x),\ldots\},
\]
we see, on the one hand, that $\dim D\le2$, because $D\subset\mathrm{Span}\{x_1,x_2\}$. On the other hand, non-zero homogeneous elements of distinct degrees are linearly independent, so the order of $g$ does not exceed $2$. By our assumption, it must be equal to $2$. Then $x$ and $\tilde{e}\cdot x$ form a basis of $D$ and $y:=\tilde{e}\cdot(\tilde{e}\cdot x)$ has the same degree as $x$. Therefore, $y=\lambda x$ for some $\lambda\ne0$. On the other hand, $y=x_1+\alpha^2 x_2$, hence $\alpha=\pm1$. The case $\alpha=1$ is excluded, whereas $\alpha=-1$ implies $\tilde{e}\cdot x=x$, which contradicts $g\ne 1_G$. The proof of Claim 2 is complete.

We have established all assertions of the proposition for $k=1$. We are going to use the induction hypothesis for $k>1$. To this end, let $e:={1}-(e_1+e_s)$ and consider $eU(\mathscr{F})e\simeq UT(n_2,\ldots,n_{s-1})$. Observe that the operator $\mathrm{ad}(e_1-e_s)$ on $U(\mathscr{F})^{(-)}$ preserves degree and acts as $0$ on $B_{11}\oplus eU(\mathscr{F})e\oplus B_{ss}$, as the identity on the blocks $B_{12},\ldots,B_{1,s-1}$ and $B_{2s},\ldots,B_{s-1,s}$ and as $2$ times the identity on $B_{1s}$. It follows that 
\begin{align*}
T_1:&=\Big(\mathrm{id}-\frac12\mathrm{ad}(e_1-e_s)\Big)\Big(\mathrm{id}-\mathrm{ad}(e_1-e_s)\Big)U(\mathscr{F})^{(-)}\\
&=B_{11}\oplus eU(\mathscr{F})e\oplus B_{ss},
\end{align*}
is a graded subspace.
Hence, $L_1:=\mathrm{C}_{T_1}(J^{s-1})=\mathbb{F}(e_1+e_s)\oplus eU(\mathscr{F})e$ is graded and we can apply the induction hypothesis to $L_1/\mathbb{F}(e_1+e_s)\simeq UT(n_2,\ldots,n_{s-1})$. Therefore, for $1<k\le\frac{s+1}{2}$, the subalgebras $\mathbb{F}(e_1+e_s)\oplus(\mathfrak{s}_k+\mathfrak{s}_{s-k+1})\subset L_1$ are graded, the elements $e_k+e_{s-k+1}$ are semi\-homo\-ge\-neous of degree $f'$ in $L_1$ (if $s>4$), and the elements $e_k-e_{s-k+1}$ ($k\ne\frac{s+1}{2}$) are semihomogeneous of degree $1_G$ in $L_1$. {For the subalgebras, we can get rid of the unwanted term $\mathbb{F}(e_1+e_s)$ by passing to the derived algebra, so we conclude that $\mathfrak{s}_k+\mathfrak{s}_{s-k+1}$ are graded. For the elements, since $\mathfrak{z}(L_1)=\mathbb{F}(e_1+e_s)\oplus\mathbb{F}1$, we also have to get rid of $\mathbb{F}(e_1+e_s)$ before we can conclude that they are semihomogeneous in $U(\mathscr{F})^{(-)}$.}

\textbf{Claim 3}: $e_k+e_{s-k+1}$ are semihomogeneous of degree $f$ in $U(\mathscr{F})^{(-)}$.

If $s=3$, then $e_2={1}-(e_1+e_3)$ is semihomogeneous of degree $f$. If $s=4$, then $e_2+e_{s-1}={1}-(e_1+e_s)$ is semihomogeneous of degree $f$. So, assume $s>4$. By the above paragraph, we know there exist $\alpha_k$ such that $\alpha_k(e_1+e_s)+e_k+e_{s-k+1}$ are semihomogeneous of degree $f'$ in $U(\mathscr{F})^{(-)}$. If $\alpha_2=0$, then pick a non-zero homogeneous element $x\in J^{s-2}/J^{s-1}$. Since $(e_1+e_s)\cdot x=-(e_2+e_{s-1})\cdot x\ne 0$, we conclude that $f=f'$ and the claim follows, because we can subtract the scalar multiples of $e_1+e_s$ from the elements $\alpha_k(e_1+e_s)+e_k+e_{s-k+1}$. If $\alpha_2\ne0$, consider instead the graded  $U(\mathscr{F})^{(-)}$-module $([e_1-e_s,J^2]+J^3)/J^3$. As a module, it is isomorphic to $B_{13}\oplus B_{s-2,s}$, so $e_2+e_{s-1}$ annihilates it. Picking a non-zero homogeneous element $x$, we get 
\[
(\alpha_2(e_1+e_s)+e_2+e_{s-1})\cdot x=\alpha_2(e_1+e_s)\cdot x\ne0, 
\]
so again $f=f'$ and the claim follows. 

\textbf{Claim 4}: $e_k-e_{s-k+1}$ are semihomogeneous of degree $1_G$ in $U(\mathscr{F})^{(-)}$.

We know there exist $\alpha'_k$ such that $\alpha'_k(e_1+e_s)+e_k-e_{s-k+1}$ are semihomogeneous of degree $1_G$ in $U(\mathscr{F})^{(-)}$. If $f=1_G$, then we can subtract the scalar multiples of $e_1+e_s$, so we are done. If $f\ne 1_G$, we want to prove that $\alpha'_k=0$. By way of contradiction, assume $\alpha'_k\ne 0$. If $k<\frac{s}{2}$, then $e_k-e_{s-k+1}$ annihilates the graded module $([e_1-e_s,J^k]+J^{k+1})/J^{k+1}$, so, using the argument in the proof of Claim~3, we conclude that $\deg(e_1+e_s)=1_G$, a contradiction. It remains to consider the case $s=2k$. If $s>4$, then $e_{s/2}-e_{s/2+1}$ annihilates the graded module $([e_1-e_s,J]+J^2)/J^2$, which is isomorphic to $B_{12}\oplus B_{s-1,s}$, so the same argument works. If $s=4$, then $e_2-e_3$ does not annihilate this module, but acts on it as the negative identity. Picking a non-zero homogeneous element $x$, we get
\[
x+(\alpha'_2(e_1+e_s)+e_2-e_3)\cdot x=\alpha'_2(e_1+e_s)\cdot x\ne 0,
\]
so again $\deg(e_1+e_s)=1_G$, a contradiction.

The proof of the proposition is complete.
\end{proof}

\begin{proof}[Proof of Lemma \ref{can_Lie}]
We extend a given $G$-grading on $U(\mathscr{F})_0$ to $U(\mathscr{F})^{(-)}$ by defining the degree of ${1}$ an arbitrarily. Then $U(\mathscr{F})_0\simeq U(\mathscr{F})^{(-)}/\mathbb{F}{1}$ as a graded algebra.
By Lemma \ref{part_diag}, we may assume that $\mathfrak{d}$ and $S$ are graded, hence the subalgebra $J_0=\mathfrak{d}\oplus S$ and  its homomorphic image $J_0/\mathbb{F}{1}\simeq J_0\cap U(\mathscr{F})_0$ in $U(\mathscr{F})_0$ are graded. (In fact, by Proposition \ref{diagonal}, we can say more: every subalgebra $B_{ii}+B_{s-i+1}+\mathbb{F}{1}$ is graded.) To deal with $J_m$ for $m>0$, we will use the semihomogeneous elements $d_i:=e_i-e_{s-i+1}$ of degree $1_G$ ($i\ne \frac{s+1}{2}$). Fix $i<j$. If $i+j\ne s+1$, then 
\[
B_{ij}\oplus B_{s-j+1,s-i+1}=\mathrm{ad}(d_i-d_j)\mathrm{ad}(d_i)\mathrm{ad}(d_j)U(\mathscr{F})^{(-)},
\]
which is a graded subspace. If $i+j=s+1$, then 
\[
B_{ij}=(\mathrm{id}-\mathrm{ad}(d_i))\mathrm{ad}(d_i)J^{s-i+1}
\]
is graded. Thus, $B_{ij}+B_{s-j+1,s-i+1}$ is graded for all $i<j$, hence so is $J_m$.
\end{proof}

Now, we proceed to prove that the support of any $G$-grading on $U(\mathscr{F})_0$ is a \emph{commutative subset} of $G$ in the sense that its elements commute with each other. The key observation is that, if $x$ and $y$ are homogeneous elements in any $G$-graded Lie algebra and $[x,y]\ne 0$, then $\deg x$ must commute with $\deg y$. By induction, one can generalize this as follows: if $x_1,\ldots,x_k$ are homogeneous and $[\ldots[x_1,x_2],\ldots,x_k]\ne 0$ then the degrees of $x_i$ must commute pair-wise. This fact was used to show that the support of any graded-simple Lie algebra is commutative (see e.g. \cite[Proposition 2.3]{PRZ2013} or the proof of Proposition 1.12 in \cite{EK2013}). We will need the following two lemmas.

\begin{Lemma}\label{cyclic}
Suppose a semidirect product of Lie algebras $V\rtimes L$ is graded by a group $G$ in such a way that both the ideal $V$ and the subalgebra $L$ are graded. Assume that the support of $L$ is commutative and, as an $L$-module, $V$ is faithful and generated by a single homogeneous element. Then the support of $V\rtimes L$ is commutative.
\end{Lemma}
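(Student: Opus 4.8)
The plan is to show that the whole support $\mathrm{Supp}(V\rtimes L)=\mathrm{Supp}(V)\cup\mathrm{Supp}(L)$ (the equality holding because $V\rtimes L=V\oplus L$ as graded spaces) is contained in a single abelian subgroup of $G$, which of course forces it to be commutative. Throughout I will use the basic fact recalled just before the lemma: if two homogeneous elements $x,y$ of a $G$-graded Lie algebra satisfy $[x,y]\neq 0$, then $\deg x$ and $\deg y$ commute, since $[x,y]$ is then homogeneous of degree both $\deg x\,\deg y$ and $\deg y\,\deg x$. The relevant brackets here are those between $L$ and $V$, which are exactly the module action: for homogeneous $l\in L$ and $v\in V$ we have $[l,v]=l\cdot v\in V_{\deg l\,\deg v}$, because $V$ is a graded ideal.

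Let $v_0$ be the homogeneous generator of $V$, set $g:=\deg v_0$, and let $H:=\langle\mathrm{Supp}(L)\rangle$, which is abelian by hypothesis. The first step is to show that $\mathrm{Supp}(V)\subseteq Hg$. Since $L$ is graded, it is spanned by homogeneous elements, and since $V$ is cyclic over $L$, it is spanned by iterated actions $l_1\cdot(l_2\cdots(l_k\cdot v_0))$ with each $l_i\in L$ homogeneous. By the previous paragraph such an element, if nonzero, is homogeneous of degree $\deg(l_1)\cdots\deg(l_k)\,g\in Hg$. Hence every nonzero homogeneous component of $V$ lies in a degree belonging to $Hg$, which proves the containment.

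The crux is the second step: I claim $g$ commutes with every $b\in\mathrm{Supp}(L)$. Pick a nonzero homogeneous $l\in L_b$. By faithfulness, $l$ acts nontrivially on $V$, and since the action is graded we may choose a \emph{homogeneous} $v\in V$ with $[l,v]=l\cdot v\neq 0$. Then $b=\deg l$ commutes with $\deg v$, and by the first step $\deg v=hg$ for some $h\in H$. Writing out $b(hg)=(hg)b$ and using $bh=hb$ (both elements lie in the abelian group $H$) yields $bg=gb$, as claimed. Since $H$ is generated by $\mathrm{Supp}(L)$, it follows that $g$ commutes with all of $H$.

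Finally, the subgroup $\langle H,g\rangle$ is abelian: its elements have the form $hg^n$ and multiply commutatively precisely because $g$ centralizes $H$ and $H$ is abelian. As $\mathrm{Supp}(L)\subseteq H$ and $\mathrm{Supp}(V)\subseteq Hg$, the entire support $\mathrm{Supp}(V\rtimes L)$ lies in $\langle H,g\rangle$ and is therefore commutative. I expect the only delicate point to be the crux step, where faithfulness is exactly what guarantees that each degree $b\in\mathrm{Supp}(L)$ is ``seen'' acting nontrivially on some homogeneous element of $V$, thereby linking $b$ to $g$; the cyclic generation by a single homogeneous element is what confines all degrees of $V$ to the single coset $Hg$, so that this one commutation relation propagates to the whole support.
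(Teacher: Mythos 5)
Your proof is correct and follows essentially the same route as the paper's: show $\mathrm{Supp}\,V\subseteq Hg$ where $H=\langle\mathrm{Supp}\,L\rangle$, then use faithfulness to find, for each homogeneous $l\in L$, a homogeneous $v\in V$ with $[l,v]\ne 0$, and deduce that $g$ commutes with $H$. The only difference is that you spell out the verification that $\mathrm{Supp}\,V\subseteq Hg$ via iterated actions on the homogeneous generator, which the paper leaves implicit.
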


\begin{proof}
Let $v$ be a homogeneous generator of $V$ as an $L$-module and let $g=\deg v$. Denote by $H$ the abelian subgroup generated by $\mathrm{Supp}\,L$. Then $\mathrm{Supp}\,V$ is contained in the coset $Hg$. In particular, the subgroup generated by $\mathrm{Supp}\,(V\rtimes L)$ is also generated by $H$ and $g$, so it is sufficient to prove that $g$ commutes with all elements of $\mathrm{Supp}\,L$. Let $a\ne 0$ be a homogeneous element of $L$. Since $V$ is faithful, there exists a homogeneous element $w\in V$ such that $a\cdot w\ne 0$. But, in the semidirect product,  $a\cdot w=[a,w]$, hence $\deg a$ and $\deg w$ commute. Since $\deg a\in H$, $\deg w\in Hg$, and $H$ is abelian, we conclude that $\deg a$ commutes with $g$. 
\end{proof}

{
\begin{Lemma}\label{cyclic2}
Let $L=L_1\times\cdots\times L_k$ and suppose the semidirect product $V\rtimes L$ is graded by a group $G$ in such a way that $V$ and each subalgebra $L_i$ are graded. Assume that $V$ is graded-simple as an $L$-module and, for each $i$, $\mathrm{Supp}\,L_i$ is commutative and $V$ is faithful as an $L_i$-module. Then the support of $V\rtimes L$ is commutative.
\end{Lemma}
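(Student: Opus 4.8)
The plan is to imitate the proof of Lemma \ref{cyclic}, except that the commutativity of $\mathrm{Supp}\,L$ is now something to be established rather than assumed. Since $V$ is graded-simple as an $L$-module, the graded submodule generated by any nonzero homogeneous element is all of $V$; so I would fix a homogeneous generator $v$ and set $g=\deg v$. Writing $H=\langle\mathrm{Supp}\,L\rangle$ and using $V=U(L)\cdot v$, every homogeneous element of $V$ is a combination of terms $X\cdot v$ with $X$ homogeneous in $U(L)$, so its degree is $\deg X\cdot g\in Hg$; thus $\mathrm{Supp}\,V\subseteq Hg$. Consequently the subgroup generated by $\mathrm{Supp}\,(V\rtimes L)$ is generated by $H$ together with $g$, and it suffices to prove: (C) for $i\ne j$, every element of $\mathrm{Supp}\,L_i$ commutes with every element of $\mathrm{Supp}\,L_j$ (so that $H$ is abelian, the individual supports being commutative), and (A) that $g$ commutes with every element of $\mathrm{Supp}\,L$. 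Throughout I would invoke the key observation recalled before Lemma \ref{cyclic}: if $c\cdot w\ne0$ for homogeneous $c\in L$ and $w\in V$, then $\deg c$ and $\deg w$ commute and $\deg(c\cdot w)=\deg c\,\deg w$.

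The crux is (C), and the main obstacle is precisely the situation in which Lemma \ref{cyclic} had nothing to prove: since $[L_i,L_j]=0$, no bracket relation between $a\in\mathrm{Supp}\,L_i$ and $b\in\mathrm{Supp}\,L_j$ is available directly. I would fix nonzero homogeneous $a'\in(L_i)_a$, $b'\in(L_j)_b$ and aim to produce a homogeneous $w\in V$ with $a'\cdot(b'\cdot w)\ne0$. As $a'$ and $b'$ act as commuting operators, this also gives $b'\cdot(a'\cdot w)\ne0$, whence $a'\cdot w$, $b'\cdot w$ and $a'\cdot(b'\cdot w)$ are all nonzero. The degree observation then shows that $b$ commutes with $d:=\deg w$ (from $b'\cdot w\ne0$) and with $\deg(a'\cdot w)=a\,d$ (from $b'\cdot(a'\cdot w)\ne0$); so $bad=b(ad)=(ad)b=adb=a(db)=a(bd)=abd$, and cancelling $d$ yields $ba=ab$.

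It remains to guarantee such a $w$, i.e. that the operator $\rho(a')\rho(b')$ (where $\rho(c)$ denotes $w\mapsto c\cdot w$) is nonzero on $V$; this is where graded-simplicity is essential, as two nonzero commuting operators could otherwise have zero product. Suppose $\rho(a')\rho(b')=0$, so $\mathrm{Im}\,\rho(b')\subseteq\ker\rho(a')$. Using $[L_m,a']=0$ for $m\ne i$ one checks that $\ker\rho(a')$ is stable under every $L_m$ with $m\ne i$, in particular under $L_j$; dually, $\mathrm{Im}\,\rho(b')$ is stable under every $L_m$ with $m\ne j$. Since $\mathrm{Im}\,\rho(b')$ is a nonzero graded subspace and $V$ is graded-simple, $V=U(L)\cdot\mathrm{Im}\,\rho(b')$; but by the Poincar\'e--Birkhoff--Witt theorem and the stability just noted this equals $U(L_j)\cdot\mathrm{Im}\,\rho(b')$, which lies in $\ker\rho(a')$ because the latter is $L_j$-stable. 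Hence $\rho(a')=0$, contradicting faithfulness of $V$ over $L_i$. This proves (C), and therefore $H$ is abelian.

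Finally, for (A), given $a\in\mathrm{Supp}\,L_i$ I would use faithfulness over $L_i$ to pick a homogeneous $w$ with $a'\cdot w\ne0$; then $a$ commutes with $\deg w$, which lies in $Hg$, say $\deg w=hg$ with $h\in H$. Since $H$ is now known to be abelian and $a,h\in H$, rewriting $a(hg)=(hg)a$ as $h(ag)=h(ga)$ and cancelling $h$ gives $ag=ga$. Assembling (A) and (C): $\mathrm{Supp}\,(V\rtimes L)\subseteq H\cup Hg$, with $H$ abelian and $g$ commuting with all of $H$, so a routine computation shows that any two elements of $H\cup Hg$ commute, completing the proof.
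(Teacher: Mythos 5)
Your proof is correct, but the decisive step is organized quite differently from the paper's. The paper first reduces to the case $[V,V]=0$, takes any nonzero homogeneous $v\in V$, forms the cyclic $L_i$-submodule $W_i=U(L_i)\cdot v$, checks (using graded-simplicity and the commuting actions) that $W_i$ is a faithful $L_i$-module, and applies Lemma \ref{cyclic} to the graded subalgebra $W_i\rtimes L_i$; this yields that the degree of \emph{every} nonzero homogeneous vector of $V$ commutes with all of $\mathrm{Supp}\,L_i$, and then for homogeneous $a\in L_i$ one picks $v$ with $a\cdot v\ne0$ and writes $\deg a=\deg(a\cdot v)(\deg v)^{-1}$ as a ratio of two degrees, each commuting with $\mathrm{Supp}\,L_j$. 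You instead attack the cross-commutativity head-on by proving that the composite operator $\rho(a')\rho(b')$ is nonzero --- your PBW/stability argument ($\mathrm{Im}\,\rho(b')$ is a nonzero graded subspace generating $V$, and would be absorbed into the $L_j$-stable subspace $\ker\rho(a')$ otherwise) is sound and is in fact the same blend of graded-simplicity, PBW and commuting actions that the paper hides inside the faithfulness of $W_i$ --- and then you extract $ab=ba$ by direct degree bookkeeping, finishing with a separate argument that $g=\deg v$ commutes with $H$. What the paper's route buys is brevity (Lemma \ref{cyclic} is reused as a black box and the ``ratio of degrees'' trick replaces your cancellation computations); what yours buys is that you never need the reduction to $[V,V]=0$ nor the formation of the subalgebras $W_i\rtimes L_i$, and you isolate the genuinely new content of this lemma over Lemma \ref{cyclic}, namely that graded-simplicity forces the joint non-degeneracy of the commuting actions. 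Both arguments are complete.
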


\begin{proof}
One checks that, if we redefine the bracket on the ideal $V$ to be zero while keeping the same bracket on the subalgebra $L$ and the same $L$-module structure on $V$, the resulting semidirect product is still $G$-graded, so we may suppose $[V,V]=0$.
Let $v$ be any non-zero homogeneous element of $V$ (hence a generator of $V$ as an $L$-module). Let $W_i$ be the $L_i$-submodule generated by $v$. Since the actions of $L_i$ and $L_j$ on $V$ commute with each other for all $j\ne i$, $W_i$ must be a faithful $L_i$-module, so we can apply Lemma \ref{cyclic} to the graded subalgebra $W_i\rtimes L_i$ and conclude that $\deg v$ commutes with the elements of $\mathrm{Supp}\,L_i$ for each $i$. It remains to prove that the elements of $\mathrm{Supp}\,L_i$ commute with the elements of $\mathrm{Supp}\,L_j$ for $j\ne i$. Let $a\ne 0$ be a homogeneous element of $L_i$. Pick a homogeneous $v\in V$ such that $v':=a\cdot v\ne0$ and denote $g=\deg v$ and $g'=\deg v'$. By the previous argument, both $g$ and $g'$ commute with every element of $\mathrm{Supp}\,L_j$. But this implies that $\deg a$ commutes with every element of $\mathrm{Supp}\,L_j$. 
\end{proof}
}

\begin{Thm}\label{supp_commutativity}
The support of any group grading on $U(\mathscr{F})_0$ over a field of charac\-te\-ristic $0$ generates an abelian subgroup.
\end{Thm}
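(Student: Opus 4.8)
The plan is to normalize the grading using Proposition \ref{diagonal} and then to place the entire support inside a single abelian subgroup by repeatedly invoking Lemmas \ref{cyclic} and \ref{cyclic2}. Throughout I rely on the principle recalled just before Lemma \ref{cyclic}: if $x_1,\dots,x_k$ are homogeneous and the iterated bracket $[\dots[x_1,x_2],\dots,x_k]$ is non-zero, then $\deg x_1,\dots,\deg x_k$ commute pairwise. Since $\mathrm{char}\,\mathbb{F}=0$, each $\mathfrak{s}_i\simeq\mathfrak{sl}_{n_i}$ is simple when $n_i>1$, hence graded-simple for any grading making it a graded subspace, so its support is commutative by the cited result for graded-simple Lie algebras. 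First I would pass to an isomorphic grading in the form provided by Proposition \ref{diagonal}: the subalgebras $\mathfrak{s}_k+\mathfrak{s}_{s-k+1}$ are graded, the block-diagonal contributes to the support only the degree $1_G$ (from the elements $e_k-e_{s-k+1}$) and an element $f$ with $f^2=1_G$ (from the elements $e_k+e_{s-k+1}$), and, as in the proof of Lemma \ref{can_Lie}, every block-sum $B_{ij}+B_{s-j+1,s-i+1}$ is graded.

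The heart of the argument is a linking step. For each pair $i<j$ with $n_i,n_j>1$, the block $B_{ij}$ is an irreducible and faithful module over $\mathfrak{s}_i\times\mathfrak{s}_j$ with commuting actions, and $[B_{ij},B_{ij}]=0$, so $B_{ij}\rtimes(\mathfrak{s}_i\times\mathfrak{s}_j)$ is a subalgebra. When it is graded, Lemma \ref{cyclic2} applies with $V=B_{ij}$ and $L=\mathfrak{s}_i\times\mathfrak{s}_j$ and yields that $\mathrm{Supp}\,\mathfrak{s}_i$, $\mathrm{Supp}\,\mathfrak{s}_j$ and $\mathrm{Supp}\,B_{ij}$ commute pairwise. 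Running over all pairs $i<j$ shows at once that the supports of all the $\mathfrak{s}_i$ commute with one another and that each $\mathrm{Supp}\,B_{ij}$ is absorbed; the remaining cross-relations between distinct blocks $B_{ij}$ and $B_{kl}$ follow from non-vanishing iterated brackets linking them through intermediate blocks, such as $[B_{ij},B_{jl}]\subset B_{il}$.

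To bring in the diagonal, I would note that $e_k+e_{s-k+1}$ acts as a non-zero scalar on a suitable block (so it is a faithful one-dimensional module factor there) and commutes with the relevant $\mathfrak{s}_i$; adjoining the factor $\mathbb{F}(e_k+e_{s-k+1})$ to $L$ and applying Lemma \ref{cyclic2} once more shows that $f$ commutes with the rest of the support. Together with the trivial degree $1_G$ of the elements $e_k-e_{s-k+1}$, this confines the whole support to a single abelian subgroup, which is what we want.

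The main obstacle is the flip symmetry. When $n_i=n_{s-i+1}$, the grading need not fix the individual simple ideal $\mathfrak{s}_i$ or the individual block $B_{ij}$; only the symmetric combinations $\mathfrak{s}_i+\mathfrak{s}_{s-i+1}$ and $B_{ij}+B_{s-j+1,s-i+1}$ are guaranteed graded. In that case I would run the linking step with these combinations in place of the individual pieces, using that $\mathfrak{s}_i+\mathfrak{s}_{s-i+1}$ is then graded-simple (so its support is still commutative) and that $B_{ij}+B_{s-j+1,s-i+1}$ is a faithful and graded-simple module over the appropriate product of such combinations, so that Lemma \ref{cyclic2} continues to apply. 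Checking faithfulness and graded-simplicity in each configuration of the flip, together with the low-rank exceptions where some $n_i=1$, is the technical core, but it parallels the case analysis already carried out in the proof of Proposition \ref{diagonal}.
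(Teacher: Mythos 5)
Your proposal follows essentially the same route as the paper's proof: normalize the grading via Proposition \ref{diagonal}, apply Lemmas \ref{cyclic} and \ref{cyclic2} to the semidirect products $B_{ij}\rtimes(\mathfrak{s}_i\times\mathfrak{s}_j)$ (or their flip-symmetrized versions $\tilde{B}_{ij}\rtimes(\tilde{\mathfrak{s}}_i\times\tilde{\mathfrak{s}}_j)$, with the extra factor $\mathbb{F}(e_i+e_{s-i+1})$ adjoined to absorb $f$, when $f\ne 1_G$), and then link the blocks of $J$ by non-vanishing iterated brackets. The one point to watch is your claim that $\mathfrak{s}_i+\mathfrak{s}_{s-i+1}$ is always graded-simple: this can fail when one (hence, by taking centralizers, both) of the two simple ideals is itself graded, but in that case commutativity of its support still follows from Lemma \ref{cyclic2} applied to $B_{i,s-i+1}\rtimes(\mathfrak{s}_i\times\mathfrak{s}_{s-i+1})$, exactly as in your linking step, so the argument goes through as the paper's does.
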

\begin{proof}
The result is known for simple Lie algebras, so we assume $s>1$.
We extend the grading to $U(\mathscr{F})^{(-)}$ and bring it to the form described in Proposition \ref{diagonal}. Then, as in the proof of Lemma \ref{can_Lie} just above, we can break $J$ into the direct sum of graded subspaces of the form $B_{ij}\oplus B_{s-j+1,s-i+1}$ ($i+j\ne s+1$) or $B_{ij}$ ($i+j=s+1$), for all $1\le i<j\le s$. Also, $\tilde{\mathfrak{s}}_i:=\mathfrak{s}_i+\mathfrak{s}_{s-i+1}$ are graded subalgebras (possibly zero). Note that any non-zero $\tilde{\mathfrak{s}}_i$ is graded-simple and, therefore, its support is commutative, except in the following situation: $i\ne\frac{s+1}{2}$ and one of the ideals $\mathfrak{s}_i$ and $\mathfrak{s}_{s-i+1}$ is graded. In this case, the other ideal is graded, too, being the centralizer of the first in $\tilde{\mathfrak{s}}_i$, and we can apply Lemma \ref{cyclic2} to the graded algebra $B_{i,s-i+1}\oplus\tilde{\mathfrak{s}}_i\simeq B_{i,s-i+1}\rtimes(\mathfrak{s}_i\times\mathfrak{s}_{s-i+1})$ to conclude that the support of $\tilde{\mathfrak{s}}_i$ is still commutative. Moreover, its elements commute with those of $\mathrm{Supp}\,B_{i,s-i+1}$, so we are done in the case $s=2$. From now on, assume $s>2$. {Let $f$ be the element of $G$ as in Proposition \ref{diagonal}.}

\textbf{Case 1}: $f=1_G$. 

Here each block $B_{ij}$ and each subalgebra $\mathfrak{s}_i$ is graded. Indeed, each element $e_i$ is semihomogeneous of degree $1_G$. If $i+j=s+1$, then we already know that $B_{ij}$ is graded, and otherwise $B_{ij}=\mathrm{ad}(e_i)(B_{ij}\oplus B_{s-j+1,s-i+1})$, so it is still graded. For $\tilde{\mathfrak{s}}_i$, it is sufficient to consider $i\le\frac{s+1}{2}$. If $i=\frac{s+1}{2}$, then we already know that $\mathfrak{s}_i$ is graded, and otherwise we can find $j>i$ such that $j\ne s-i+1$, which implies that $\mathfrak{s}_i=\mathrm{C}_{\tilde{\mathfrak{s}}_i}(B_{ij})$ is still graded. 

Applying Lemma \ref{cyclic2} to $B_{ij}\rtimes (\mathfrak{s}_i\times\mathfrak{s}_j)$, we conclude that the supports of non-zero $\mathfrak{s}_i$ and $\mathfrak{s}_j$ commute element-wise with one another and also with $\mathrm{Supp}\,B_{ij}$. (This works even if one of $\mathfrak{s}_i$ and $\mathfrak{s}_j$ is zero.) It follows that $\mathrm{Supp}\,S$ generates an abelian subgroup $H$ in $G$. It also commutes element-wise with $\mathrm{Supp}\,J$. Indeed, since $\mathrm{Supp}\,B_{ij}$ is contained in a coset of $H$, it is sufficient to prove that the degree of one non-zero homogeneous element of $B_{ij}$ commutes with the elements of $\mathrm{Supp}\,\mathfrak{s}_k$. We already know this if $k=i$ or $k=j$. Otherwise, we will have $k<i<j$, $i<k<j$ or $i<j<k$. In the last case, we have $[B_{ij},B_{jk}]=B_{ik}$, so we can find homogeneous elements $x\in B_{ij}$ and $y\in B_{jk}$ such that $0\ne [x,y]\in B_{ik}$. Since the elements of $\mathrm{Supp}\,\mathfrak{s}_k$ commute with $\deg y$ and with $\deg x\deg y$, they must commute with $\deg x$ as well. The other two cases are treated similarly. 

It remains to prove that $\mathrm{Supp}\,J$ is commutative. Since $J_1$ generates $J$ as a Lie algebra, it is sufficient to prove that, for any $1\le i\le j\le s-1$, the sets $\mathrm{Supp}\,B_{i,i+1}$ and $\mathrm{Supp}\,B_{j,j+1}$ commute with one another element-wise. But we can find homogeneous elements $x_1\in B_{12},\,x_2\in B_{23},\ldots,x_{s-1}\in B_{s-1,s}$ such that $[\ldots[x_1,x_2],\ldots,x_{s-1}]\ne 0$, so the degrees of $x_1,x_2,\ldots,x_{s-1}$ must commute pair-wise. The coset argument completes the proof of Case~1.

\textbf{Case 2}: $f\ne 1_G$. 

Here we work with $\tilde{B}_{ij}:=B_{ij}+B_{s-j+1,s-i+1}$. If $\tilde{\mathfrak{s}}_i$ and $\tilde{\mathfrak{s}}_j$ are distinct (that is, $i+j\ne s+1$) and non-zero, then $\tilde{B}_{ij}$ is a direct sum of two non-isomorphic simple $(\tilde{\mathfrak{s}}_i\times\tilde{\mathfrak{s}}_j)$-submodules. We claim that it is a graded-simple $(\tilde{\mathfrak{s}}_i\times\tilde{\mathfrak{s}}_j)$-module. Indeed, otherwise one of the submodules $B_{ij}$ and $B_{s-j+1,s-i+1}$ would be graded. But there exist scalars $\lambda_i$ such that $\tilde{e}_i:=e_i+e_{s-i+1}+\lambda_i {1}$ are homogeneous of degree $f$, and $\mathrm{ad}(\tilde{e}_i)$ acts as the identity on $B_{ij}$ and the negative identity on $B_{s-j+1,s-i+1}$, which forces $f=1_G$, a contradiction.

Therefore, we can apply Lemma \ref{cyclic2} to $\tilde{B}_{ij}\rtimes (\tilde{\mathfrak{s}}_i\times\tilde{\mathfrak{s}}_j)$ and conclude that the supports of non-zero $\tilde{\mathfrak{s}}_i$ and $\tilde{\mathfrak{s}}_j$ commute element-wise with one another, hence $\mathrm{Supp}\,S$ is commutative. 

Now consider $\tilde{B}_{ij}$, with $i+j\ne s+1$, as an $((\tilde{\mathfrak{s}}_i\times\tilde{\mathfrak{s}}_j)\times\mathbb{F}\tilde{e}_i)$-module, where one of $\tilde{\mathfrak{s}}_i$ and $\tilde{\mathfrak{s}}_j$ is allowed to be zero. The simple submodules $B_{ij}$ and $B_{s-j+1,s-i+1}$ are non-isomorphic, because they are distinguished by the action of $\tilde{e}_i$. Hence, our argument in the first paragraph shows that $\tilde{B}_{ij}$ is a graded-simple module, so we can apply Lemma \ref{cyclic2} to $\tilde{B}_{ij}\rtimes((\tilde{\mathfrak{s}}_i\times\tilde{\mathfrak{s}}_j)\times\mathbb{F}\tilde{e}_i)$ and conclude that the supports of $\tilde{\mathfrak{s}}_i$ and $\tilde{\mathfrak{s}}_j$ commute element-wise with $f$ and also with $\mathrm{Supp}\,\tilde{B}_{ij}$. Moreover, $f$ commutes with $\mathrm{Supp}\,\tilde{B}_{ij}$. If $i+j=s+1$, then $\tilde{B}_{ij}=B_{ij}$ and we can apply Lemma \ref{cyclic} to $B_{ij}\rtimes\tilde{\mathfrak{s}}_i$. 

Therefore, the elements of $\mathrm{Supp}\,S$ commute with $f$ and together generate an abelian subgroup $H$ in $G$. Then, by the same argument as in Case~1 (but using $\tilde{B}_{ij}$ instead of $B_{ij}$), we show that $\mathrm{Supp}\,S$  commutes element-wise with $\mathrm{Supp}\,J$. 
In order to prove that $f$ commutes with $\mathrm{Supp}\,J$, it is sufficient to consider $J_1$. As we have seen, $f$ commutes with $\mathrm{Supp}\,\tilde{B}_{ij}$ where $i+j\ne s+1$. The only case that is not covered in $J_1$ is $\tilde{B}_{s/2,s/2+1}=B_{s/2,s/2+1}$ for even $s$. Since $s>2$, we have $[\tilde{B}_{s/2-1,s/2},B_{s/2,s/2+1}]=\tilde{B}_{s/2-1,s/2+1}$. Since $f$ commutes with $\mathrm{Supp}\,\tilde{B}_{s/2-1,s/2}$ and with  $\mathrm{Supp}\,\tilde{B}_{s/2-1,s/2+1}$, we conclude that $f$ commutes with $\mathrm{Supp}\,B_{s/2,s/2+1}$ as well. The commutativity of $\mathrm{Supp}\,J$ is proved by the same argument as in Case~1.
\end{proof}

\section{Jordan case}\label{Jord_case}
Every Jordan isomorphism from the algebra $U(\mathscr{F})$, $s>1$, to an arbitrary associative algebra $R$ is either an associative isomorphism or anti-isomorphism \cite[Corollary 3.3]{BDW2016}. By the remark after Theorem \ref{aut_cecil}, $U(\mathscr{F})$ admits an anti-automorphism if and only if $n_{i}=n_{s-i+1}$ for all $i$. So, taking into account the structure of the automorphism group of $U(\mathscr{F})$ (see Lemma \ref{aut}), we obtain that the automorphism group of $U(\mathscr{F})^{(+)}$, that is, the algebra $U(\mathscr{F})$ viewed as a Jordan algebra with respect to the symmetrized product $x\circ y=xy+yx$, is either $\{\mathrm{Int}(x)\mid x\in U(\mathscr{F})^\times\}$ or $\{\mathrm{Int}(x)\mid x\in U(\mathscr{F})^\times\}\rtimes\langle\tau\rangle$. In both cases, the following holds:

\begin{Lemma}
{If $n>2$,} $\mathrm{Aut}(U(\mathscr{F})^{(+)})\simeq\mathrm{Aut}(U(\mathscr{F})_0)$.\qed 
\end{Lemma}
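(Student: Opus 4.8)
The plan is to read off both automorphism groups from the descriptions that have already been assembled and to match them case by case. By Lemma~\ref{aut_Lie} and the discussion immediately preceding this lemma, both $\mathrm{Aut}(U(\mathscr{F})^{(+)})$ and $\mathrm{Aut}(U(\mathscr{F})_0)$ split into the \emph{same} two cases, governed by whether $U(\mathscr{F})$ admits an anti-automorphism, i.e. whether $n_i=n_{s-i+1}$ for all $i$. First I would dispose of the case where this condition fails: then both groups coincide with the inner automorphism group $\mathrm{Inn}=\{\mathrm{Int}(x)\mid x\in U(\mathscr{F})^\times\}$, and there is nothing further to do, since both are the same quotient $U(\mathscr{F})^\times/\mathbb{F}^\times{1}$ (the kernel of $\mathrm{Int}$ is the center $\mathbb{F}^\times{1}$ in either realization, as $U(\mathscr{F})_0+\mathbb{F}{1}=U(\mathscr{F})$). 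So the substance lies in the remaining case $n_i=n_{s-i+1}$ for all $i$, with $n>2$, where
\[
\mathrm{Aut}(U(\mathscr{F})^{(+)})=\mathrm{Inn}\rtimes\langle\tau\rangle
\quad\text{and}\quad
\mathrm{Aut}(U(\mathscr{F})_0)=\mathrm{Inn}\rtimes\langle-\tau\rangle.
\]

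Next I would introduce the candidate isomorphism $\Theta$, defined on coset representatives by $\Theta(\mathrm{Int}(x)\tau^\epsilon)=\mathrm{Int}(x)(-\tau)^\epsilon$ for $\epsilon\in\{0,1\}$. Well-definedness is exactly where $n>2$ enters: by Lemma~\ref{aut_Lie}, $-\tau$ is not inner in this range (and correspondingly $\tau$ is not an inner Jordan automorphism), so every element of either group is uniquely of the displayed form, while $\mathrm{Int}(x)=\mathrm{Int}(x')$ forces $x,x'$ to differ by a central scalar and hence yields the same image under $\Theta$. Since $\tau^2=\mathrm{id}=(-\tau)^2$, both complements $\langle\tau\rangle$ and $\langle-\tau\rangle$ are cyclic of order $2$, so the two groups are semidirect products of the same normal subgroup $\mathrm{Inn}$ by $\mathbb{Z}/2$.

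The heart of the argument is to verify that $\Theta$ is a homomorphism, which amounts to checking that these two semidirect products are built from identical data, i.e. that conjugation by $\tau$ and conjugation by $-\tau$ induce the same automorphism of $\mathrm{Inn}$. This is the clean point: writing $-\tau=(-\mathrm{id})\circ\tau$ and using that the scalar operator $-\mathrm{id}$ is central among invertible linear maps, the two central factors cancel in the conjugation, so that
\[
(-\tau)\,\mathrm{Int}(x)\,(-\tau)^{-1}=\tau\,\mathrm{Int}(x)\,\tau^{-1}=\mathrm{Int}(\tau(x))^{-1}
\]
for all $x\in U(\mathscr{F})^\times$, where the last equality uses that $\tau$ is an anti-automorphism of the associative algebra $U(\mathscr{F})$. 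With the conjugation actions identified, a direct comparison of a product $\mathrm{Int}(x)\tau^\epsilon\cdot\mathrm{Int}(y)\tau^\delta$ with the product of the images shows that $\Theta$ respects multiplication, and $\Theta$ is manifestly bijective.

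I do not anticipate a genuine obstacle: once the two case distinctions are aligned and the cancellation of the central scalar $-\mathrm{id}$ is observed, the semidirect products are literally the same group. The only point demanding care is the genuineness of these semidirect products, and this is precisely the reason for the hypothesis $n>2$: for $n=2$ the element $-\tau$ collapses into $\mathrm{Inn}$ (as recorded in the proof of Lemma~\ref{aut_Lie}), so the coset bookkeeping underlying $\Theta$ would no longer be available.
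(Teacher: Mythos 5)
Your proposal is correct and follows the same route the paper intends: the paper leaves this lemma as an immediate consequence of the preceding discussion, which identifies $\mathrm{Aut}(U(\mathscr{F})^{(+)})$ as $\mathrm{Inn}$ or $\mathrm{Inn}\rtimes\langle\tau\rangle$ and, via Lemma \ref{aut_Lie}, $\mathrm{Aut}(U(\mathscr{F})_0)$ as $\mathrm{Inn}$ or $\mathrm{Inn}\rtimes\langle-\tau\rangle$ in exactly the same cases. Your explicit map $\Theta$ and the observation that $-\mathrm{id}$ cancels in the conjugation action merely make the implicit identification of the two semidirect products precise.
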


Hence, if $\mathbb{F}$ is algebraically closed of characteristic $0$ and the grading group $G$ is abelian, then the classification of $G$-gradings on the Jordan algebra $U(\mathscr{F})^{(+)}$ is equivalent to the classification of $G$-gradings on the Lie algebra $U(\mathscr{F})_0$ (see also \cite[\S 5.6]{EK2013} for the simple case, $s=1$). Thus, we get the same parametrization of the isomorphism classes of gradings as in Corollary \ref{cor:main_Lie}. The only difference is the sign in the construction of Type II gradings on $M_n^{(+)}$ (compare with Equation \eqref{Type2Lie} and recall that $\Phi$ is given by Equation \eqref{Phi}):
\begin{equation*}
M_n^{(+)}=\bigoplus_{g\in G}R_g\text{ where }R_g=\{X\in R_{\bar{g}}\mid\Phi^{-1}X^\tau\Phi=\chi(g)X\},
\end{equation*}
which are then restricted to $U(\mathscr{F})^{(+)}$. {Hence, for $n>2$, an explicit bijection between the $G$-gradings (or their isomorphism classes) on $U(\mathscr{F})^{(+)}$ and those on $U(\mathscr{F})_0$ is the following: restriction for Type I gradings and restriction with shift by the distinguished element $f$ for Type II gradings (which occur on $U(\mathscr{F})^{(+)}$ even for $n=2$, but in this case restrict to Type I gradings on $U(\mathscr{F})_0$).}

We note, however, that this result does not exclude the existence of group gradings on $U(\mathscr{F})^{(+)}$ with non-commutative support. In view of Theorem \ref{supp_commutativity}, these gradings, if they exist, are not analogous to gradings on $U(\mathscr{F})_0$.

\section{Isomorphism and practical isomorphism of graded Lie algebras}\label{practical_iso}
We use the main result of this section to obtain a classification of group gradings for $U(\mathscr{F})^{(-)}$
from the classification for $U(\mathscr{F})_0$, but it is completely general and may be of independent interest. 
Let $G$ be a group and let $L_1$ and $L_2$ be two $G$-graded Lie algebras over an arbitrary field $\mathbb{F}$.

\begin{Def}[{\cite[Definition 7]{pkfy2017}}]
$L_1$ and $L_2$ are said to be \emph{practically $G$-graded isomorphic} if there exists an isomorphism of (ungraded) algebras $\psi:L_1\to L_2$ that induces a $G$-graded isomorphism $L_1/\mathfrak{z}(L_1)\to L_2/\mathfrak{z}(L_2)$.
\end{Def}

Note that, in this case, for every homogeneous non-central $x\in L_1$, we can find $z\in\mathfrak{z}(L_1)$ such that $y=\psi(x+z)$ is homogeneous in $L_2$ and $\deg x=\deg y$.

Clearly, if $L_1$ and $L_2$ are $G$-graded isomorphic then they are practically $G$-graded isomorphic. 
The converse does not hold, but if $L_1$ and $L_2$ are practically $G$-graded isomorphic 
then the derived algebras $L_1'$ and $L_2'$ are $G$-graded isomorphic. More precisely:

\begin{Lemma}\label{iso_derived}
Assume $\psi:L_1\to L_2$ is an isomorphism of algebras that induces a $G$-graded isomorphism 
$L_1/\mathfrak{z}(L_1)\to L_2/\mathfrak{z}(L_2)$. 
Then $\psi$ restricts to a $G$-graded isomorphism $L_1'\to L_2'$.
\end{Lemma}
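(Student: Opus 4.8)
The plan is to verify directly that the algebra isomorphism $\psi$, restricted to $L_1'=[L_1,L_1]$, is both surjective onto $L_2'$ and degree-preserving. Since $\psi$ is an isomorphism of Lie algebras, the identity $\psi([x,y])=[\psi(x),\psi(y)]$ forces $\psi(L_1')=L_2'$, so the only genuine content is that this restriction respects the $G$-grading. First I would record that each derived algebra $L_i'$ is a graded subspace: it is spanned by brackets $[x,y]$ of homogeneous elements, and each such nonzero bracket is homogeneous of degree $\deg x\,\deg y$, so every homogeneous component of an element of $L_i'$ lies again in $L_i'$. In particular $L_i'=\bigoplus_g (L_i')_g$ with $(L_i')_g=L_i'\cap (L_i)_g$.

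The heart of the argument is to show that $\psi$ sends a homogeneous bracket to a homogeneous element of the same degree. Fix homogeneous $x,y\in L_1$ with $[x,y]\ne 0$; then neither $x$ nor $y$ is central, and $[x,y]$ is homogeneous of degree $\deg x\,\deg y$. By the remark following the definition of practical isomorphism, I can choose central elements $z_x,z_y\in\mathfrak{z}(L_1)$ so that $\psi(x+z_x)$ and $\psi(y+z_y)$ are homogeneous in $L_2$ of degrees $\deg x$ and $\deg y$, respectively. The key point is that $\psi(z_x),\psi(z_y)$ lie in $\mathfrak{z}(L_2)$, because an algebra isomorphism carries center to center, and hence they drop out of the bracket:
\[
\psi([x,y])=[\psi(x),\psi(y)]=[\psi(x+z_x),\psi(y+z_y)].
\]
The right-hand side is a bracket of two homogeneous elements, so it is homogeneous of degree $\deg x\,\deg y$; and it is nonzero because $\psi$ is injective and $[x,y]\ne0$. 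Thus $\psi([x,y])$ is homogeneous of degree $\deg[x,y]$.

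Finally I would assemble the conclusion. Since $(L_1')_g$ is spanned by brackets $[x,y]$ with $\deg x\,\deg y=g$, linearity gives $\psi\big((L_1')_g\big)\subset (L_2)_g\cap L_2'=(L_2')_g$ for every $g$. Combined with $\psi(L_1')=L_2'$ and the matching direct-sum decompositions, this yields $\psi\big((L_1')_g\big)=(L_2')_g$, so $\psi|_{L_1'}$ is a $G$-graded isomorphism. I expect the only delicate point to be the second paragraph: one must resist trying to make $\psi(x)$ itself homogeneous (which it need not be) and instead exploit that the required central corrections are annihilated by the bracket. This is precisely the mechanism that lets the grading on the quotient be transported back to the derived algebra.
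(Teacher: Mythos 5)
Your proof is correct and follows essentially the same route as the paper's: write elements of $L_1'$ as sums of brackets of homogeneous elements, correct each argument by a central element using the practical-isomorphism property, and observe that the central corrections vanish inside the bracket so that $\psi$ preserves the degree of each homogeneous bracket. The only difference is organizational (you argue bracket-by-bracket and then assemble by linearity, while the paper starts from a general homogeneous element of $L_1'$), and your explicit check that the arguments of a nonzero bracket are non-central is a point the paper leaves implicit.
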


\begin{proof}
Let $0\ne x\in L_1'$ be homogeneous of degree $g\in G$. Then there exist in $L_1$ nonzero homogeneous $x'_i$ of degree $g'_i$ and $x''_i$ of degree $g''_i$, $i=1,\ldots,m$, such that $x=\sum_{i=1}^m[x'_i,x''_i]$ and $g'_ig''_i=g$ for all $i$. 
Also, there exist $z'_i,z''_i\in\mathfrak{z}(L_1)$ such that $\psi(x'_i+z'_i)$ is homogeneous of degree $g'_i$ and $\psi(x''_i+z''_i)$ is homogeneous of degree $g''_i$, for all $i$. Hence,
\[
\psi(x)=\psi\left(\sum_{i=1}^m[x'_i+z'_i,x''_i+z''_i]\right)=\sum_{i=1}^m[\psi(x'_i+z'_i),\psi(x''_i+z''_i)]
\]
is homogeneous in $L_2$ of degree $g$, as desired. 
\end{proof}

Now we will see what happens if we strengthen the hypothesis on $\psi$ by assuming, in addition, that it restricts to a $G$-graded isomorphism $\mathfrak{z}(L_1)\to\mathfrak{z}(L_2)$. This does not yet imply that $\psi$ itself is a $G$-graded isomorphism, but we have the following:

\begin{Thm}\label{th:main_practical}
Let $L_1$ and $L_2$ be $G$-graded Lie algebras, and assume that there exists an isomorphism of (ungraded) algebras $\psi:L_1\to L_2$ such that both the induced map $L_1/\mathfrak{z}(L_1)\to L_2/\mathfrak{z}(L_2)$ and the restriction $\mathfrak{z}(L_1)\to\mathfrak{z}(L_2)$ are $G$-graded isomorphisms. Then $L_1$ and $L_2$ are isomorphic as $G$-graded algebras.
\end{Thm}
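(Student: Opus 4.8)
The plan is to correct $\psi$ by a central-valued linear map so as to make it degree-preserving while leaving the brackets untouched. Write $\mathfrak{z}_i=\mathfrak{z}(L_i)$ and recall that the center of a $G$-graded Lie algebra is always a graded subspace. The hypothesis that the induced map $L_1/\mathfrak{z}_1\to L_2/\mathfrak{z}_2$ is graded says precisely this: for homogeneous $x\in (L_1)_g$, the element $\psi(x)$ agrees modulo $\mathfrak{z}_2$ with some homogeneous element of degree $g$. Decomposing $\psi(x)=\sum_h\psi(x)_h$ into its homogeneous components in $L_2$, this is equivalent to saying that every component $\psi(x)_h$ with $h\ne g$ lies in $\mathfrak{z}_2$. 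Thus the failure of $\psi$ to preserve degree is entirely ``central.''

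This suggests defining a linear map $\delta:L_1\to\mathfrak{z}_2$ by $\delta(x)=\psi(x)-\psi(x)_g=\sum_{h\ne g}\psi(x)_h$ for $x\in (L_1)_g$, extended linearly, and then setting $\widetilde\psi:=\psi-\delta$. By construction $\widetilde\psi$ sends $(L_1)_g$ into $(L_2)_g$, so it is degree-preserving. First I would verify that $\widetilde\psi$ is an algebra homomorphism. Since $\delta$ takes values in $\mathfrak{z}_2$, the central terms drop out of every bracket, so $[\widetilde\psi(x),\widetilde\psi(y)]=[\psi(x),\psi(y)]=\psi([x,y])$ for all $x,y$; on the other hand $\widetilde\psi([x,y])=\psi([x,y])-\delta([x,y])$, so it remains to show that $\delta$ vanishes on the derived algebra $L_1'$. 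This is exactly where Lemma \ref{iso_derived} enters: it guarantees that $\psi$ restricts to a graded map $L_1'\to L_2'$, so for homogeneous $w\in L_1'\cap (L_1)_g$ the image $\psi(w)$ is already homogeneous of degree $g$, whence $\delta(w)=0$; as $L_1'$ is a graded subspace, $\delta$ vanishes on all of it.

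I expect this last verification — reconciling ``degree-preserving'' with ``bracket-preserving'' through the vanishing of $\delta$ on $L_1'$ — to be the crux of the argument, since it is the single point where the construction could break down and where the preparatory Lemma \ref{iso_derived} is indispensable.

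Finally I would establish bijectivity. The hypothesis that $\psi$ restricts to a graded isomorphism $\mathfrak{z}_1\to\mathfrak{z}_2$ shows that $\delta$ vanishes on $\mathfrak{z}_1$ too, so $\widetilde\psi$ agrees with $\psi$ on $\mathfrak{z}_1$ and hence restricts to the given graded isomorphism $\mathfrak{z}_1\to\mathfrak{z}_2$; in particular $\widetilde\psi(\mathfrak{z}_1)=\mathfrak{z}_2$. Moreover, since $\psi$ and $\widetilde\psi$ differ by a map into $\mathfrak{z}_2$, they induce the same map $L_1/\mathfrak{z}_1\to L_2/\mathfrak{z}_2$, which is a graded isomorphism by hypothesis. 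Applying the short five lemma to the short exact sequences $0\to\mathfrak{z}_i\to L_i\to L_i/\mathfrak{z}_i\to 0$ with vertical maps induced by $\widetilde\psi$ — the two outer maps being isomorphisms — shows that $\widetilde\psi$ itself is an isomorphism. Being degree-preserving, it is the desired $G$-graded isomorphism between $L_1$ and $L_2$.
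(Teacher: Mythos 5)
Your proof is correct and follows essentially the same strategy as the paper's: both correct $\psi$ by a linear map with image in $\mathfrak{z}(L_2)$ and kernel containing $L_1'$ (so that brackets are unaffected), with Lemma \ref{iso_derived} supplying exactly the vanishing on $L_1'$ needed to keep $\widetilde\psi$ a homomorphism. The only difference is cosmetic: you build the correction $\delta$ canonically from the homogeneous projections, whereas the paper constructs it by choosing a graded complement $N_1$ and a homogeneous basis $\{u_i\}$; your version avoids these choices but the underlying argument is the same.
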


\begin{proof}
Let $N_1\subset\mathfrak{z}(L_1)$ be a graded subspace such that
\[
\mathfrak{z}(L_1)=N_1\oplus(\mathfrak{z}(L_1)\cap L_1').
\]
By our hypothesis, $N_2:=\psi(N_1)$ is a graded subspace of $\mathfrak{z}(L_2)$. Since $L_1'\oplus N_1$ is a graded subspace of $L_1$, there exists a linearly independent set $\mathcal{B}_1=\{u_i\}_{i\in\mathscr{I}}$ of homogeneous element of $L_1$ satisfying 
\[
L_1=L_1'\oplus N_1\oplus\mathrm{Span}\,\mathcal{B}_1. 
\]
By our hypothesis, we can find $z_i\in\mathfrak{z}(L_1)$ such that $\psi(u_i+z_i)$ is a homogeneous element of $L_2$ that has the same degree as $u_i$. Since $\mathfrak{z}(L_1)\subset L_1'\oplus N_1$, the set $\mathcal{B}_2:=\{\psi(u_i+z_i)\}_{i\in\mathscr{I}}$ is linearly independent and satisfies 
\[
L_2=L_2'\oplus N_2\oplus\mathrm{Span}\,\mathcal{B}_2.
\]
Now define a linear map $\theta:L_1\to L_2$ by setting $\theta|_{L_1'\oplus N_1}=0$ and $\theta(u_i)=\psi(z_i)$ for all $i\in\mathscr{I}$. This is a ``trace-like map'' in the sense that its image is contained in $\mathfrak{z}(L_2)$ and its kernel contains $L_1'$. It follows that $\tilde\psi:=\psi+\theta$ is an isomorphism of algebras $L_1\to L_2$. Applying Lemma \ref{iso_derived}, we see that $\psi$, and hence $\tilde{\psi}$, restricts to a $G$-graded isomorphism $L_1'\oplus N_1\to L_2'\oplus N_2$. By construction, $\tilde\psi(u_i)=\psi(u_i+z_i)$. It follows that $\tilde\psi$ is an isomorphism of $G$-graded algebras. 
\end{proof}

\begin{Cor}\label{cor:main_practical}
Let $\Gamma_1$ and $\Gamma_2$ be two $G$-gradings on a Lie algebra $L$ and consider the $G$-graded algebras 
$L_1=(L,\Gamma_1)$ and $L_2=(L,\Gamma_2)$. If $L_1/\mathfrak{z}(L_1)=L_2/\mathfrak{z}(L_2)$ and $\mathfrak{z}(L_1)=\mathfrak{z}(L_2)$ as $G$-graded algebras, then $L_1\simeq L_2$ as $G$-graded algebras.
\end{Cor}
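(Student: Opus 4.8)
The plan is to obtain this statement as an immediate application of Theorem \ref{th:main_practical}, taking $\psi$ to be the identity map. The first thing I would observe is that $L_1$ and $L_2$ share the same underlying Lie algebra $L$: only the gradings $\Gamma_1$ and $\Gamma_2$ differ, while the bracket is the same. Since the center is defined purely in terms of the bracket, it is the same subspace in both cases, namely $\mathfrak{z}(L_1)=\mathfrak{z}(L)=\mathfrak{z}(L_2)$ as (ungraded) subspaces. Consequently the identity map $\psi=\mathrm{id}_L:L_1\to L_2$ is an isomorphism of ungraded algebras that carries $\mathfrak{z}(L_1)$ onto $\mathfrak{z}(L_2)$ and hence induces the identity map on the quotient $L/\mathfrak{z}(L)$.

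It then remains to check that this $\psi$ satisfies the two graded hypotheses of Theorem \ref{th:main_practical}. The restriction of $\psi$ to $\mathfrak{z}(L_1)=\mathfrak{z}(L_2)$ is the identity, which is a $G$-graded isomorphism precisely because the gradings that $\Gamma_1$ and $\Gamma_2$ induce on the center coincide; this is exactly the assumption $\mathfrak{z}(L_1)=\mathfrak{z}(L_2)$ as $G$-graded algebras. Likewise, the map induced by $\psi$ on $L_1/\mathfrak{z}(L_1)\to L_2/\mathfrak{z}(L_2)$ is the identity on $L/\mathfrak{z}(L)$, and it is a $G$-graded isomorphism because the induced gradings on this quotient agree, which is the assumption $L_1/\mathfrak{z}(L_1)=L_2/\mathfrak{z}(L_2)$ as $G$-graded algebras. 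With both hypotheses verified, Theorem \ref{th:main_practical} yields a $G$-graded isomorphism $L_1\simeq L_2$, as required.

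The argument is essentially formal once one notes that the corollary's hypotheses are literally the two graded conditions demanded by the theorem. The only point needing care is the (easy) observation that the center, the quotient, and all the comparison maps are the identity at the level of underlying vector spaces, so that ``equality as $G$-graded algebras'' translates directly into ``the identity is a graded isomorphism.'' I therefore do not anticipate any genuine obstacle: the substance of the result resides entirely in Theorem \ref{th:main_practical}, and this corollary merely specializes it to the situation $L_1=L_2=L$ through the choice $\psi=\mathrm{id}_L$.
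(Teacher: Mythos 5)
Your proposal is correct and coincides exactly with the paper's proof, which simply applies Theorem \ref{th:main_practical} with $\psi$ the identity map; you have merely spelled out the (easy) verification that the hypotheses of the theorem translate into the corollary's assumptions.
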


\begin{proof}
Apply the previous theorem with $\psi$ being the identity map.
\end{proof}

\begin{Cor}\label{reduction_to_trace_0}
Let $\Gamma_1$ and $\Gamma_2$ be two $G$-gradings on $U(\mathscr{F})^{(-)}$ {and assume $\mathrm{char}\,\mathbb{F}\nmid n$.} Then $\Gamma_1$ and $\Gamma_2$ are isomorphic if and only if they assign the same degree to the identity matrix ${1}$ and induce isomorphic gradings on $U(\mathscr{F})^{(-)}/\mathbb{F} {1}\simeq U(\mathscr{F})_0$. 
\end{Cor}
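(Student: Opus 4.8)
Corollary \ref{reduction_to_trace_0} asserts that two $G$-gradings on $U(\mathscr{F})^{(-)}$ are isomorphic precisely when they agree on $\mathbf{1}$ and induce isomorphic gradings on the quotient $U(\mathscr{F})^{(-)}/\mathbb{F}\mathbf{1}$. Let me plan how to prove this.

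The plan is to reduce the whole statement to Corollary \ref{cor:main_practical} by correctly lifting a graded isomorphism of quotients to an ungraded automorphism of $U(\mathscr{F})^{(-)}$. Write $L=U(\mathscr{F})^{(-)}$, so that $\mathfrak{z}(L)=\mathbb{F}{1}$ and, since $\mathrm{char}\,\mathbb{F}\nmid n$, we have $L=U(\mathscr{F})_0\oplus\mathbb{F}{1}$ and $Q:=L/\mathfrak{z}(L)\simeq U(\mathscr{F})_0$. The forward implication is immediate: if $\phi$ is a $G$-graded isomorphism between $(L,\Gamma_1)$ and $(L,\Gamma_2)$, then it preserves the center $\mathfrak{z}(L)=\mathbb{F}{1}$ degree by degree, and since $\phi({1})$ is a nonzero scalar multiple of ${1}$, the two gradings assign ${1}$ the same degree; moreover $\phi$ descends to a $G$-graded isomorphism between the induced gradings on $Q$, so those are isomorphic.

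The substance is the converse. Assume $\Gamma_1$ and $\Gamma_2$ assign ${1}$ the same degree and that the induced gradings $\bar\Gamma_1,\bar\Gamma_2$ on $Q\simeq U(\mathscr{F})_0$ are isomorphic via a $G$-graded isomorphism $\bar\phi$. Forgetting gradings, $\bar\phi$ is an automorphism of $U(\mathscr{F})_0$; by Lemma \ref{aut_Lie} it has the form $\mathrm{Int}(x)$ or $-\mathrm{Int}(x)\tau$, and each of these is the restriction to $U(\mathscr{F})_0$ of an automorphism of $L$ (namely $\mathrm{Int}(x)$, or $\mathrm{Int}(x)$ composed with $-\tau$, both of which map $U(\mathscr{F})_0$ to itself and send $\mathfrak{z}(L)$ to itself). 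Hence I can lift $\bar\phi$ to an ungraded automorphism $\phi$ of $L$ that induces $\bar\phi$ on $Q$.

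Now transport $\Gamma_1$ through $\phi$ to obtain an isomorphic grading $\Gamma_1':=\phi(\Gamma_1)$ on $L$. The grading that $\Gamma_1'$ induces on $Q$ is the transport of $\bar\Gamma_1$ by $\bar\phi$, which is exactly $\bar\Gamma_2$; thus $\Gamma_1'$ and $\Gamma_2$ induce the \emph{same} grading on $L/\mathfrak{z}(L)$. On the center, since $\phi({1})$ is a scalar multiple of ${1}$, the degree of ${1}$ in $\Gamma_1'$ equals its degree in $\Gamma_1$, which by hypothesis equals its degree in $\Gamma_2$; hence $\Gamma_1'$ and $\Gamma_2$ induce the same grading on $\mathfrak{z}(L)=\mathbb{F}{1}$ as well. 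Corollary \ref{cor:main_practical}, applied to the pair $\Gamma_1',\Gamma_2$, now yields $(L,\Gamma_1')\simeq(L,\Gamma_2)$ as $G$-graded algebras, and composing with the graded isomorphism $(L,\Gamma_1)\simeq(L,\Gamma_1')$ completes the argument.

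The step requiring the most care is the lifting: one must ensure that the \emph{ungraded} automorphism underlying $\bar\phi$ genuinely extends to an automorphism of $L$ inducing precisely $\bar\phi$ on the quotient, which is where the explicit description of $\mathrm{Aut}(U(\mathscr{F})_0)$ from Lemma \ref{aut_Lie} (ultimately Theorem \ref{aut_cecil}) is essential. Coupled with this is the bookkeeping needed to upgrade ``isomorphic induced gradings'' to ``equal induced gradings'' on both $L/\mathfrak{z}(L)$ and $\mathfrak{z}(L)$, since Corollary \ref{cor:main_practical} demands equality rather than mere isomorphism.
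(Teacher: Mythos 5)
Your argument is correct and is essentially the paper's own proof: both directions reduce to Theorem \ref{th:main_practical} (equivalently, Corollary \ref{cor:main_practical} after transporting $\Gamma_1$ by the lifted automorphism), using the decomposition $U(\mathscr{F})^{(-)}=U(\mathscr{F})_0\oplus\mathbb{F}{1}$ to lift the quotient isomorphism. The only difference is that you invoke Lemma \ref{aut_Lie} to justify the lifting, which is unnecessary and quietly narrows the generality (that lemma needs $\mathbb{F}$ algebraically closed of characteristic $0$, whereas the corollary assumes only $\mathrm{char}\,\mathbb{F}\nmid n$): since $\mathbb{F}{1}$ is a central direct complement of $U(\mathscr{F})_0$, any automorphism of $U(\mathscr{F})_0$ extends to $U(\mathscr{F})^{(-)}$ simply by setting $\psi({1})={1}$, which is all the paper does.
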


\begin{proof}
The ``only if'' part is clear. For the ``if'' part, take an automorphism $\psi_0$ of $U(\mathscr{F})_0$ that sends 
the grading induced by $\Gamma_1$ to the one induced by $\Gamma_2$, extend $\psi_0$ to an automorphism $\psi$ of $U(\mathscr{F})^{(-)}=U(\mathscr{F})_0\oplus\mathbb{F} {1}$ by setting $\psi({1})={1}$, and apply the theorem.
\end{proof}

\end{document}